

\documentclass[12pt]{amsart}
\usepackage{amsthm}
\usepackage{amsmath,amssymb,euscript,mathrsfs,bm}

\usepackage{pstricks}

\renewcommand{\part}[1]{\vspace{2em}\stepcounter{part} \begin{center}{\large\scshape Part~\Roman{part}:\hspace{5pt}  #1}\end{center}\vspace{0.5em}\addcontentsline{toc}{part}{Part~\Roman{part}: #1}}

\usepackage{eepic,bbm}

\usepackage{bbold}
\usepackage{dsfont}

\usepackage{ebgaramond}



\usepackage{setspace}
\setstretch{1.2}

\usepackage[shortlabels]{enumitem}

\usepackage{tikz-cd}

\usepackage[colorlinks=true,linkcolor=black,citecolor=black,urlcolor=black]{hyperref}

%

\pushQED{\qed}

\oddsidemargin=.3in
\evensidemargin=.3in
\textwidth=5.9in


\psset{unit=1pt}
\psset{arrowsize=4pt 1}
\psset{linewidth=.5pt}

\countdef\x=23
\countdef\y=24
\countdef\z=25
\countdef\t=26

\def\graybox(#1,#2){
\x=#1 \y=#2 
\z=\x \t=\y
\advance\z by 10 
\advance\t by 10 
\psframe[fillstyle=solid,fillcolor=lightgray,linewidth=0pt](\x,\y)(\z,\t) 
\psline[linewidth=.5pt](\x,\y)(\x,\t)(\z,\t)(\z,\y)(\x,\y)}

\newcommand{\define}{\textbf}

\newcommand{\excise}[1]{}

\newcommand{\ul}{\underline}

%
\newcommand{\C}{\mathds{C}}

\newcommand{\Z}{\mathds{Z}}
\renewcommand{\P}{\mathds{P}}

\newcommand{\bV}{\mathds{V}}

\newcommand{\bX}{\mathds{X}}

\newcommand{\B}{\mathds{B}}
\newcommand{\E}{\mathds{E}}

\renewcommand{\O}{\mathcal{O}}

\newcommand{\pt}{\mathrm{pt}}

\DeclareMathOperator{\Sym}{Sym}
\DeclareMathOperator{\codim}{codim}

\DeclareMathOperator{\gr}{gr}

\DeclareMathOperator{\Hom}{Hom}

\DeclareMathOperator{\Span}{Span}

\DeclareMathOperator{\ev}{ev}

\newcommand{\Sgp}{\mathcal{S}}        

\newcommand{\tto}{\twoheadrightarrow}

\newcommand{\bull}{ {\footnotesize{\ensuremath{\bullet}}}  }

\newcommand{\uw}{\underline{w}}    
\newcommand{\tX}{\tilde{X}}        
\newcommand{\tY}{\tilde{Y}}
\newcommand{\tZ}{\tilde{Z}}

\newcommand{\cX}{\mathcal{X}}
\newcommand{\cY}{\mathcal{Y}}
\newcommand{\cZ}{\mathcal{Z}}

\newcommand{\tcX}{\tilde{\mathcal{X}}}
\newcommand{\tcY}{\tilde{\mathcal{Y}}}
\newcommand{\tcZ}{\tilde{\mathcal{Z}}}

\newcommand{\bbe}{\mathbb{e}}     
\newcommand{\bbc}{\mathbb{c}}

\newcommand{\bbOmega}{\mathbb{\Omega}}

\newcommand{\Gr}{ {Gr} }
\newcommand{\Fl}{ {Fl} }



\newcommand{\thGr}{\overline{\mathrm{Gr}}} 
\newcommand{\thFl}{\overline{\mathrm{Fl}}} 


\newcommand{\HH}{\mathcal{H}} 
\newcommand{\KK}{\mathcal{K}} 

\newcommand{\triple}{{\bm\tau}}

\newcommand{\enS}{\mathrm{S}}              
\newcommand{\Sch}{\mathfrak{S}}        
\newcommand{\bsS}{\overleftarrow{\mathfrak{S}}}  

\newcommand{\enG}{\mathrm{G}}              
\newcommand{\Groth}{\mathfrak{G}}        
\newcommand{\bsG}{\overleftarrow{\mathfrak{G}}}  

\newcommand{\ee}{\mathrm{e}}    

\newcommand{\tS}{S}             
\newcommand{\tbS}{\mathds{S}}   

\newtheoremstyle{scthm}%
{}{}{\itshape}{}{\scshape}{.}{ }{}

\newtheoremstyle{scdef}%
{}{}{}{}{\scshape}{.}{ }{}

\theoremstyle{scthm}

\newtheorem{theorem}{Theorem}[section]
\newtheorem{lemma}[theorem]{Lemma}
\newtheorem{proposition}[theorem]{Proposition}
\newtheorem{corollary}[theorem]{Corollary}

\newtheorem*{thm*}{Theorem}
\newtheorem*{cor*}{Corollary}
\newtheorem*{prop*}{Proposition}

\newtheorem*{thmA}{Theorem~A}
\newtheorem*{thmB}{Theorem~B}

\theoremstyle{scdef}
\newtheorem{definition}[theorem]{Definition}
\newtheorem{remark}[theorem]{Remark}
\newtheorem{example}[theorem]{Example}

%

\begin{document}

\newcommand{\isom}{\cong}
\renewcommand{\setminus}{\smallsetminus}
\renewcommand{\phi}{\varphi}
\newcommand{\exterior}{\textstyle\bigwedge}
\renewcommand{\tilde}{\widetilde}
\renewcommand{\hat}{\widehat}
\renewcommand{\bar}{\overline}

\title[Equivariant positivity and coproduct coefficients]{Strong equivariant positivity for homogeneous varieties and back-stable coproduct coefficients}
\author{David Anderson}\thanks{Partially supported by NSF CAREER DMS-1945212.}

\date{February 23, 2023}
\address{Department of Mathematics, The Ohio State University, Columbus, OH 43210}
\email{anderson.2804@math.osu.edu}

\maketitle

\renewcommand{\bfseries}{\itshape}

\begin{abstract}
Using a transversality argument, we demonstrate the positivity of certain coefficients in the equivariant cohomology and K-theory of a generalized flag manifold.  This strengthens earlier equivariant positivity theorems \cite{graham,agm} by further constraining the roots which can appear in these coefficients.

As an application, we deduce that structure constants for comultiplication in the equivariant K-theory of an infinite flag manifold exhibit an unusual positivity property, establishing conjectures of Lam-Lee-Shimozono \cite{LLS3}.  Along the way, we present alternative formulas for the back stable Grothendieck polynomials defined in {\it op. cit.}, as well as a new method for computing the coproduct coefficients.
\end{abstract}



\section*{Introduction}

Given a subvariety $Y$ of a homogeneous variety $G/P$, one can expand its cohomology class in the basis of Schubert classes.  A very basic application of Kleiman's transversality theorem \cite{kleiman} says that the coefficients in this expansion are all nonnegative integers.  When $Y$ is torus-invariant, one can ask for the expansion of its equivariant cohomology class.  Graham's theorem \cite{graham} says the coefficients are now polynomials in $\Z_{\geq 0}[\beta_1,\ldots,\beta_r]$, where the $\beta_i$ are simple positive roots.  Similar statements hold in K-theory \cite{brion} and equivariant K-theory \cite{agm}, where now positivity means the coefficients alternate in sign.

This article has two main aims.  The first is to establish stronger equivariant positivity theorems in cohomology and K-theory; the second is to apply these strong positivity theorems to prove conjectures of Lam, Lee, and Shimozono concerning back-stable Grothendieck polynomials.  Correspondingly, the two parts of this article are aimed at slightly different audiences, and may be read independently---the first part provides the general framework and theorems which are applied in the second part; the second part provides a motivating application, as well as examples illustrating the positivity theorems and the necessity of some of their hypotheses.

Positivity theorems---equivariant or not---may be understood as providing bounds on the coefficients in question.  When a precise formula for the coefficient is either unknown or impractical, such bounds can be useful.  From this point of view, it is natural to ask if the bounds can be improved and to seek sharper bounds if possible.

For example, Graham's theorem says that in the Schubert expansion of the equivariant class of a $T$-invariant subvariety $Y\subset G/P$, the coefficients belong to the cone $\Z_{\geq 0}[\beta_1,\ldots,\beta_r]$.  If one knows something more about $Y$, does it follow that these coefficients lie in a smaller cone?

Our answer begins with a simple idea: one way to prove Graham's theorem is to use matrices from the Borel group $B$ to move $Y$ into transverse position with respect to opposite Schubert varieties \cite{and07}.  So if $Y$ is invariant for some subgroup $A \subset B$, then a smaller, residual subgroup $B'\subset B$ should suffice to make $Y$ transverse, and one should see positivity in the roots of $B'$.

In fact, this naive idea is neither completely accurate nor sufficient for our purposes.\footnote{When $A$ is a subgroup of the unipotent radical $U\subset B$ which is normalized by $T$, a careful analysis of Graham's proof shows that the Schubert expansion of $[Y]$ is positive in the weights of $T$ acting on $U/A$.  (See, e.g., \cite[\S19.4]{ecag}.)  An application of this observation, along with an example of its limitations, is given in \cite[\S8]{infschub}.}  Correcting it leads to the key technical innovation of this article: the notion of {\it $S$-factorization}.  Given a subtorus $S\subset T$, an $S$-factorization for $B$ consists of a pair $(A,B')$, consisting of a unipotent subgroup $A$ of $G$ and a subgroup $B'$ of $B$, satisfying some conditions which are explained in \S\ref{s.factor}.  A basis for the character group of $S$ is {\it sufficiently positive} if all characters of $S$ acting on $B'$ are nonnegative sums of the basis elements; the role of this condition is discussed in \S\ref{s.gp}.

\begin{thmA}
Let $S\subset T$ be a subtorus, fix an $S$-factorization $(A,B')$ for $B$, and choose a sufficiently positive basis $\{\beta_1,\ldots,\beta_r\}$ for $S$ with respect to $B'$.  Let $Y\subset G/P$ be an irreducible subvariety which is both $S$-invariant and $A$-invariant.

\begin{enumerate}[(i)]
\item Writing the class
\[
  [Y] = \sum_w c_w \,[X_w]
\]
in the Schubert basis of $H_S^*(G/P)$, we have $c_w \in \Z_{\geq 0}[\beta_1,\ldots,\beta_r]$. \label{A-part-1}

\medskip

\item Suppose furthermore that $Y$ has rational singularities, and let $\partial Y\subset Y$ be a Cohen-Macaulay divisor which supports an ample line bundle, and is also invariant for both $S$ and $A$.  Writing
\[
 [\O_Y(-\partial Y)] = \sum_w d_w \,\xi_w
\]
in the ideal-sheaf basis of $K_S(G/P)$, we have
\[
(-1)^{\dim Y-\ell(w)} d_w \in \Z_{\geq 0}[\ee^{-\beta_1}-1,\ldots,\ee^{-\beta_r}-1].
\] \label{A-part-2}
\end{enumerate}
\end{thmA}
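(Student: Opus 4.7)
My approach is to adapt the Bott--Samuelson/transversality proof of Graham's theorem \cite{graham} to a smaller moving group. Since $Y$ is $A$-invariant, one should only need to move $Y$ by the residual group $B'$ (rather than all of $B$) to achieve generic position with respect to opposite Schubert varieties; because $S$ acts on $B'$ through characters that are, by the sufficient-positivity hypothesis, nonnegative in the basis $\{\beta_1,\ldots,\beta_r\}$, every equivariant tangent contribution produced in the transversality argument will lie in $\Z_{\geq 0}[\beta_1,\ldots,\beta_r]$.

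Concretely, I would first build an $S$-equivariant resolution $\Gamma \to B'$ by ordering the $T$-weights on $\Lie(B')$ and assembling the corresponding root-type subgroups into an affine-space bundle. The tangent characters of $S$ at any point of $\Gamma$ are then nonnegative sums of the $\beta_i$. Form the mixing map
\[
  \phi \colon \Gamma \times Y \longrightarrow G/P,\qquad (\gamma,y)\mapsto \gamma\cdot y,
\]
which is $S$-equivariant for the diagonal action. Next I would establish the Kleiman-type transversality statement: for a general parameter in $\Gamma$, the fiber of $\phi$ over a point of a complementary-dimensional opposite Schubert cell is zero-dimensional and reduced. This uses the $S$-factorization hypothesis, together with $A$-invariance of $Y$, to ensure that $B\cdot Y = B'\cdot Y$ so that moving by $B'$ alone already attains the generic position which Graham's argument requires from moving by the full $B$.

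With transversality in hand, $c_w$ is obtained as an $S$-equivariant pushforward from $\Gamma \times Y$ along $\phi$, and the projection formula expresses it as a polynomial in the $S$-weights of the tangent directions to $\Gamma$; each such weight is a nonnegative sum of the $\beta_i$, proving part (i). For part (ii), I would replace cohomology by $S$-equivariant K-theory and the class $[Y]$ by $[\O_Y(-\partial Y)]$. The Cohen--Macaulay hypothesis on $\partial Y$ together with the ampleness of $\O_Y(\partial Y)$ should allow a Kawamata--Viehweg-type vanishing on a resolution, so that the derived pushforward of the ideal sheaf collapses to a single term. The sign $(-1)^{\dim Y - \ell(w)}$ then emerges from the Koszul resolution defining the ideal-sheaf class $\xi_w$, and each tangent-direction factor of the form $1-\ee^{-\chi}$, with $\chi$ a nonnegative sum of the $\beta_i$, expands with nonnegative integer coefficients in $\ee^{-\beta_1}-1,\ldots,\ee^{-\beta_r}-1$.

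\emph{Main obstacle.} The delicate step is the transversality/reduction argument: the footnote in the introduction warns that the naive ``move by $B'$'' idea is ``neither completely accurate nor sufficient,'' so the genuine content of the $S$-factorization hypothesis must be used to justify that a general $\Gamma$-translate of $Y$ meets opposite Schubert varieties properly, and to track $S$-weights (not merely $T$-weights) faithfully through the construction. For part (ii), the comparable difficulty is verifying the vanishing that upgrades a generic rational-coefficient statement to the sharp alternating-sign positivity: confirming that the hypotheses on $\partial Y$ suffice to kill all higher direct images of $\O_Y(-\partial Y)$ under an appropriate resolution, and that these are compatible with the Koszul-style resolution of $\xi_w$, is where the Cohen--Macaulay and ampleness assumptions must do essential work.
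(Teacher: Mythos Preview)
Your strategy has the right shape, but there is a genuine gap at the transversality step. The claim that $A$-invariance gives $B\cdot Y=B'\cdot Y$ is unjustified: in general $A\not\subset B$ (the definition only requires $A\subset G$ to be unipotent and normalized by $S$, with $A^-=A\cap B^-$ possibly nontrivial), so you cannot pass from $B$-orbits to $B'$-orbits. What the $S$-factorization actually provides is that $A\times B'_\circ\times A^-$ maps smoothly onto an open set of a parabolic $Q\supseteq B$; from this one deduces that the action map $m\colon A\times B'_\circ\times X^w\to X$ is flat, because it factors through the flat map $Q\times X^w\to X$. Pulling back along $Y\hookrightarrow X$ and using that $A$ acts \emph{freely} on the source while preserving $Y$, one strips off the $A$-factor, leaving $Z'\subset B'_\circ\times X^w$ of the expected dimension. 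The reduction to $B'$ thus goes through $Q$ and $A^-$ and requires restricting to the open subset $B'_\circ$; it is not an equality of orbits.

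The mechanism producing the monomials in the $\beta_i$ is also not the one you describe. Your $\Gamma$ (a resolution of $B'$) together with a ``projection formula in tangent weights'' is not what is used. The paper works in the mixing space $\mathcal{X}=\E S\times^S X$ over $\P=(\P^{m-1})^r$: there the coefficient of $\beta_1^{j_1}\cdots\beta_r^{j_r}$ in $c_w$ is the ordinary intersection number $\int_{\mathcal{X}}[\mathcal{Y}]\cdot[\P_J]\cdot[\mathcal{X}^w]$, and one needs a connected group acting on $\mathcal{X}$ to move $\mathcal{Y}_J$ transverse to $\mathcal{X}^w$. This is the \emph{fiber group} $\Gamma'=\Hom(\P,\mathcal{B}')$, global sections of the group scheme $\mathcal{B}'=\E S\times^S B'\to\P$. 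Sufficient positivity is precisely the condition that evaluation $\ev_x\colon\Gamma'\to B'$ is surjective at every $x\in\P$; and the $S$-invariance of the function defining $B'_\circ$---part of the definition of $S$-factorization, which you did not invoke---is what guarantees that $\Gamma'_\circ=\bigcap_x\ev_x^{-1}(B'_\circ)$ is dense and nonempty. For part~(ii), Kawamata--Viehweg vanishing is indeed the tool, but it is applied within this same mixing-space framework, so the same structural issues propagate.
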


Parts \ref{A-part-1} and \ref{A-part-2} are proved as Theorem~\ref{t.mainH} and Corollary~\ref{c.mainK}, respectively, in \S\S\ref{s.transverse}--\ref{s.vanishing}.  The latter also includes a parallel statement in K-theory for the expansion of $[\O_Y]$ in the structure sheaf basis $[\O_{X_w}]$.

To get the sharpest results, one should take the largest possible unipotent group $A$ acting on $Y$.  In an $S$-factorization $(A,B')$, for a larger group $A$, one can take a smaller $B'$, making it easier for a basis to be sufficiently positive; then one sees more constraints on the coefficients.  At one extreme, if $A=U$ is the full unipotent radical of $B$, then one can take $B'=T$ to be the maximal torus, and in this case every basis for $S$ is sufficiently positive.  The assertion of Theorem~A(i) is trivially verified in this situation: the conditions imply $Y=X_v$ for some $v$, so $c_w = \delta_{v,w}$.  In K-theory, by contrast, the divisor $\partial Y$ need not be equal to $\partial X_v$, so it may happen that $[\O_Y(-\partial Y)]\neq \xi_v$.  At the other extreme, if $A$ is trivial, then one must take $B'=B$ and in this case the two parts of Theorem~A are the main results of \cite{graham} and \cite{agm}, respectively.

\medskip
The second part of this article is concerned with an application of the positivity theorems, which in fact motivated their discovery.  The setting involves certain infinite-dimensional Grassmannians and flag manifolds, $\thGr$ and $\thFl$.  There is a map $\boxplus\colon \thGr \times \thGr \to \thGr$, whose corresponding pullbacks induce coproduct structures on the equivariant cohomology and K-theory rings, $H_T^*\thGr$ and $K_T\thGr$.  A similar map $\boxplus\colon \thGr \times \thFl \to \thFl$ induces comodule strucures on $H_T^*\thFl$ and $K_T^*\thFl$.  The goal here is to establish a positivity property exhibited by the comultiplication structure constants with respect to Schubert bases.

To formulate the precise statement, let $T$ be a product of countably many copies of $\C^*$, with a basis of characters $y_i$ for $i\in\Z$.  Let $\ee^{y_i}$ be the corresponding multiplicative character, so the equivariant cohomology of a point is $H_T^*(\pt)=\Z[y_i : i\in \Z]$ and the representation ring is $R(T) = \Z[\ee^{\pm y_i} : i\in \Z]$.  Let $\Sgp_\Z$ be the group of permutations of $\Z$ which fix all but finitely many integers.  For each partition $\lambda$, there is a finite-codimensional Schubert variety $\Omega_\lambda \subset \thGr$; similarly, for each $w\in \Sgp_\Z$ there is a finite-codimensional $\Omega_w \subset \thFl$.  The classes of these Schubert varieties form bases for equivariant cohomology:
\[
  H_T^*\thGr = \bigoplus_\lambda H_T^*(\pt)\cdot [\Omega_\lambda] \quad \text{ and } \quad H_T^*\thFl = \bigoplus_w H_T^*(\pt)\cdot [\Omega_w].
\]
Likewise, structure sheaves of Schubert varieties constitute (formal) bases for equivariant K-theory:
\[
  K_T\thGr = \prod_\lambda R(T)\cdot [\O_{\Omega_\lambda}] \quad \text{ and } \quad K_T\thFl = \prod_w R(T)\cdot [\O_{\Omega_w}].
\]

In these bases, the comodule structures on $H_T^*\thFl$ and $K_T\thFl$ are governed by structure constants $\hat{c}_{\mu,v}^w$ and $\hat{d}_{\mu,v}^w$, respectively:
\begin{align*}
 \boxplus^*[\Omega_w] &= \sum_{\mu,v} \hat{c}_{\mu,v}^w \, [\Omega_\mu] \boxtimes [\Omega_v] & \text{in } H_T^*(\thGr \times\thFl); \\
 \boxplus^*[\O_{\Omega_w}] &= \sum_{\mu,v} \hat{d}_{\mu,v}^w \, [\O_{\Omega_\mu}] \boxtimes [\O_{\Omega_v}] & \text{in } K_T(\thGr \times\thFl).
\end{align*}
In the main body of the article, we often deal with {\it Schubert polynomials} $\enS_w$ and {\it Grothendieck polynomials} $\enG_w$, representing $[\Omega_w]$ and $[\O_{\Omega_w}]$, respectively, rather than the classes themselves.  These are polynomials (or series) in variable sets $c$, $x$, and $y$.  Formulas for them are given in \S\ref{s.polys}, and their relation with $[\Omega_w]$ and $[\O_{\Omega_w}]$ is explained via degeneracy loci in \S\ref{s.degloci}.  The above formulas translate to
\begin{align*}
 \Delta \enS_w(c;x;y) &= \sum_{\mu,v} \hat{c}_{\mu,v}^w \, \enS_{w_\mu}(c;x;y) \cdot \enS_v(c';x;y)
\intertext{and}
 \Delta \enG_w(c;x;z) &= \sum_{\mu,v} \hat{d}_{\mu,v}^w \, \enG_{w_\mu}(c;x;z) \cdot \enG_v(c';x;z) ,
\end{align*}
where $\Delta$ is the coproduct homomorphism, defined by sending $c_k$ to $\sum_{i=0}^k c_{k-i} \cdot c'_{i}$, and $z_i = 1-\ee^{-y_i}$.

A priori, $\hat{c}_{\mu,v}^w \in H_T^*(\pt)$ and $\hat{d}_{\mu,v}^w \in R(T)$.  The second main theorem of this paper is a positivity constraint.  

\begin{thmB}
Consider the total ordering $\prec$ on $\Z$ which puts all positive integers before all non-positive ones: $1\prec 2 \prec \cdots \prec -2 \prec -1 \prec 0$.  We have
\begin{align*}
 \hat{c}_{\mu,v}^w &\in \Z_{\geq 0}[y_j-y_i : i \prec j]
\intertext{and}
(-1)^{|\mu|+v-\ell(w)} \hat{d}_{\mu,v}^w &\in \Z_{\geq 0}[ \ee^{y_i-y_j}-1 : i \prec j ].
\end{align*}
\end{thmB}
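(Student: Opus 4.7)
The plan is to realize the coproduct coefficients $\hat{c}_{\mu,v}^w$ and $\hat{d}_{\mu,v}^w$ as Schubert expansion coefficients for the class of a geometric subvariety $Y \subset \thGr \times \thFl$, and then apply Theorem~A with an $S$-factorization adapted to the reordering $\prec$. Since any fixed triple $(\mu,v,w)$ involves only finitely many indices, one first reduces to a truncated setting in which $\thGr$ and $\thFl$ are replaced by finite flag varieties for $G = GL_n$ (with $n$ large), and $\boxplus$ becomes a $T$-equivariant morphism of projective varieties. The pullback $\boxplus^*[\Omega_w]$ is then represented by the class of $Y := \boxplus^{-1}(\Omega_w)$, whose expansion in the product Schubert basis of $H_T^*(\thGr \times \thFl)$ yields the coefficients $\hat{c}_{\mu,v}^w$; similarly in K-theory, using the boundary divisor $\partial Y := \boxplus^{-1}(\partial \Omega_w)$.

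The central geometric input is that $Y$ carries more invariance than just the diagonal $T$-action: there is a unipotent subgroup $A \subset G \times G$ under which $Y$ is invariant, whose $T$-weights are precisely the \emph{forbidden} roots $\{y_j - y_i : j \prec i\}$. The set of forbidden roots splits naturally into two pieces: the "same-block" standard positive roots $y_a - y_b$ with $a < b$ either both positive or both non-positive, and the "swap" roots $y_a - y_b$ with $a > 0 \geq b$ (which are standard \emph{negative} roots). One checks that these weights are closed under the Lie bracket, so $A$ is a genuine unipotent subgroup of the ambient group, and the main geometric assertion is that $\boxplus$ is $A$-equivariant in a suitable sense. Intuitively this reflects the way $\boxplus$ concatenates the positive and non-positive blocks of a back-stable flag: the "same-block" directions act trivially on one factor, and the "swap" directions exchange the two blocks compatibly with $\boxplus$.

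To complete the $S$-factorization, take $B$ to be the standard Borel of the product ambient group and $B' = T \cdot U'$, where $U' \subset U$ is the unipotent subgroup with $T$-weights $\{y_a - y_b : a \leq 0 < b\}$. Each of these is itself a basis element of $\{y_j - y_i : i \prec j\}$, so the basis is sufficiently positive with respect to $B'$. Provided the geometric hypotheses on $Y$ hold (rational singularities, and a Cohen--Macaulay $(A,S)$-invariant boundary $\partial Y$ supporting an ample line bundle), which should follow from the corresponding properties of $\Omega_w$ and $\partial \Omega_w$ together with appropriate flatness of $\boxplus$, Theorem~A(i) yields the positivity of $\hat{c}_{\mu,v}^w$, and Theorem~A(ii), combined with the standard change of basis between ideal sheaves $\xi_w$ and structure sheaves $[\O_{\Omega_w}]$, yields the alternating positivity of $\hat{d}_{\mu,v}^w$.

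The principal obstacle is verifying the extra $A$-invariance of $Y$: this goes beyond diagonal $T$-equivariance and is the key geometric content beyond classical Graham positivity. The proof should proceed via a concrete description of $\boxplus$---likely through the degeneracy-locus realization of $\enS_w$ and $\enG_w$ alluded to in the introduction---in which the rank conditions cutting out the relevant loci are manifestly preserved by the enlarged unipotent action.
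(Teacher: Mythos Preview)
Your setup has a structural mismatch that the paper resolves by passing to the Poincar\'e-dual picture.  You take $Y=\boxplus^{-1}(\Omega_w)\subset\Gr(V)\times\Fl(V)$ and plan to read off $\hat c_{\mu,v}^w$ and $\hat d_{\mu,v}^w$ from its Schubert expansion; but those coefficients are the coordinates of $\boxplus^*[\Omega_w]$ in the \emph{opposite} Schubert basis $[\Omega_\mu]\times[\Omega_v]$, whereas Theorem~A produces positivity only for the expansion in the $B$-invariant basis $[X_{(\mu,v)}]$ (resp.\ $\xi_{(\mu,v)}$ or $\O_{(\mu,v)}$).  These two expansions are not related by any sign-preserving change of basis, so the K-theory step you label ``standard change of basis'' does not go through.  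The paper instead dualizes once: since $\hat c_{\mu,v}^w=\int[\bbOmega_w]\cdot\boxplus_*([X_\mu]\times[X_v])$, one sets $Y=\boxplus(X_\mu\times X_v)$ inside the \emph{target} $\Fl(\bV)$ and expands $[Y]$ in the $[\bX_w]$ basis (likewise $[\O_Y(-\partial Y)]$ in the $\bm\xi_w$ basis).  Now $Y\cong X_\mu\times X_v$ is a Schubert variety, so rational singularities and the Cohen--Macaulay ample boundary are automatic, and the extra unipotent invariance is simply the $B\times B$-invariance of Schubert cells---none of this needs to be extracted from a preimage.  (Note also that $\boxplus$ is a closed embedding, hence not flat; neither the equality $\boxplus^*[\Omega_w]=[\boxplus^{-1}(\Omega_w)]$ nor the singularity hypotheses on your $Y$ are free.)

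Even granting all that, your proposed pair $(A,B')$ is not an $S$-factorization of the standard Borel of $G\times G$: the tangent image of $A\times B'\times A^-$ at the identity consists of the standard positive roots together with the ``swap'' negative roots $y_a-y_b$ with $a>0\ge b$, and this set is not the root set of any parabolic containing $B$ (for instance it contains $-\alpha_0-\alpha_1$ but not $-\alpha_1$).  The paper avoids this by working in $GL(\bV)$ with a nonstandard ordering of the doubled basis---nonpositive unprimed indices, then all primed indices in order, then positive unprimed indices reversed---so that the Borel $\B$ already mixes the two copies of $V$.  With $A$ the image of $U\times U$ (block diagonal upper-unitriangular), an explicit $\B'$ is described and an LDU factorization shows $A\times\B'_\circ\times A^-$ is an open subset of a genuine parabolic $Q\supset\B$.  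The open set $\B'_\circ$ is cut out by coordinate functions that are invariant for the diagonal $S\cong T$ but not for the full $\mathds T=T\times T$; this is precisely why the argument yields positivity only after restricting to $S$, which is the phenomenon your ``swap roots'' are pointing toward but which must be organized in $GL(\bV)$ rather than in the product.
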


When $v=e$ is the identity permutation, the coefficients $j_\mu^w := \hat{c}_{\mu,e}^w$ and $k_\mu^w := \hat{d}_{\mu,e}^w$ have special significance: they appear as coefficients in the {\it double Stanley functions} and {\it double K-Stanley functions} \cite[\S8]{LLS3}, respectively.  Positivity of $j_\mu^w$ was proved by Lam, Lee, and Shimozono in \cite[Theorem~4.22]{LLS}, by an argument which appeals to the quantum-affine correspondence and positivity in equivariant quantum cohomology.  The proof of Theorem~B given here includes a new---and somewhat more direct---proof of positivity of the cohomology coefficients $j^w_\mu$.  (See \S\ref{s.coprod}.)

In K-theory, Lam, Lee, and Shimozono conjectured that the double K-Stanley coefficients exhibit the (alternating) positivity of the second part of Theorem~B; that is, $(-1)^{|\mu|-\ell(w)}k_\mu^w$ lies in $\Z_{\geq 0}[ \ee^{y_i-y_j}-1 : i \prec j ]$ \cite[Conjecture~8.23]{LLS3}.  So Theorem~B includes a proof of their conjecture.\footnote{To compare with the notation of \cite{LLS3}, set $y_i=-\epsilon_i$, and follow the dictionary provided in \cite[\S2.8]{LLS3} to see $\ee^{y_i-y_j}-1$ maps to $-(a_i \ominus a_j)$.  See \S\ref{s.polys} for more detail.}

A K-theoretic quantum-affine correspondence has recently been established \cite{kato}, but no analogous positivity result is currently available in quantum K-theory.  In \cite[\S8]{infschub}, I gave a different argument which establishes a weaker form of positivity in cohomology, by applying Graham's theorem \cite{graham} to the image of the direct sum map, but these methods are insufficient to prove the full positivity theorem (even in cohomology) and do not apply in K-theory.  Hence the use of the strong transversality techniques in Theorem~A.

\medskip

The Schubert polynomials $\enS_w$ form a basis for the polynomial ring $\HH=\Z[c,x,y]$ as a $\Z[y]$-module.  The Grothendieck polynomials live in a certain formal series completion of this ring, which we denote $\hat{\HH}$, but they do not span this completed ring as an algebra over $R(T)$.  One is often interested in the $R(T)$-submodule $\KK \subset \hat{\HH}$ spanned by finite $R(T)$-linear combinations of $\enG_w$.  As shown in \cite[Proposition~8.27]{LLS3}, Theorem~B implies that any product $\enG_u\cdot\enG_v$ is a finite $R(T)$-linear combination of other $\enG_w$'s.  Equivalently:

\begin{cor*}[{\cite[Conjecture~8.27]{LLS3}}]
The submodule $\KK$ is an $R(T)$-subalgebra of $\hat{\HH}$.
\end{cor*}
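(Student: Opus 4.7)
My plan is to deduce the corollary from Theorem~B by the formal Hopf-algebraic argument of \cite[Proposition~8.27]{LLS3}, whose hypothesis is precisely the alternating positivity established by Theorem~B.

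For any $u,v\in\Sgp_\Z$, write the a priori topological expansion
\[
 \enG_u\cdot\enG_v \;=\; \sum_w e_{u,v}^w\,\enG_w \qquad \text{in } \hat\HH,
\]
with $e_{u,v}^w\in R(T)$. The task is to show only finitely many $e_{u,v}^w$ are nonzero.

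The first step is to apply the coaction $\Delta=\boxplus^*$, which is a ring homomorphism, and compare the resulting expansions. For each partition $\mu$ and permutation $a$, extracting the coefficient of $\enG_{w_\mu}\otimes\enG_a$ gives an identity
\[
 \sum_w e_{u,v}^w\,\hat d_{\mu,a}^w \;=\; \bigl[\enG_{w_\mu}\otimes\enG_a\bigr]\bigl(\Delta\enG_u\cdot\Delta\enG_v\bigr) \;\in\; R(T).
\]

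The main obstacle---the essence of \cite[Prop.~8.27]{LLS3}---is converting this identity into a finiteness bound on the support of $(e_{u,v}^w)_w$. The plan is to combine two alternating-positivity inputs: Theorem~B, which forces $(-1)^{|\mu|+\ell(a)-\ell(w)}\hat d_{\mu,a}^w \in \Z_{\geq 0}[\ee^{y_i-y_j}-1 : i\prec j]$, and the AGM-type positivity of Schubert structure constants in $K_T\thFl$ (itself a consequence of Theorem~A, or earlier \cite{brion,agm}), which forces $(-1)^{\ell(w)-\ell(u)-\ell(v)}e_{u,v}^w$ to lie in the same positive cone. Multiplied together, each summand $e_{u,v}^w\,\hat d_{\mu,a}^w$ belongs to the cone $\pm\Z_{\geq 0}[\ee^{y_i-y_j}-1 : i\prec j]$ with $w$-independent sign $(-1)^{|\mu|+\ell(a)-\ell(u)-\ell(v)}$. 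Since sums within this positive cone admit no cancellations while the right-hand side is a Laurent polynomial in $R(T)$ involving only finitely many monomials in $\ee^{y_i-y_j}-1$, only finitely many $w$ can contribute a nonzero summand for each fixed $(\mu,a)$. Letting $(\mu,a)$ vary---in particular exploiting the counit triangularity $\hat d^w_{\emptyset,w}=1$ to isolate individual $e_{u,v}^w$, as in \cite[Prop.~8.27]{LLS3}---then yields the desired finite support, establishing that $\KK$ is closed under multiplication.
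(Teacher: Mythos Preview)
Your high-level approach---deduce the corollary from Theorem~B by invoking \cite[Proposition~8.27]{LLS3}---is exactly what the paper does; the paper simply cites that proposition without reproducing its argument.

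Your sketch of the LLS3 argument, however, contains a gap. You assert that AGM-type positivity forces $(-1)^{\ell(w)-\ell(u)-\ell(v)}e_{u,v}^w$ into ``the same positive cone'' as the coproduct coefficients $\hat d_{\mu,a}^w$, namely $\Z_{\geq 0}[\ee^{y_i-y_j}-1:i\prec j]$. But the standard AGM theorem (equivalently, Theorem~A with trivial $A$) only places the product structure constants in the cone $\Z_{\geq 0}[\ee^{-\alpha_i}-1:i\in\Z]$ with $\alpha_i=y_i-y_{i+1}$; no $S$-factorization is available that would put them in the $\prec$-cone. As computed in \S\ref{s.coprod}, the $\prec$-cone equals $\Z_{\geq 0}[\ee^{-\alpha_0}-1,\,\ee^{\alpha_i}-1:i\neq 0]$, so for each $i\neq 0$ the two cones contain $\ee^{\alpha_i}-1$ and $\ee^{-\alpha_i}-1$ respectively. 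The semiring they jointly generate is not pointed: the identity
\[
  (\ee^{\alpha_i}-1)+(\ee^{-\alpha_i}-1)+(\ee^{\alpha_i}-1)(\ee^{-\alpha_i}-1)=0
\]
shows three nonzero elements summing to zero, so your no-cancellation step (``sums within this positive cone admit no cancellations'') does not go through as stated. If you want to flesh out the argument rather than simply cite \cite{LLS3}, you must revisit how that proposition actually combines the two positivity inputs, or find a single pointed cone containing both families of coefficients.
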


\noindent
For similar reasons, the statements of \cite[\S8.7]{LLS3} hold unconditionally.  


\medskip
\noindent
{\it Acknowledgements.} The idea of looking for a stronger equivariant positivity theorem was indirectly inspired by a fact I learned from Allen Knutson: Graham's positivity theorem establishes that Schubert structure constants are sums of {\it squarefree} monomials in the positive roots.
\renewcommand{\bfseries}{\scshape}


\tableofcontents


\renewcommand{\bfseries}{\itshape}

\part{Transversality and strong positivity}

\section{Background}\label{s.bg}

Here we collect some of the basic facts we'll need, mainly to fix conventions and notation.  More details can be found in \cite{agm}, as well as \cite{ecag} (especially for equivariant cohomology) and \cite[\S5]{chriss-ginzburg} (for equivariant K-theory).

\subsection{Flag varieties}

Let $G \supset P \supset B \supset T$ be a semisimple linear algebraic group (over $\C$), with parabolic subgroup, Borel subgroup, and maximal torus.  Let $B^-$ be the opposite Borel group.  Let $R^+$ be the set of positive roots, i.e., the characters of $T$ acting on the Lie algebra of $B$.  Let $\Delta = \{\alpha_1,\ldots,\alpha_n\}$ be the set of simple roots.  
Unless otherwise specified, we generally assume the simple roots $\Delta$ are a basis for the character lattice of $T$.

Let $W=N_G(T)/T$ be the Weyl group, with parabolic subgroup $W_P = N_P(T)/T$.  The longest elements are $w_\circ \in W$ and $w_\circ^P \in W^P$.

We are mainly concerned with the generalized flag variety $X=G/P$, with the natural action of $T$ by left multiplication.  The $T$-fixed points $p_w$ in $X$ are indexed by minimal length coset representatives $w\in W$ for $W/W_P$.  One has Schubert varieties $X_w = \overline{B\cdot p_w}$ and opposite Schubert varieties $X^w=\overline{B^-\cdot p_w}$.  Our conventions are set up so that $\dim X_w = \codim X^w = \ell(w)$.  There is a boundary divisor $\partial = \partial X_w \subset X_w$, the union of all $X_v$ with $v\leq w$ and $\ell(v)=\ell(w)-1$.

Schubert varieties have rational singularities, and the boundary divisors $\partial X_w$ are Cohen-Macaulay.

\subsection{Bott-Samelson varieties}

We frequently need notation for the {\it balanced product}: let $H$ be a group acting on the right $Y$ and on the left on $Z$.  Then we have a quotient
\[
  Y \times^H Z := (Y \times Z)/(y\cdot h, z) \sim (y,h\cdot z).
\]

Now fix a reduced word $\uw = (i_1,\ldots,i_\ell)$ for $w$, so $w=s_{i_1}\cdots s_{i_\ell}$ is a minimal expression, and $\ell=\ell(w)$.  The Bott-Samelson variety is
\[
  \tX_{\uw} = P_{i_1}\times^B P_{i_2} \times^B \cdots \times^B P_{i_\ell}/B,
\]
where $P_i$ is the minimal parabolic containing $B$ whose Lie algebra also has the negative root $-\alpha_i$.  This is a smooth projective variety of dimension $\ell(w)$, and the multiplication map $\tX_{\uw} \to X=G/P$, sending $[p_1,\ldots,p_\ell]$ to $p_1\cdots p_\ell P$, is birational onto $X_w$.

There are similar Bott-Samelson desingularizations for the opposite Schubert varieties,
\[
 \phi\colon \tX^{\uw} \to X^w,
\]
but here $\uw$ should be a reduced word for $w_\circ w w_\circ^P$.  (The map to $X$ is the composition of $\tilde{X}_{\ul{w_\circ w w_\circ^P}} \to X_{w_\circ w w_\circ^P}$ with the involution $X \xrightarrow{w_\circ \cdot} X$, which sends $X_{w_\circ w w_\circ^P}$ to $w_\circ \cdot X_{w_\circ w w_\circ^P} = X^w$.)  We will mainly use these ``opposite'' Bott-Samelson varieties.

There are morphisms
\begin{align*}
  m\colon B \times X^w \to X, & & (b,x) \mapsto b\cdot x
  \intertext{and}
\tilde{m}\colon B\times\tX^{\uw} \to X, & & (b,x)\mapsto b\cdot\phi(x).
\end{align*}

We will need the following fact:
\begin{proposition}[{\cite[\S9]{agm}}] \label{p.smth1}
The morphism $m$ is flat and has normal fibers; the morphism $\tilde{m}$ is smooth.
\end{proposition}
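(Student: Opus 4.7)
The plan is to identify the fibers of $m$ and $\tilde{m}$ explicitly, show they have constant dimension, and invoke miracle flatness.

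First, I would use $B$-equivariance (for the translation action on the first factor) to describe the fiber of $m$ over $y \in X$ as
\[ m^{-1}(y) = \{(b,x) \in B \times X^w : bx = y\} \isom \{b \in B : b^{-1}y \in X^w\} = \mu_y^{-1}(X^w), \]
where $\mu_y \colon B \to X$ is the morphism $b \mapsto b^{-1}y$; being the composition of inversion with the orbit map $B \to B\cdot y$, the morphism $\mu_y$ is smooth onto the Schubert cell $B \cdot y$. Analogously, $\tilde{m}^{-1}(y) \isom B \times_X \tX^{\uw}$, the fiber product of $\mu_y$ with $\phi$.

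Next I would compute dimensions. If $y \in B \cdot p_v$, the cell $B\cdot y$ has dimension $\ell(v)$, so $\mu_y$ has relative dimension $\dim B - \ell(v)$. The intersection $X^w \cap B \cdot p_v$ is open in the Richardson variety $X_v^w = X_v \cap X^w$, nonempty exactly when $v \geq w$, and of dimension $\ell(v)-\ell(w)$. Therefore
\[ \dim m^{-1}(y) \;=\; (\ell(v)-\ell(w)) + (\dim B - \ell(v)) \;=\; \dim B - \ell(w), \]
independent of $v$, and constant over the image $B \cdot X^w = \bigsqcup_{v \geq w} B\cdot p_v$, which is open in $X$ (its complement is a downward-closed union of Bruhat cells). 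Since $\phi$ is birational, $\tilde{m}^{-1}(y)$ has the same dimension.

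To conclude I would apply miracle flatness: a morphism from a Cohen-Macaulay source to a regular target with equidimensional fibers is flat. For $m$, the source $B \times X^w$ is Cohen-Macaulay because Schubert varieties are, and the fibers over the open image are equidimensional, so $m$ is flat (after factoring through the open immersion of the image into $X$). Fiber normality then follows from the pullback description: $\mu_y^{-1}(X^w)$ is normal because $\mu_y$ is smooth and $X^w$ is normal. For $\tilde{m}$, the source $B \times \tX^{\uw}$ is smooth, and the fibers $B \times_X \tX^{\uw}$ are smooth, being pullbacks of the smooth $\mu_y$ along the smooth $\tX^{\uw}$; miracle flatness then gives flatness, and flat with smooth fibers is smooth. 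The main technical point lies in the equidimensionality of fibers (hinging on the dimension formula for open Richardsons and the birationality of $\phi$); the Cohen-Macaulay and normality properties of Schubert varieties are standard.
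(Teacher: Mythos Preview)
Your miracle-flatness approach gets the flatness of $m$, but the arguments for normal fibers of $m$ and for smoothness of $\tilde m$ have a common gap: the orbit map $\mu_y\colon B\to X$ is \emph{not} smooth as a morphism to $X$.  It is smooth onto the Schubert cell $B\cdot y$, but that cell is a proper locally closed subvariety of $X$, and a smooth surjection onto a locally closed subvariety is not a smooth morphism to the ambient space.  Consequently:
\begin{itemize}
\item For $m$, the fiber $\mu_y^{-1}(X^w)$ is really the pullback of the open Richardson $X^w\cap B\cdot p_v$ along the smooth $\mu_y\colon B\to B\cdot p_v$.  So your normality claim requires normality of open Richardson varieties, which is true but is an extra (nontrivial) input you should cite.
\item For $\tilde m$, neither leg of the fiber square $B\times_X\tilde X^{\uw}$ is smooth as a map to $X$: $\mu_y$ is not, and the Bott--Samelson map $\phi$ is not.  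So ``pullback of smooth along smooth'' does not apply, and you have not established that the fibers are smooth.  Your dimension count for $\tilde m^{-1}(y)$ via birationality of $\phi$ is also incomplete: $\phi$ can have positive-dimensional fibers over the singular locus of $X^w$, and $X^w\cap B\cdot p_v$ may well meet that locus.
\end{itemize}

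The argument the paper invokes from \cite{agm} sidesteps all of this by using the $B^-$-equivariance of $X^w$ and $\tilde X^{\uw}$ rather than analysing individual fibers.  (You can see the same trick in the paper's own proof of Lemma~\ref{l.smthfact}.)  One considers the action map
\[
\alpha\colon B\times B^-\times X^w \longrightarrow B\times X^w,\qquad (b,b^-,x)\mapsto (b,\,b^-\!\cdot x),
\]
which is smooth and surjective.  The composite $m\circ\alpha$ sends $(b,b^-,x)\mapsto bb^-x$, i.e.\ it is the restriction of the $G$-equivariant map $G\times X^w\to X$ to the open subset $BB^-\times X^w$.  Since $G$ acts transitively on $X$, that map is a locally trivial fiber bundle with fiber the (normal) preimage of $X^w$ in $G$; hence $m\circ\alpha$ is flat with normal fibers.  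Both properties then descend along the faithfully flat $\alpha$ to $m$.  The same factorisation with $\tilde X^{\uw}$ in place of $X^w$ shows $\tilde m\circ\alpha$ is smooth (restriction of a fiber bundle with smooth fiber), hence $\tilde m$ is smooth by descent.  This avoids Richardson varieties and any direct fiber analysis.
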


The same is true, with the same proof, if $B$ is replaced by any parabolic $Q$ containing $B$.

\subsection{Mixing spaces and fiber bundles}\label{ss.mixing}

Let $S\subset T$ be a subtorus, and choose a basis $\beta_1,\ldots,\beta_r$ for its character lattice.  Using this basis, we form approximations to the classifying space of $S$.  For $m\gg0$, take $\E S = (\C^m\setminus 0)^{\times r}$, with $S$ acting (on the right) via the characters $\beta_1,\ldots,\beta_r$, so that
\[
  \mathds{B}S = \E S/S = (\P^{m-1})^{\times r} =: \P.
\]
We often need subvarieties
\[
  \P_J = \P^{j_1} \times \cdots \P_{j_r} \quad\text{ and }\quad\P^J = \P^{m-1-j_1} \times \P^{m-1-j_r},
\]
for a multi-index $J=(j_1,\ldots,j_r)$ of nonnegative integers; we have $\dim \P_J = \codim \P^J = |J| := j_1+\cdots+j_r$.  The subvariety $\P_J$ has a boundary divisor
\begin{equation}
\partial_J=\partial \P_J = \bigcup_{i=1}^r \P^{j_1}\times \cdots \times \P^{j_i-1} \times \cdots \times \P^{j_r}.
\end{equation}

The choice of basis also determines a {\it mixing space}
\[
 \cY = \E S\times^S Y := (\E S \times Y)/ (e\cdot s, y) \sim (e, s\cdot y),
\]
for any variety $Y$ with an action of $S$ (on the left).  Projection onto the first factor makes this a fiber bundle over $\P$, with fiber $Y$.  Generally, if a variety $Y$ is given, we use calligraphic font to denote the corresponding mixing space.  A subscript $J$ often denotes the restriction to $\P_J$, e.g., $\cY_J = \rho^{-1}\P_J \cap \cY$, where $\rho\colon \cX \to \P$ is projection.

An $S$-invariant effective divisor $\partial Y \subset Y$ determines a divisor $\partial\cY \subset \cY$, a fiber bundle over $\P$, as usual.  When the context is clear, we often write simply $\partial$ for such divisors.  Similarly abusing notation, we write $\partial_J \subset \cY_J$ for the pullback of $\partial\P_J$ under the projection $\cY_J \to \P_J$.

\begin{lemma}\label{l.tor0}
Let $\partial Y$ an $S$-invariant effective divisor, let $\partial=\partial\cY_J$ denote the corresponding divisor on $\cY_J$, and consider the divisor $\partial+\partial_J$.  Then
\[
  \O_{\cY_J}(-\partial-\partial_J) = \O_{\cY_J}(-\partial)\otimes \O_{\cY_J}(-\partial_J) \quad \text{and} \quad Tor^{\cY_J}_i( \O_{\cY_J}(-\partial), \O_{\cY_J}(-\partial_J) ) = 0
\]
for $i>0$.  If $Y$ and $\partial Y$ are Cohen-Macaulay, then
\[
  \omega_{\cY_J}(\partial+\partial_J) = \omega_{\cY_J}(\partial)\otimes \O_{\cY_J}(\partial_J) \quad \text{and} \quad  Tor^{\cY_J}_i( \omega_{\cY_J}(\partial), \O_{\cY_J}(\partial_J) ) = 0
\]
for $i>0$.
\end{lemma}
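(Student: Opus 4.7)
The plan is to exploit the fiber-bundle structure $\rho\colon \cY_J \to \P_J$. Since $\P_J$ is smooth and $\partial\P_J$ is Cartier on it, the divisor $\partial_J = \rho^{-1}(\partial\P_J)$ is Cartier on $\cY_J$, and $\O_{\cY_J}(\pm\partial_J) = \rho^*\O_{\P_J}(\pm\partial\P_J)$ is an invertible sheaf. Invertible sheaves are locally free, hence flat, and this single fact will supply both Tor-vanishings in the lemma: $Tor^{\cY_J}_i(\mathcal{F}, \mathcal{L}) = 0$ for $i > 0$ whenever $\mathcal{L}$ is locally free and $\mathcal{F}$ is coherent. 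Taking $\mathcal{L} = \O_{\cY_J}(-\partial_J)$ and $\mathcal{F} = \O_{\cY_J}(-\partial)$ handles the Tor statement in the first part; taking $\mathcal{L} = \O_{\cY_J}(\partial_J)$ and $\mathcal{F} = \omega_{\cY_J}(\partial)$ handles the Tor statement in the second.

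For the tensor-product identification in the first part, I would first observe that $\partial = \partial\cY_J$ is the sub-fiber-bundle with fiber $\partial Y$, so $\partial$ maps surjectively onto $\P_J$, whereas $\partial_J$ maps to the proper subscheme $\partial\P_J$. In particular, $\partial$ and $\partial_J$ share no common irreducible components, so $\partial + \partial_J$ is an honest effective divisor. \'Etale-locally over $\P_J$ the bundle trivializes as $\cY_J \cong Y \times U$ for an open $U \subset \P_J$, and in this model $\partial$ corresponds to $\partial Y \times U$ while $\partial_J$ corresponds to $Y \times (\partial\P_J \cap U)$. These two divisors are cut out by equations in disjoint sets of local coordinates (one set on $Y$, one set on $\P_J$), so the ideal sheaf of their scheme-theoretic union is the product $\O(-\partial) \cdot \O(-\partial_J)$, which the flatness of $\O(-\partial_J)$ upgrades to the tensor product $\O(-\partial) \otimes \O(-\partial_J)$.

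For the second part, the Cohen-Macaulay hypothesis on $Y$ and $\partial Y$ guarantees that $\cY_J$ and $\partial\cY_J$ are themselves Cohen-Macaulay (CM is preserved by a fiber bundle over a smooth base), so $\omega_{\cY_J}$ and $\omega_{\cY_J}(\partial) = \omega_{\cY_J} \otimes \O(\partial)$ are well-defined coherent sheaves. After pulling out the common factor $\omega_{\cY_J}$, the identification $\omega_{\cY_J}(\partial + \partial_J) = \omega_{\cY_J}(\partial) \otimes \O(\partial_J)$ reduces to the line-bundle-type identity $\O(\partial + \partial_J) = \O(\partial) \otimes \O(\partial_J)$, which follows from the same \'etale-local product structure, with signs reversed.

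There is no serious obstacle here: the entire lemma reduces to the observation that $\O_{\cY_J}(\pm\partial_J)$ is an invertible sheaf pulled back from the smooth base $\P_J$, which simultaneously supplies flatness (hence the Tor-vanishings) and, via \'etale-local trivialization, the transversality of $\partial$ against $\partial_J$ needed for the tensor-product identifications. The only verification that requires any care is the \'etale-local product structure of the bundle, which is a standard feature of the mixing space construction.
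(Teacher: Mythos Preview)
Your proposal is correct and identifies exactly the point the paper uses: the paper's entire proof is the single parenthetical remark that ``these statements follow directly from the fact that $\partial_J$ is a Cartier divisor.'' Your \'etale-local trivialization argument is unnecessary extra work---once $\O_{\cY_J}(\pm\partial_J)$ is invertible, the tensor identities $\O(-\partial-\partial_J)=\O(-\partial)\otimes\O(-\partial_J)$ and $\O(\partial+\partial_J)=\O(\partial)\otimes\O(\partial_J)$ hold purely because locally $\partial_J$ is cut out by a single non-zero-divisor, so tensoring with $\O(\pm\partial_J)$ is multiplication by that element; no appeal to the product structure of the fiber bundle is needed.
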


\noindent
(These statements follow directly from the fact that $\partial_J$ is a Cartier divisor.)

\subsection{Equivariant cohomology and K-theory}\label{ss.eqK}

We refer to \cite{ecag} and \cite{chriss-ginzburg} for details.

The $S$-equivariant cohomology of a point is the symmetric algebra of the character lattice: $H_S^*(\pt)=\Sym^*_\Z(M_S)$.  The equivariant K-theory of a point is the representation ring $R(S)$, which is naturally isomorphic to the group algebra $\Z[M_S]$.  Choosing a basis $\beta_1,\ldots,\beta_r$ for $M_S$, we have $H_S^*(\pt)=\Z[\beta_1,\ldots,\beta_r]$ and $R(S)=\Z[\ee^{\pm\beta_1},\ldots,\ee^{\pm\beta_r}]$.

The rings $H_S^*(X)$ and $K_S(X)$ are algebras over $H_T^*(\pt)$ and $R(S)$, respectively.  The Schubert classes $[X^w]$ form an additive basis for $H_S^*(X)$ as a module over $H_S^*(\pt)$ as $w$ runs over minimal coset representatives for $W/W_P$; the classes $[X_w]$ form another basis.  Likewise, the Schubert classes $\O^w = [\O_{X^w}]$ form a basis for $K_S(X)$ over $R(S)$, as do the {\it ideal sheaf classes} $\xi_w = [\O_{X_w}(-\partial)]$.  In K-theory, there are also bases formed by $\O_w = [\O_{X_w}]$ and $\xi^w = [\O_{X^w}(-\partial)]$.

Pushforward to a point determines a Poincar\'e pairing on equivariant cohomology, sending $(a,b)$ to $\int a\cdot b$.  In K-theory, product is derived tensor product, and the pushforward is given by the equivariant Euler characteristic, so the pairing is given by $\chi_S(X, a\cdot b)$.

With respect to the Poincar\'e pairing, the classes $[X^w]$ and $[X_w]$ form dual bases for $H_S^*X$; similarly, $\O^w$ and $\xi_w$ are dual bases for $K_S(X)$, as are $\O_w$ and $\xi^w$.  That is,
\[
 \int_X [X^w]\cdot [X_v]  = \delta_{w,v} \quad \text{ and }\quad  \chi_S( X,\, \O^w\cdot \xi_v) = \chi_S( X,\, \O_w\cdot \xi^v) = \delta_{w,v}
\]
in $H_S^*(\pt)$ and $R(S)$, respectively.

The finite-dimensional spaces $\cX$ may be used as approximations for computing equivariant cohomology and K-theory: that is, calculations in the $H_S^*(\pt)$-algebra $H_S^*(X)$ may be done in the $H^*(\P)$-algebra $H^*(\cX)$, and calculations in the $R(S)$-algebra $K_S(X)$ may be done in the $K(S)$-algebra $K(\cX)$.  In cohomology, this is standard and parallels constructions by Totaro and Edidin-Graham for equivariant Chow groups \cite{ecag}; for K-theory, it is explained in \cite[\S3]{agm}.

As $J$ ranges over multi-indices with $0\leq j_i\leq m-1$, the classes $[\P^J]$ and $[\P_J]$ form dual bases for $H^*(\P)$ (over $\Z$).  Likewise, the classes $\O^J = \O_{\P^J}$ and $\O_J(-\partial_J) = \O_{\P_J}(-\partial \P_J)$ form dual bases for $K(\P)$. 

Putting these observations together, suppose one writes the class of an invariant subvariety in $H_S^*(X)$ as
\[
  [Y] = \sum_{w,J} c_{w,J} \beta_1^{j_1}\cdots \beta_r^{j_r} \cdot [X_w],
\]
for some integers $c_{w,J}$.  Then
\begin{equation}\label{e.c1}
  c_{w,J} = \int_{\cX} [\cY]\cdot [\P_J]\cdot[\cX^w] ,
\end{equation}
where (as usual) $\cY \subset \cX$ is the subvariety corresponding to $Y \subset X$, and $\int_{\cX}\colon H^*\cX \to \Z$ is pushforward to a point.  The key idea is that, using $H^*(\P)$ to approximate $H_S(\pt)$, the class $[\P^J]$ corresponds to the monomial $\beta_1^{j_1}\cdots \beta_r^{j_r}$; using Poincar\'e duality, this monomial is picked out by cup product with the dual class $[\P_J]$.

Similarly, one may write the class of an equivariant sheaf in $K_S(X)$ as
\[
  [F] = \sum_{w,J} d_{w,J} (1-\ee^{-\beta_1})^{j_1}\cdots (1-\ee^{-\beta_r})^{j_r} \cdot \xi_w,
\]
or as
\[
  [F] = \sum_{w,J} p_{w,J} (1-\ee^{-\beta_1})^{j_1}\cdots (1-\ee^{-\beta_r})^{j_r} \cdot \O_{w},
\]
for integers $d_{w,J}$ and $p_{w,J}$.  Then
\begin{align}
  d_{w,J} &= \chi(\cX, \, [\mathcal{F}]\cdot [\O_J(-\partial_J)]\cdot[\O_{\cX^w}] ) & \text{and} \label{e.d1} \\
  p_{w,J} &= \chi(\cX, \, [\mathcal{F}]\cdot [\O_J(-\partial_J)]\cdot[\O_{\cX^w}(-\partial^w)] ),\label{e.p1}
\end{align}
where $\mathcal{F}$ is the sheaf on $\cX$ corresponding to the equivariant sheaf $F$.  As before, the idea is that the class $[\O_{\P^J}]$ corresponds to the ``monomial'' $(1-\ee^{-\beta_1})^{j_1}\cdots (1-\ee^{-\beta_r})^{j_r}$.

One of the main theorems of \cite{agm} says that for certain sheaves $F$, the integers $d_{w,J}$ and $p_{w,J}$ have predictable signs.  It requires the notion of a {\it positive basis} for the subtorus $S$: the basis $\{\beta_1,\ldots,\beta_r\}$ is {\it positive} if, for each positive root $\alpha\in R^+$, the restriction $\alpha|_S$ to $S$ is a nonnegative linear combination of the characters $\beta_i$.

\begin{theorem}[{\cite[Theorem~4.1]{agm}}]
Assume the subtorus $S$ has a positive basis, and fix such a choice.  Suppose $Y \subset X$ is an $S$-invariant subvariety with rational singularities, and $\partial = \partial Y$ is an $S$-invariant, Cohen-Macaulay effective divisor which supports an ample line bundle.  Expand $\O_Y(-\partial)$ as
\[
  [\O_Y(-\partial)]= \sum_{w,J} d_{w,J} (1-\ee^{-\beta_1})^{j_1}\cdots (1-\ee^{-\beta_r})^{j_r} \cdot \xi_w.
\]
Then
\[
  (-1)^{\dim Y - \ell(w) + |J|} d_{w,J} \in \Z_{\geq 0}.
\]
\end{theorem}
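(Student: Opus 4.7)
The plan is to evaluate $d_{w,J}$ from formula~(\ref{e.d1}) by reducing it, via a transversality argument, to an Euler characteristic on a single well-behaved auxiliary variety $\cZ$, and then extract the sign by combining Serre duality with Kawamata--Viehweg vanishing.

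First I would replace $[\O_{\cX^w}]$ in~(\ref{e.d1}) by $\phi_*[\O_{\tilde\cX^{\uw}}]$ coming from the Bott--Samelson resolution, which is legitimate since $X^w$ has rational singularities. To make the resulting intersection with $\cY$ honest, I would then invoke the smooth morphism $\tilde m\colon B\times\tilde X^{\uw}\to X$ of Proposition~\ref{p.smth1}: in the mixing-space setting, this lets one form a fiber product $\cZ$ that is smooth over $\cY$ (and hence inherits rational singularities) and that receives a Cohen--Macaulay divisor $\partial_\cZ$ pulled back from $\partial$ on $\cY$ together with the $\partial_J$ coming from the base of the mixing construction. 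Lemma~\ref{l.tor0} is exactly what ensures that $\partial$ and $\partial_J$ combine as an honest tensor product, with no higher Tor corrections. A projection formula argument, using the smoothness of $\tilde m$ and the Bott--Samelson morphism, then collapses~(\ref{e.d1}) to
$$d_{w,J} = \chi\bigl(\cZ,\,\O_\cZ(-\partial_\cZ)\bigr),$$
where $\dim\cZ = \dim Y + |J| - \ell(w)$ and $\partial_\cZ$ supports an ample line bundle (built from the ample line bundle supported on $\partial\subset Y$ together with the ample divisor $\partial_J\subset\P_J$).

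Since $\cZ$ has rational singularities (hence is Cohen--Macaulay), Serre duality gives
$$\chi(\cZ,\O_\cZ(-\partial_\cZ)) = (-1)^{\dim\cZ}\chi(\cZ,\omega_\cZ(\partial_\cZ)).$$
Passing to a resolution $\tilde\cZ\to\cZ$, one has $R\pi_*\omega_{\tilde\cZ}=\omega_\cZ$ (Grauert--Riemenschneider plus rational singularities), so that $H^i(\cZ,\omega_\cZ(\partial_\cZ))=H^i(\tilde\cZ,\omega_{\tilde\cZ}\otimes\pi^*\O(\partial_\cZ))$. Because $\partial_\cZ$ supports an ample line bundle, $\pi^*\O(\partial_\cZ)$ is big and nef on $\tilde\cZ$, and Kawamata--Viehweg vanishing forces these groups to vanish for $i>0$. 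Consequently
$$(-1)^{\dim Y + |J| - \ell(w)}\,d_{w,J} \;=\; \dim_\C H^0(\cZ,\omega_\cZ(\partial_\cZ)) \;\geq\; 0,$$
which is the claimed positivity.

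The main obstacle is the transversality reduction that produces $\cZ$ with all required properties simultaneously: the correct dimension $\dim Y + |J| - \ell(w)$, rational singularities, and a Cohen--Macaulay divisor $\partial_\cZ$ supporting an ample line bundle. Lemma~\ref{l.tor0} is engineered precisely so that $\partial$ and $\partial_J$ interact cleanly, but propagating the ampleness of $\O_Y(\partial)$ through the smooth morphism $\tilde m$ and through the mixing-space construction is delicate; in particular, one must argue that restricting the relevant ample class from $Y$ (and from $\P_J$) to the appropriate piece of $\cZ$ remains ample, so that Kawamata--Viehweg genuinely applies on the resolution.
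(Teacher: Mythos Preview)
Your overall strategy---reduce $d_{w,J}$ to an Euler characteristic on a well-behaved auxiliary variety, then apply Serre duality and Kawamata--Viehweg vanishing---matches the approach of \cite{agm} (which this paper follows in \S\S\ref{s.transverse}--\ref{s.vanishing}). The gap is in the construction of your $\cZ$, and relatedly, in the role of the positivity hypothesis.

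You say $\cZ$ is formed as a fiber product using the smooth map $\tilde m\colon B\times\tX^{\uw}\to X$ ``in the mixing-space setting,'' that it is smooth over $\cY$, and that it has dimension $\dim Y+|J|-\ell(w)$. These claims are not compatible. In the mixing space $\cX\to\P$, the group $B$ does not act; what acts is the fiber group $\Gamma=\Hom(\P,\mathcal B)$ of global sections of the group scheme $\mathcal B=\E S\times^S B$ (see \S\ref{s.gp}). If your $\cZ$ is the full fiber product $\cY_J\times_\cX(\Gamma\times\tcX^{\uw})$, it is indeed smooth over $\cY_J$, but it carries an extra $\Gamma$-factor: its dimension is too large by $\dim\Gamma$, it is not projective, and no projection-formula manipulation will collapse $\chi(\cX,\,\cdot\,)$ to $\chi(\cZ,\O_\cZ(-\partial_\cZ))$. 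To obtain a projective variety of the correct dimension one must take the fiber over a general $\gamma\in\Gamma$, i.e., work with $\cY_J\cap\gamma\cdot\cX^w$. But then $\cZ$ is no longer smooth over $\cY$, and you must argue separately (via generic fibers of the projection $\pi\colon\cZ_J\to\Gamma$) that it inherits rational singularities and that the divisor stays Cohen--Macaulay.

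More seriously, your proposal never invokes the hypothesis that the basis $\beta_1,\ldots,\beta_r$ is positive, and without it the argument fails. Positivity is exactly the condition that $\mathcal B$ is generated by global sections, i.e., that each evaluation map $\ev_x\colon\Gamma\to B$ is surjective. This is what guarantees that a general $\gamma\in\Gamma$ achieves transversality simultaneously on every fiber of $\cX\to\P$, so that $\cY_J\cap\gamma\cdot\cX^w$ has the expected dimension and singularities. The paper makes this explicit: first choose a general $\gamma$ to produce the projective variety $\cY_J\cap\gamma\cdot\cX^w$ (Corollary~\ref{c.intersect}), identify $d_{w,J}$ with its Euler characteristic using connectedness of $\Gamma$ rather than a projection formula (Lemma~\ref{l.push}), and only then run Serre duality and Kawamata--Viehweg on a desingularization (Theorem~\ref{t.vanish}, Lemmas~\ref{l.higher1}--\ref{l.push1}). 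Your final paragraph correctly flags transversality as the obstacle, but the resolution is not an ampleness argument---it is the fiber-group machinery, powered by the positivity of the basis.
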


This can be rephrased equivalently as follows:

\begin{cor*}
For any positive basis $\beta_1,\ldots,\beta_r$ of $S$, and a variety $Y$ and divisor $\partial$ as above, in the expansion
\[
  [\O_Y(-\partial)] = \sum_w d_w\, \xi_w
\]
we have $(-1)^{\dim Y - \ell(w)} d_w \in \Z_{\geq0}[ \ee^{-\beta_1}-1,\ldots,\ee^{-\beta_r}-1]$.
\end{cor*}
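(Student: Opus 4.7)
The plan is to derive this corollary as a direct reformulation of the theorem stated immediately above it, by a straightforward change of variables. The key observation is that the two statements involve the same data expressed via two generating sets that differ only by an overall sign, namely $1-\ee^{-\beta_i}$ versus $\ee^{-\beta_i}-1 = -(1-\ee^{-\beta_i})$.

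First I would argue that, because $\{\beta_1,\ldots,\beta_r\}$ is a basis for the character lattice of $S$, each element of $R(S)$ can be written uniquely as a finite $\Z$-linear combination of monomials $\prod_i (1-\ee^{-\beta_i})^{j_i}$. Applying this to the coefficient $d_w$ in the Schubert expansion $[\O_Y(-\partial)] = \sum_w d_w\,\xi_w$, and comparing with the expansion supplied by the theorem, one concludes
\[
  d_w = \sum_J d_{w,J} \,(1-\ee^{-\beta_1})^{j_1}\cdots (1-\ee^{-\beta_r})^{j_r},
\]
with the same integers $d_{w,J}$ as in the theorem. Substituting $u_i = \ee^{-\beta_i}-1$, each factor $(1-\ee^{-\beta_i})^{j_i}$ becomes $(-1)^{j_i} u_i^{j_i}$, so
\[
  (-1)^{\dim Y-\ell(w)} d_w \;=\; \sum_J (-1)^{\dim Y - \ell(w) + |J|}\, d_{w,J}\, u_1^{j_1}\cdots u_r^{j_r}.
\]
The theorem guarantees that each coefficient $(-1)^{\dim Y - \ell(w) + |J|} d_{w,J}$ is a nonnegative integer, and this immediately places the right-hand side in $\Z_{\geq 0}[u_1,\ldots,u_r] = \Z_{\geq 0}[\ee^{-\beta_1}-1,\ldots,\ee^{-\beta_r}-1]$, as desired.

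There is essentially no obstacle: all the geometric content sits in the theorem, and the corollary is a bookkeeping statement about signs under an invertible change of variables. The only point requiring any care is the observation that $d_w$ admits an unambiguous expansion in the monomials $(1-\ee^{-\beta_i})^{j_i}$ with integer coefficients matching the $d_{w,J}$ of the theorem; this is immediate from the $R(S)$-freeness of the basis $\{\xi_w\}$ of $K_S(X)$ together with the fact that the characters $\beta_i$ form a $\Z$-basis of $M_S$. In particular, the implication runs both ways, so the corollary and the theorem are logically equivalent formulations of the same positivity statement.
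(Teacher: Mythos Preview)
Your proposal is correct and matches the paper's approach: the paper gives no separate proof, simply introducing the corollary with ``This can be rephrased equivalently as follows,'' and your argument spells out exactly why the two formulations are equivalent via the sign change $(1-\ee^{-\beta_i}) = -(\ee^{-\beta_i}-1)$.

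One minor correction: your opening claim that \emph{every} element of $R(S)$ is a finite $\Z$-linear combination of monomials $\prod_i (1-\ee^{-\beta_i})^{j_i}$ is false, since $R(S)=\Z[\ee^{\pm\beta_i}]$ contains $\ee^{\beta_1}$, which is not a polynomial in $1-\ee^{-\beta_1}$. Fortunately you do not actually use this; the theorem itself furnishes the expansion of $d_w$ in these monomials (this is how the integers $d_{w,J}$ are defined, via the approximation spaces in \S\ref{ss.eqK}), and uniqueness holds because the $1-\ee^{-\beta_i}$ are algebraically independent. So the argument stands once you drop the unnecessary general claim and simply take the expansion as given by the theorem.
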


Similar positivity statements hold for the integers $p_{w,J}$.  (See \cite[Theorem~4.1 and Corollary~5.1]{agm}.)

\section{$S$-factorizations}\label{s.factor}

Let $U'\subset B$ be a closed unipotent subgroup which is normalized by $T$, and let $B'=T\cdot U' \subset B$.  Let $A\subset G$ be a unipotent subgroup which is normalized by $S$, and let $A^-=A\cap B^-$.


\begin{definition}
The pair $(A,B')$ an {\it $S$-factorization} of $B$ if the multiplication map $A \times B' \times A^- \to G$ is dominant onto some parabolic $Q\supset B$, and for some $S$-invariant function $f\in \C[B'] \setminus \{0\}$, with nonvanishing locus $B'_\circ = \{b\in B'\,|\, f(b)\neq 0\}$, the map
\[
  A \times B'_\circ \times A^- \to Q
\]
is smooth.
\end{definition}

The $S$-invariance of the function whose nonvanishing defines $B'_\circ$ will play an important role in \S\ref{s.gp}, but not before.

\begin{example}\label{ex.s-fact}
In $G=SL_4$, with the diagonal torus $T$ and upper-triangular Borel $B$, consider the subgroups
\begin{align*}
  B' &= \left(\begin{array}{cccc}a & b & d & e \\0 & c & 0 & f \\0 & 0 & g & 0 \\0 & 0 & 0 & h\end{array}\right)
  \intertext{and}
  A &= \left(\begin{array}{cccc}1 & 0 & 0 & 0 \\ u & 1 & 0 & 0 \\0 & 0 & 1 & v \\0 & 0 & 0 & 1\end{array}\right).
\end{align*}
The map $A\times B' \times A^- \to G$ is dominant onto
\[
 Q = \left(\begin{array}{cccc}* & * & * & * \\ * & * & * & * \\0 & 0 & * & * \\0 & 0 & 0 & *\end{array}\right),
\]
and restricting to $B'_\circ = \{d\neq 0\}$, the map is smooth.  It is not smooth on the subset $d=0$.

For $S=T$, the pair $(A,B')$ is not an $S$-factorization, since the coordinate function $d$ has nontrivial $T$-character.

On the other hand, consider the subtorus $S \isom \C^* = \mathrm{diag}(s,s^{-1},s,s^{-1}) \subset T$, with character $y_1-y_2=y_3-y_4$.  (Here $y_i$ picks off the $i$th diagonal entry.)  Now the $S$-character of the coordinate $d$ is zero, and we have an $S$-factorization.  
\end{example}

Fix an $S$-factorization $(A,B')$, with $B'_\circ \subset B$ the corresponding dense open set.  We have a morphism
\[
m\colon  A \times B'_\circ \times X^w \to X,
\]
by $(a,b',x) \mapsto ab'\cdot x$, and similarly,
\[
 \tilde{m} \colon A \times B'_\circ \times \tX^{\uw} \to X,
\]
by $(a,b',x) \mapsto ab'\cdot\phi(x)$.

\begin{lemma}\label{l.smthfact}
The morphism $m$ is flat with normal fibers, and the morphism $\tilde{m}$ is smooth.
\end{lemma}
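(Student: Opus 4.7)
The plan is to bootstrap from Proposition~\ref{p.smth1} applied to the parabolic $Q \supset B$ arising from the definition of $S$-factorization. By that proposition (extended to any parabolic containing $B$, as noted in the text), the multiplication maps
\[
  m_Q \colon Q \times X^w \to X, \qquad \tilde m_Q \colon Q \times \tX^{\uw} \to X
\]
are flat with normal fibers and smooth, respectively. I will show both $m$ and $\tilde m$ factor, up to a change of coordinates, through these maps via the smooth multiplication $\mu \colon A \times B'_\circ \times A^- \to Q$ supplied by the $S$-factorization hypothesis.

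\textbf{Step 1 (build up from $Q$).} Crossing $\mu$ with $X^w$ or $\tX^{\uw}$ and composing with $m_Q, \tilde m_Q$, the morphisms
\[
  M \colon A \times B'_\circ \times A^- \times X^w \to X, \quad (a,b',a^-,x) \mapsto ab'a^- \cdot x,
\]
and its Bott-Samelson analog $\tilde M$ are flat with normal fibers and smooth, respectively, as compositions of a smooth map with a flat-normal-fiber (resp.\ smooth) map.

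\textbf{Step 2 (absorb the $A^-$-factor).} Since $A^- = A \cap B^- \subset B^-$, it acts on $X^w$ (which is $B^-$-invariant), and likewise on $\tX^{\uw}$ (via the $B^-$-action inherited from the standard left $B$-action on Bott-Samelson varieties conjugated by $w_\circ$). Define the automorphism
\[
  \sigma \colon (a,b',a^-,x) \mapsto (a,b',a^-,(a^-)^{-1} \cdot x)
\]
of $A \times B'_\circ \times A^- \times X^w$. Then $M \circ \sigma(a,b',a^-,x) = ab'a^- \cdot (a^-)^{-1} x = ab' \cdot x = m \circ \pi(a,b',a^-,x)$, where $\pi$ is the projection forgetting the $A^-$-factor. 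The identical construction works for $\tilde M$ and $\tilde m$.

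\textbf{Step 3 (descend along $\pi$).} Since $\sigma$ is an automorphism, $m \circ \pi$ inherits flatness with normal fibers from $M$, and $\tilde m \circ \pi$ inherits smoothness from $\tilde M$. The projection $\pi$ is a trivial $A^-$-bundle, hence smooth and surjective. Flatness and smoothness descend under a smooth surjection; for the normal-fibers condition, the fiber of $m \circ \pi$ over $x \in X$ is $m^{-1}(x) \times A^-$, which is normal iff $m^{-1}(x)$ is normal (since $A^-$ is smooth). This yields the conclusion for $m$ and $\tilde m$.

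The main obstacle is the bookkeeping in Step~2: checking that $A^-$ genuinely acts on both $X^w$ and $\tX^{\uw}$, and that $\sigma$ correctly converts $M$ into $m \circ \pi$. Once that is in place, Steps~1 and 3 are a direct composition and descent argument from properties already established in Proposition~\ref{p.smth1}.
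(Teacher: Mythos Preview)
Your argument is correct and follows essentially the same route as the paper's proof. Both proofs factor through the known smoothness of $Q \times \tX^{\uw} \to X$ from Proposition~\ref{p.smth1}, compose with the smooth multiplication $A \times B'_\circ \times A^- \to Q$ from the $S$-factorization hypothesis, and then descend along the projection $\pi$ that forgets the $A^-$-factor, invoking \cite[\href{https://stacks.math.columbia.edu/tag/02K5}{Lemma 02K5}]{stacks}. Your Step~2 makes explicit the automorphism $\sigma$ needed to identify $M$ with $m \circ \pi$ (via the $B^-$-equivariance of $X^w$ and $\tX^{\uw}$), which the paper leaves implicit in asserting that its square commutes; your version is slightly more careful on this point, and also treats the flat-with-normal-fibers case for $m$ in parallel rather than dismissing it as analogous.
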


\begin{proof}
We will prove the second statement, the first being completely analogous.  Consider the diagram
\begin{equation}
\begin{tikzcd}
 A\times B'_\circ \times A^- \times \tX^w \ar[d,"f"] \ar[r,"p"] & A \times B' \times \tX^{\uw} \ar[d,"\tilde{m}"] \\
 Q \times \tX^{\uw} \ar[r,"g"] & X.
\end{tikzcd}
\end{equation}
Here $f$ is smooth, from the definition of $S$-factorization, and the map $g$ is smooth, by Proposition~\ref{p.smth1}.  So $g\circ f = \tilde{m}\circ p$ is smooth.  Since $p$ is smooth and surjective, being the projection away from $A^-$, it follows that $\tilde{m}$ is smooth (e.g., \cite[\href{https://stacks.math.columbia.edu/tag/02K5}{Lemma 02K5}]{stacks}).
\end{proof}

Next suppose we have a subvariety $Y \subset X$ which is invariant for $A$ and $S$.  Consider the fiber square
\begin{equation}
\begin{tikzcd}
 Z \ar[d,hook] \ar[r,"\mu"] \ar[dr,phantom,"\square"] & Y \ar[d,hook] \\
 A\times B'_\circ \times X^{w} \ar[r,"m"] & X.
\end{tikzcd}
\end{equation}
The inclusion of $Y$ and the action morphism $m$ are both $A$-equivariant, so $A$ also acts (equivariantly) on $Z$.  Because $A$ acts freely on $A\times B'_\circ \times X^{w}$, it follows that its action on $Z$ is also free, and that $Z \isom A\times Z'$, where $Z'=Z\cap (\{e\} \times B'_\circ \times X^w)$.  So the above diagram can be refined as follows:
\begin{equation}\label{e.square1}
\begin{tikzcd}
\{e\} \times Z'\ar[r,hook] \ar[d,hook] \ar[dr,phantom,"\square"] & A\times Z' \ar[d,hook] \ar[r,"\mu"] \ar[dr,phantom,"\square"] & Y \ar[d,hook] \\
\{e\} \times B'_\circ \times X^w \ar[r,hook]  & A\times B'_\circ \times X^{w} \ar[r,"m"] & X.
\end{tikzcd}
\end{equation}
By Lemma~\ref{l.smthfact}, $m$ is flat, and therefore $\mu$ is also flat.

Choosing an $A$- and $S$-equivariant desingularization $\psi\colon \tY \to Y$, we have a similar fiber diagram
\begin{equation}\label{e.square2}
\begin{tikzcd}
\{e\} \times \tZ'\ar[r,hook] \ar[d] \ar[dr,phantom,"\square"] & A\times \tZ' \ar[d] \ar[r,"\tilde\mu"]  \ar[dr,phantom,"\square"] & \tY \ar[d] \\
\{e\} \times B'_\circ \times \tX^{\uw} \ar[r,hook]  & A\times B'_\circ \times \tX^{\uw} \ar[r,"\tilde{m}"] & X.
\end{tikzcd}
\end{equation}
Since $\tilde{m}$ is smooth (by Lemma~\ref{l.smthfact} again), so is $\tilde\mu$.

\begin{lemma}\label{l.zprime}
With notation as in \eqref{e.square1} and \eqref{e.square2}, the induced map $f\colon \tZ' \to Z'$ is a desingularization, and we have
\[
  \dim Z' = \dim\tZ' = \dim B' + \dim Y - \ell(w).
\]
If $Y$ has rational singularities, so does $Z'$.
\end{lemma}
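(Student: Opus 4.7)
The plan is as follows. The first three assertions (smoothness of $\tZ'$, the dimension count, and properness/birationality of $f$) are direct consequences of the fiber-square structure in \eqref{e.square1}--\eqref{e.square2} together with Lemma~\ref{l.smthfact}. For smoothness: Lemma~\ref{l.smthfact} gives that $\tilde\mu$ is smooth, so $A \times \tZ'$ is smooth as a pullback of $\tY$ along a smooth morphism, whence $\tZ'$ is smooth (quotienting by the smooth factor $A$). For the dimension count: $\tilde m$ is smooth of relative dimension $\dim A + \dim B' - \ell(w)$ (using $\dim \tX^{\uw} = \dim X^w = \dim X - \ell(w)$), and this transfers to $\tilde\mu$ by base change, giving $\dim \tZ' = \dim Y + \dim B' - \ell(w)$. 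Since $m$ is flat of the same relative dimension, the same computation applies to $\mu$ to yield $\dim Z' = \dim \tZ'$. For properness and birationality of $f$: the map $\mathrm{id}_A \times f$ on $A \times \tZ' \to A \times Z'$ is naturally induced from the proper birational maps $\psi: \tY \to Y$ and $\phi: \tX^{\uw} \to X^w$ via the fiber product construction, and restricts to an isomorphism on the open dense subset where both $\psi$ and $\phi$ are isomorphisms.

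The subtler assertion is preservation of rational singularities. Since $A$ is smooth, it suffices to show $A \times Z'$ has rational singularities. My plan is to factor $\mathrm{id}_A \times f \colon A \times \tZ' \to A \times Z'$ through the intermediate variety $W := \tY \times_X V$, where $V = A \times B'_\circ \times X^w$. The map $W \to A \times Z'$ is the base change of $\psi$ along the morphism $\mu\colon A \times Z' \to Y$, which is flat because $m$ is; so by flat base change applied to the rational resolution property $R\psi_* \O_{\tY} = \O_Y$, we obtain $R(W \to A \times Z')_* \O_W = \O_{A \times Z'}$. The remaining map $A \times \tZ' \to W$ arises from the Bott--Samelson resolution $\phi\colon \tX^{\uw} \to X^w$ base-changed along $\tY \to X$; although this latter base change is not flat, one can use cohomology-and-base-change together with the strong rationality of the Bott--Samelson (whose fibers are connected and acyclic for the structure sheaf) to deduce the corresponding vanishing. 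Composing via Leray gives $Rf_* \O_{\tZ'} = \O_{Z'}$; combined with Cohen--Macaulayness of $A \times Z'$ (inherited from $Y$ via the flat family structure over $Y$ with normal, Cohen--Macaulay fibers coming from $m$), this shows $A \times Z'$ has rational singularities, and hence so does $Z'$.

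The main obstacle I anticipate is controlling the higher direct images along the Bott--Samelson base change in the last step: because $\tY \to X$ is not flat, one cannot invoke flat base change directly, and must exploit the especially strong rationality properties of the Bott--Samelson resolution to push cohomology-and-base-change through.
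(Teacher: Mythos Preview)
Your treatment of smoothness of $\tZ'$, the dimension count, and properness/birationality of $f$ is correct and matches the paper's brief argument.  The paper's proof of the rational singularities assertion is a one-line citation of \cite[Proposition~8.1]{agm}, so your attempt to actually write out an argument is already doing more than the paper does.

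The factorization through $W=\tY\times_X V$ is a good idea, and the step $W\to A\times Z'$ via flat base change along $\mu$ is exactly right.  The problem is your justification of the other step, $A\times\tZ'\to W$.  You correctly note that this is the base change of $\tilde V\to V$ (i.e., $\mathrm{id}\times\phi$) along the non-flat map $W\to V$, and then invoke ``cohomology-and-base-change'' using acyclicity of Bott--Samelson fibers.  This is not a valid argument: $\phi$ is not flat, so $\O_{\tilde V}$ is not flat over $V$, and Grauert-type theorems do not apply; fiberwise acyclicity by itself does not force the base-change map to be an isomorphism.  What you actually need is Tor-independence of $W$ and $\tilde V$ over $V$, so that derived base change reduces to ordinary base change.

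Fortunately this Tor-independence holds, and for a reason you have already used elsewhere: $\tilde m\colon\tilde V\to X$ is smooth, hence flat.  Since $m\colon V\to X$ is flat and $W=\tY\times_X V$, a change-of-rings computation gives
\[
  \mathrm{Tor}^{\O_V}_i(\O_W,\O_{\tilde V}) \cong \mathrm{Tor}^{\O_X}_i(\O_{\tY},\O_{\tilde V}) = 0 \quad (i>0),
\]
the last vanishing because $\O_{\tilde V}$ is flat over $\O_X$.  Tor-independent base change then yields $R(A\times\tZ'\to W)_*\O=\O_W$, and Leray finishes the argument.  A minor point: for the conclusion you need only normality of $A\times Z'$ (which follows from $\mu$ being flat with normal fibers over the normal base $Y$), not Cohen--Macaulayness; the fibers of $m$ are asserted to be normal in Lemma~\ref{l.smthfact}, not Cohen--Macaulay.
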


\begin{proof}
These statements closely parallel those of \cite[\S\S8--9]{agm}.  Since $\tilde\mu$ is smooth, $A\times \tZ'$ is nonsingular, and the map to $A \times Z'$ is proper and birational.  By \cite[Proposition~8.1]{agm}, $A\times Z'$ has rational singularities if $Y$ does.  The asserted properties of $\tZ'$ and $Z'$ follow.
\end{proof}

\section{The fiber group, sufficiently positive bases, and transversality}\label{s.gp}

The proofs of the results of \cite{agm} rely on the construction of a {\it fiber group}, which acts on the various fiber bundles over $\P$.  (This is a relatively simple instance of what is sometimes called a {\it gauge group} in mathematical physics.)  There is a group scheme $\mathcal{B} := \E S \times^S B \to \P$, where $S$ acts on $B$ by inverse conjugation.  Global sections form a connected algebraic group $\Gamma = \Hom(\P,\mathcal{B})$.  For each $x\in\P$, there is an evaluation homomorphism $\ev_x\colon \Gamma\to B$.  The group $\Gamma$ acts on $\cX$ through evaluation: $\gamma\cdot [x,gB] = [x,\ev_x(\gamma)\cdot gB]$.

The condition that the basis $\beta_1,\ldots,\beta_r$ be positive is equivalent to asking that $\ev_x$ be surjective for every $x\in\P$, that is, $\mathcal{B}$ is generated by global sections.  However, not every subtorus $S\subset T$ posseses a positive basis.  A major goal of this article is to develop a more flexible notion.

Let $B' = T\cdot U' \subset B$ be as in \S\ref{s.factor}.

\begin{definition}
A basis $\beta_1,\ldots,\beta_r$ for the character lattice of $S$ is {\it sufficiently positive} with respect to $B'$ if every character of $S$ acting on $U'$ is a nonnegative combination of the $\beta_i$.
\end{definition}

This is a weaker condition than positivity of the basis: we only ask that {\it some} of the positive roots restrict to nonnegative linear combinations of the $\beta_i$---namely, those roots which appear in $U'$.  In particular, any positive basis for $S$ is a sufficiently positive basis with respect to $B$, considered as a subgroup of itself.

The fiber group construction works for any $S$-normalized subgroup of $B$; in particular, applying it to $B'\subset B$, we have a subgroup scheme $\mathcal{B}'\subset \mathcal{B}$, and a subgroup of global sections $\Gamma'\subset\Gamma$.  Sufficient positivity guarantees that $\mathcal{B}'$ is generated by global sections, that is, the morphisms $\ev_x\colon \Gamma'\to B'$ are surjective for all $x\in\P$.

Given an open set $B'_\circ \subset B'$, let
\begin{equation}
\Gamma'_\circ = \bigcap_{x\in \P} \ev_x^{-1}(B'_\circ)
\end{equation}
be the subset of global sections of $\mathcal{B}'$ taking values in $B'_\circ$.  In general, $\Gamma'_\circ$ may be empty, but in the presence of $S$-factorization and sufficient positivity, the situation is good:

\begin{lemma}
Let $(A,B')$ be an $S$-factorization, with dense open $B'_\circ \subset B'$, and fix a sufficiently positive basis for $S$ with respect to $B'$.  Then $\Gamma'_\circ \subset \Gamma'$ is a dense open subset, and the evaluation map $\ev_x \colon \Gamma'_\circ \to B'_\circ$ is smooth and surjective for all $x\in\P$.
\end{lemma}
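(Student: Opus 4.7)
The plan is to leverage the $S$-invariance of $f$ in an essential way. Since $f\in\C[B']$ is $S$-invariant, the rule $[e,b]\mapsto f(b)$ descends to a well-defined regular function $\bar f$ on the group scheme $\mathcal{B}'=\E S\times^S B'$; the open subscheme $\mathcal{B}'_\circ:=\{\bar f\neq 0\}\subset\mathcal{B}'$ has fiber $B'_\circ$ over each $x\in\P$. For any section $\gamma\in\Gamma'$, the pullback $\gamma^*\bar f$ is then a regular function on the projective variety $\P$, hence constant; so $\gamma(x_0)\in\mathcal{B}'_\circ$ for some $x_0\in\P$ if and only if the same holds for every $x\in\P$. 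This yields the identification
\[
\Gamma'_\circ=\ev_{x_0}^{-1}(B'_\circ)
\]
for any choice of $x_0\in\P$, which in particular shows that $\Gamma'_\circ$ is open in $\Gamma'$.

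Next I would verify surjectivity and smoothness of the restricted evaluation. As noted in the discussion preceding the lemma, sufficient positivity already implies that $\ev_{x_0}\colon\Gamma'\to B'$ is surjective; being a surjective homomorphism of connected algebraic groups over $\C$, it is automatically smooth. Given any $b\in B'_\circ$, choose $\gamma\in\Gamma'$ with $\gamma(x_0)=b$; the constancy of $\gamma^*\bar f$ forces $\gamma\in\Gamma'_\circ$, so the restriction $\ev_{x_0}\colon\Gamma'_\circ\to B'_\circ$ remains surjective. Smoothness is preserved by the restriction to the open subschemes $\Gamma'_\circ$ and $B'_\circ$.

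Finally, density of $\Gamma'_\circ$ in $\Gamma'$ is then immediate: the preceding paragraph exhibits sections in $\Gamma'_\circ$ (so it is nonempty), and it is an open subset of the connected, hence irreducible, algebraic group $\Gamma'$. The one technical point that really needs care is the descent of $f$ to a regular function $\bar f$ on $\mathcal{B}'$ and the resulting constancy of $\gamma^*\bar f$ on $\P$; this is precisely the role played by the $S$-invariance of $f$ in the definition of an $S$-factorization. Once that descent is set up, every other assertion of the lemma follows essentially formally.
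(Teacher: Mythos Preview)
Your argument is correct and takes a genuinely different route from the paper's.  The paper proceeds structurally: it decomposes $B' = C(S) \times U''$, where $C(S)$ is the centralizer of $S$ in $B'$ and $U''\subset U'$ is the subgroup on which $S$ acts by nonzero characters.  The $S$-invariance of $f$ then forces $B'_\circ = C(S)_\circ \times U''$, and passing to mixing spaces and sections one finds $\Gamma' = C(S)\times\Gamma''$ and $\Gamma'_\circ = C(S)_\circ\times\Gamma''$, from which everything is read off directly.  Your approach instead exploits $S$-invariance by descending $f$ to a regular function $\bar f$ on $\mathcal{B}'$ and then using that any section $\gamma\colon\P\to\mathcal{B}'$ pulls $\bar f$ back to a \emph{constant} function on the projective variety $\P$; this yields the clean identity $\Gamma'_\circ=\ev_{x_0}^{-1}(B'_\circ)$ for a single point $x_0$, bypassing the explicit decomposition entirely.

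Each approach has its merits.  Yours is slicker and makes transparent \emph{why} the projectivity of $\P$ matters.  The paper's decomposition, on the other hand, yields the connectedness of $\Gamma'$ as an immediate byproduct (since $C(S)$ and $\Gamma''$ are visibly connected), whereas you invoke connectedness as a known fact in your final density argument.  This is not a gap---connectedness of the fiber group follows from the general construction in \cite{agm} applied to the connected solvable group $B'$, and the paper itself uses it freely later---but it is worth noting that the paper's structural proof is self-contained on this point while yours outsources it.
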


\begin{proof}
As a variety, $B' = C(S) \times U''$, where $Z(S) = \{ b\in B' \,|\, sbs^{-1}=b \}$ is the centralizer of $S$ in $B'$, and $U'' \subset U'$ is the closed subgroup where $S$ acts by nonzero characters.  Since $(A,B')$ is an $S$-factorization, $B'_\circ = \{f\neq 0\}$ for an $S$-invariant function $f$.  This means $B'_\circ = C(S)_\circ \times U''$, where $C(S)_\circ \subset C(S)$ is the nonvanishing locus of $f$ restricted to $C(S)$.  Since $S$ acts trivially on $C(S)$, we have
\[
  \mathcal{B}' = C(S) \times \mathcal{U}'',
\]
which evidently contains $\mathcal{B}'_\circ = C(S)_\circ \times \mathcal{U}''$ as a dense open subset.  Furthermore, letting $\Gamma'' = \Hom(\P,\mathcal{U}'')$, we have
\[
  \Gamma' = C(S) \times \Gamma'' \quad\text{and}\quad \Gamma'_\circ = C(S)_\circ \times \Gamma'',
\]
so $\Gamma'_\circ \subset \Gamma'$ is dense open, as claimed.

Since the chosen basis of characters of $S$ is sufficiently positive, each evaluation morphism $\ev_x \colon \Gamma' \to B'$ is surjective, and therefore smooth.  It follows that the restriction of $\ev_x$ to $\Gamma'_\circ$ is also smooth, and surjective onto $B'_\circ$.
\end{proof}

There is a morphism $\Gamma'\times\P \to \mathcal{B}'$ of group schemes over $\P$, defined by evaluating global sections.  By the above lemma, the corresponding map $\Gamma'_\circ \times\P \to \mathcal{B}'_\circ$ is smooth and surjective.

Fix a basepoint $x\in\P$, so we have the evaluation homomorphism $\ev_x \colon \Gamma'_\circ \to B'_\circ$.  Repurposing the letter $Z$, we have a fiber diagram
\begin{equation}\label{e.fiber-seq}
\begin{tikzcd}
Z \ar[d,hook] \ar[r,"q"] \ar[dr,phantom,"\square"]  & Z' \ar[d,hook] \ar[r,hook,"r"]  \ar[dr,phantom,"\square"] & A \times Z' \ar[d] \ar[r,"\mu"] \ar[dr,phantom,"\square"] & Y \ar[d,hook] \\
\Gamma'_\circ \times X^w \ar[r,"\ev_x"] & B'_\circ \times X^w \ar[r,hook] & A \times B'_\circ \times X^w \ar[r,"m"] & X.
\end{tikzcd}
\end{equation}
Let $\mu'\colon Z \to Y$ be the composition along the top sequence of arrows.

\begin{lemma}
Assume $Y$ is normal.  Let $\partial Y \subset Y$ be any $A$-invariant effective divisor, and consider its pullback $\partial Z = (\Gamma'_\circ \times X^w) \times_X \partial Y$.  Then $\O_Z(\partial Z) = (\mu')^*\O_Y(\partial Y)$.
\end{lemma}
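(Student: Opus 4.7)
The plan is to exploit the $A$-invariance of $\partial Y$ together with the flatness of $\mu: A \times Z' \to Y$ coming from Lemma~\ref{l.smthfact}, reducing first to an identity on $Z'$ and then pulling back along the smooth morphism $q$.

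First, I would establish $\bar\mu^*\O_Y(\partial Y) = \O_{Z'}(\partial Z')$, where $\bar\mu = \mu \circ r : Z' \to Y$ is the restriction of $\mu$ to $\{e\} \times Z'$ and $\partial Z' = \bar\mu^{-1}(\partial Y)$. To do so, I would compute $\mu^*\O_Y(\partial Y)$ two ways. Factor $\mu$ as $A\times Z' \xrightarrow{\id_A\times\bar\mu} A\times Y \xrightarrow{\alpha} Y$, where $\alpha$ is the action. The translation automorphism $(a,y)\mapsto(a, a\cdot y)$ of $A\times Y$ intertwines $\alpha$ with the projection $\mathrm{pr}_Y$, yielding $\alpha^*\O_Y(\partial Y) = \mathrm{pr}_Y^*\O_Y(\partial Y)$ and hence $\mu^*\O_Y(\partial Y) = \mathrm{pr}_{Z'}^*\bar\mu^*\O_Y(\partial Y)$. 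On the other hand, $A$-invariance of $\partial Y$ gives $\mu^{-1}(\partial Y) = A\times\partial Z'$ set-theoretically, and flatness of $\mu$ promotes this to an equality of Cartier divisors, so $\mu^*\O_Y(\partial Y) = \mathrm{pr}_{Z'}^*\O_{Z'}(\partial Z')$. Faithfully flat descent along $\mathrm{pr}_{Z'}$ then yields the desired identity on $Z'$.

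Second, the map $q: Z \to Z'$ is smooth, being a base change of the smooth evaluation $\ev_x : \Gamma'_\circ \to B'_\circ$. Pulling back the identity above via $q$ gives $(\mu')^*\O_Y(\partial Y) = q^*\bar\mu^*\O_Y(\partial Y) = q^*\O_{Z'}(\partial Z') = \O_Z(q^{-1}\partial Z') = \O_Z(\partial Z)$, since smooth pullback commutes with forming $\O(-)$ of a divisor and $q^{-1}\partial Z' = \partial Z$ by the construction of $\partial Z$ as a fiber product.

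The main obstacle is the Weil-versus-Cartier distinction: $\partial Y$ is only assumed effective on the normal $Y$, so $\O_Y(\partial Y)$ is a priori reflexive but not necessarily a line bundle, which makes the identification $\mu^*\O_Y(\partial Y) = \O_{A\times Z'}(\mu^{-1}\partial Y)$ delicate. I would handle this by restricting to the open $Y^\circ \subset Y$ on which $\partial Y$ is Cartier (whose complement in $Y$ has codimension $\geq 2$, by normality), running the line-bundle argument on its preimages in $A\times Z'$ and $Z$, and then extending the resulting isomorphism to all of $Z$ using normality of $Z$ (inherited from $Y$ through the flat-with-normal-fibers $\mu$ and the smooth $q$) together with the reflexivity of both sides.
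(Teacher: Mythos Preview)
Your proof is correct and follows the same three-step structure as the paper's own argument: use flatness of $\mu$ to pull $\O_Y(\partial Y)$ back to $A\times Z'$, invoke $A$-invariance of $\partial Y$ to reduce to an identity on $Z'$, and finish by pulling back along the smooth map $q$. The only cosmetic difference is that the paper passes from $A\times Z'$ to $Z'$ by restricting along the section $r$ at $e\in A$ (noting that $A$, being unipotent, is affine space as a variety), whereas you reach the same conclusion via the translation automorphism and faithfully flat descent; your explicit treatment of the Weil-versus-Cartier issue through reflexivity on the codimension-$\geq 2$ complement is more careful than the paper's, which leaves this point implicit.
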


\begin{proof}
For any $V \to Y$, we use the notation $\partial V$ to denote the pullback of $\partial Y$ to $V$.

By Lemma~\ref{l.smthfact}, $m$ is flat; hence so is $\mu$ and therefore $\O_{A\times Z'}(\partial(A\times Z')) = \mu^*\O_Y(\partial Y)$.

By $A$-invariance, we have $\partial(A\times Z') = A\times \partial Z'$.  Since $A$ is a unipotent group, it is isomorphic to affine space as a variety, and it follows that $\O_{Z'}(\partial Z') = r^*\O_{A\times Z'}(A \times \partial Z')$.

Finally, $\ev_x$ is smooth; hence so is $q$, and it follows that $\O_Z(\partial Z) = q^*\O_{Z'}(\partial Z')$.
\end{proof}

Applying the mixing space functor $\E S\times^S (\cdot)$ to the relevant part of Diagram~\eqref{e.fiber-seq}, and restricting to $\P_J$, we obtain another fiber diagram, in which we further recycle some notation for morphisms:
\begin{equation}\label{e.fiber2}
\begin{tikzcd}[back line/.style={densely dotted}]
& \cZ'_J  \ar{rd} \ar[hook,back line]{dd}  \\
\cZ_J \ar["q"]{ur}  \ar[crossing over,"\mu'"]{rr} \ar[hook]{dd} & & \cY_J \ar[hook, crossing over]{dd} \\
& \mathcal{B}_\circ'\times_\P \cX^w \ar[back line]{rd}  \\
\Gamma'_\circ \times \cX^w \ar["m'"]{rr} \ar[back line,"p"]{ur}   & & \cX 
\end{tikzcd}
\end{equation}
The map $p$ is induced by the map $\Gamma'_\circ \times\P \to \mathcal{B}'_\circ$, so both $p$ and $q$ are smooth and surjective.

\begin{lemma}\label{l.div-pullback}
Let $\partial Y \subset Y$ be an effective divisor which is invariant for $A$ and $S$, and let $\partial\cY_J \subset \cY_J$ and $\partial \cZ_J \subset \cZ_J$ be the induced divisors.  Then $\O_{\cZ_J}(\partial \cZ_J) = (\mu')^*\O_{\cY_J}(\partial \cY_J)$.
\end{lemma}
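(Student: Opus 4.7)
The plan is to lift the identity $\O_Z(\partial Z) = (\mu')^* \O_Y(\partial Y)$ from the preceding lemma to the mixing spaces, using $S$-equivariance of the entire construction.

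First I would check that every morphism in Diagram~\eqref{e.fiber-seq} is $S$-equivariant. The torus $S$ acts naturally on $X$, $X^w$, and $Y$; it acts on $A$ and $B'$ (hence on $B'_\circ$ and the relevant products) by conjugation, since both groups are $S$-normalized; and it acts on $\Gamma'_\circ \subset \Gamma' = \Hom(\P,\mathcal{B}')$ via its pointwise action on $\mathcal{B}'$. Each arrow in the diagram --- in particular $\ev_x$, $m$, and $\mu'$ --- intertwines these actions. Since $\partial Y$ is $S$-invariant by hypothesis, its pullback $\partial Z$ is also $S$-invariant, and the isomorphism $\O_Z(\partial Z) \cong (\mu')^* \O_Y(\partial Y)$ therefore lifts to an $S$-equivariant isomorphism.

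Next I would apply the mixing space functor $\E S \times^S (\cdot)$. This functor sends $S$-equivariant line bundles to ordinary line bundles on the mixing spaces and converts $S$-equivariant pullbacks into pullbacks on the mixing spaces, since the mixing space is the quotient of $\E S \times (\cdot)$ by a free $S$-action. The $S$-invariant divisors $\partial Y$ and $\partial Z$ give rise to the divisors $\partial \cY$ and $\partial \cZ$ over $\P$, and the equivariant identity yields $\O_{\cZ}(\partial \cZ) = (\mu')^* \O_{\cY}(\partial \cY)$, where $\mu'$ now denotes the mixing space version of the map, as in Diagram~\eqref{e.fiber2}. Restricting to $\P_J \subset \P$ gives the desired identity on $\cZ_J$ and $\cY_J$.

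The main thing to verify is the $S$-equivariance in the first step, but this is essentially bookkeeping, since all the group actions in question are defined naturally. If a more self-contained argument were desired, one could instead re-run the three-step proof of the preceding lemma directly in the mixing space setting: flatness of the mixing space version of $m$ is preserved under base change, the $A$-invariance of the divisor together with the affine structure of $A$ trivializes the line bundle along the $A$-direction, and smoothness of the mixing space version of $q$ (inherited from smoothness of $\ev_x$) handles the final descent. In either form, the argument is formal once one has the preceding lemma in hand; I do not see a genuine obstacle.
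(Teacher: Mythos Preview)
Your equivariance idea has a gap: the front face of Diagram~\eqref{e.fiber2}---which is what defines $\cZ_J$ and the map $\mu'$ in the lemma---is \emph{not} obtained by applying $\E S\times^S(\cdot)$ to Diagram~\eqref{e.fiber-seq}.  In \eqref{e.fiber2} the factor $\Gamma'_\circ$ appears untwisted (as $\Gamma'_\circ\times\cX^w$), and $m'$ sends $(\gamma,[e,\xi])$ to $[e,\ev_{[e]}(\gamma)\cdot\xi]$, evaluating $\gamma$ at the \emph{varying} point of $\P$.  Mixing \eqref{e.fiber-seq} instead produces $\E S\times^S(\Gamma'_\circ\times X^w)$ with a map still built from evaluation at the \emph{fixed} basepoint $x$; these sit differently over $\cX$, so the mixed isomorphism does not land on $\cZ_J$.  (Incidentally, the ``pointwise action of $S$ on $\mathcal{B}'$'' you invoke does not exist: forming $\mathcal{B}'=\E S\times^S B'$ already uses up the $S$-action.)

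The paper sidesteps this by arguing locally over $\P_J$: the map $\mu'\colon\cZ_J\to\cY_J$ is a morphism of fiber bundles over $\P_J$, and over each $p\in\P_J$ the fiber map is exactly the map $Z\to Y$ of the previous lemma with $\ev_p$ in place of $\ev_x$.  Since that lemma's proof is insensitive to the choice of basepoint, the line-bundle identity holds fiber by fiber, and the assertion is local.  Your alternative of rerunning the three-step argument on mixing spaces can be made rigorous, but it should go through the \emph{back} face of \eqref{e.fiber2} (which genuinely is the mixing of the $B'_\circ$-portion of \eqref{e.fiber-seq}) and then pull back along the smooth surjection $q$ to the front face; note that $q$ is not the mixing of $\ev_x$ but comes from the global evaluation $\Gamma'_\circ\times\P\to\mathcal{B}'_\circ$.
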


\begin{proof}
This follows from the previous lemma, since the statement is local, and $\cZ_J \to \cY_J$ is a map of fiber bundles over $\P_J$, whose fiberwise maps are isomorphic to $Z \to Y$.
\end{proof}

We conclude with some statements on dimensional transversality.  Choosing an equivariant desingularization $\tY \to Y$, we obtain a square
\begin{equation}\label{e.fiber3}
\begin{tikzcd}
\tcZ_J\ar[r,"\tilde\mu'"] \ar[d]  \ar[dr,phantom,"\square"]  & \tcY_J \ar[d] \\
 \Gamma'_\circ \times \tcX^w \ar[r,"\tilde{m}'"]   & \cX
\end{tikzcd}
\end{equation}
in the same way as the front face of Diagram~\eqref{e.fiber2}.

\begin{lemma}\label{l.czprime}
The induced map $f\colon \tcZ_J \to \cZ_J$ is a desingularization, and
\[
 \dim\tcZ_J = \dim \cZ_J = \dim \Gamma' + \dim Y + |J| -\ell(w).
\]
If $Y$ has rational singularities, so does $\cZ_J$.
\end{lemma}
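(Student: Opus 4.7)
\noindent
The plan is to carry out the mixing-space analogue of Lemma~\ref{l.zprime}, working fiberwise over $\P_J$. My first step is to observe that both $\cZ_J\to\P_J$ and $\tcZ_J\to\P_J$ are \'etale-locally trivial fiber bundles, with fibers the unmixed varieties $Z=(\Gamma'_\circ\times X^w)\times_X Y$ and $\tZ=(\Gamma'_\circ\times\tX^{\uw})\times_X\tY$ from Diagram~\eqref{e.fiber-seq}; this is because the fiber products defining $\cZ_J,\tcZ_J$ trivialize \'etale-locally on $\P_J$ to the corresponding products with $Z,\tZ$. Moreover, $f\colon\tcZ_J\to\cZ_J$ restricts fiberwise to the induced map $\tZ\to Z$.

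Fiberwise, from \eqref{e.fiber-seq} one has $Z\cong\Gamma'_\circ\times_{B'_\circ}Z'$ via the smooth surjection $\ev_x\colon\Gamma'_\circ\to B'_\circ$, and similarly $\tZ\cong\Gamma'_\circ\times_{B'_\circ}\tZ'$. By Lemma~\ref{l.zprime}, $\tZ'$ is smooth, the map $\tZ'\to Z'$ is a proper birational desingularization, and $\dim\tZ'=\dim B'+\dim Y-\ell(w)$. Pulling back along the smooth surjection $\ev_x$, I obtain that $\tZ$ is smooth, $\tZ\to Z$ is a proper birational desingularization, and
\[
\dim\tZ \,=\, \dim\tZ'+(\dim\Gamma'-\dim B') \,=\, \dim Y+\dim\Gamma'-\ell(w).
\]
Moreover, if $Y$ has rational singularities, then so does $Z'$ (by the proof of Lemma~\ref{l.zprime}), hence so does $Z$ via the smooth surjection $Z\to Z'$.

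Globally, $\tcZ_J$ is then smooth (a fiber bundle of smooth fibers over smooth $\P_J$), the map $f$ is proper and birational because it is fiberwise so, and hence $f$ is a desingularization of $\cZ_J$. The dimension formula of the lemma follows:
\[
\dim\tcZ_J \,=\, \dim\cZ_J \,=\, \dim\tZ+|J| \,=\, \dim Y+\dim\Gamma'+|J|-\ell(w).
\]
If $Y$ has rational singularities, the same holds for $\cZ_J$, since it is \'etale-locally a product of $Z$ with a smooth open of $\P_J$. The main obstacle is simply recognizing $\cZ_J,\tcZ_J$ as fiber bundles over $\P_J$ with fibers $Z,\tZ$; with that identification in hand, the statement reduces to Lemma~\ref{l.zprime} applied fiberwise and then propagated to the total spaces via the fiber bundle structure.
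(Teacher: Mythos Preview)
Your overall strategy---reduce to Lemma~\ref{l.zprime} via the bundle structure over $\P_J$---matches the paper's approach, but there is a gap in your first step.  You assert that $\cZ_J\to\P_J$ is \'etale-locally trivial with fiber $Z=(\Gamma'_\circ\times X^w)\times_X Y$, ``because the fiber products defining $\cZ_J$ trivialize \'etale-locally to the corresponding products.''  This is not automatic: after trivializing $\cX^w$, $\cY_J$, and $\cX$ over an open $U\subset\P_J$, the map $m'|_U\colon \Gamma'_\circ\times U\times X^w\to U\times X$ is $(\gamma,u,x)\mapsto (u,\ev_u(\gamma)\cdot x)$, which depends on $u$ through the varying evaluation homomorphism $\ev_u$.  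So the fiber over $u$ is $Z_u=\Gamma'_\circ\times_{B'_\circ,\ev_u}Z'$, and while these are all abstractly isomorphic, you have not shown the family is locally constant.  Establishing that would require producing local sections of $\Gamma'_\circ\times\P\to\mathcal{B}'_\circ$, which takes additional argument.

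The paper sidesteps this by working with $\cZ'_J$ instead of $Z$.  The back face of Diagram~\eqref{e.fiber2} realizes $\cZ'_J$ as the mixing space $\E S\times^S Z'$, which \emph{is} an honest fiber bundle over $\P_J$ with fiber $Z'$; then the smooth surjection $q\colon\cZ_J\to\cZ'_J$ (recorded just after \eqref{e.fiber2}) transfers the desired properties.  Concretely: by Lemma~\ref{l.zprime}, $Z'$ has dimension $\dim B'+\dim Y-\ell(w)$, is desingularized by $\tZ'$, and has rational singularities if $Y$ does; hence the same holds for the bundle $\cZ'_J$ (adding $|J|$ to the dimension), and then for $\cZ_J$ via $q$ (adding the relative dimension $\dim\Gamma'-\dim B'$).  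Your own observation ``$Z\cong\Gamma'_\circ\times_{B'_\circ}Z'$'' is exactly the fiberwise shadow of $q$; the fix is to use that relation globally rather than asserting local triviality of $\cZ_J\to\P_J$.
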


The proof is the same as that of Lemma~\ref{l.zprime}, using $\dim \cY_J = \dim Y + |J|$.

\begin{corollary}\label{c.intersect}
For a sufficiently general $\gamma \in \Gamma'_\circ$, the intersection $\cY_J \cap \gamma\cdot \cX^w$ is reduced of dimension $\dim Y + |J| -\ell(w)$.  If $Y$ is Cohen-Macaulay, or has rational singularities, then $\cY_J \cap \gamma\cdot \cX^w$ has the same property.
\end{corollary}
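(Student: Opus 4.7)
The plan is to identify the intersection $\cY_J \cap \gamma \cdot \cX^w$ with the fiber over $\gamma$ of the projection $\pi\colon \cZ_J \to \Gamma'_\circ$, where $\pi$ is the composite of the closed immersion $\cZ_J \hookrightarrow \Gamma'_\circ \times \cX^w$ visible in Diagram~\eqref{e.fiber2} with the first projection. Since the action of $\gamma$ gives an isomorphism $\cX^w \xrightarrow{\sim} \gamma\cdot\cX^w$, the scheme-theoretic fiber $\pi^{-1}(\gamma)$ is canonically identified with $\cY_J \cap \gamma\cdot\cX^w$, and the corollary reduces to a statement about generic fibers of $\pi$.

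For the dimension: by Lemma~\ref{l.czprime}, $\cZ_J$ is irreducible of dimension $\dim \Gamma' + \dim Y + |J| - \ell(w)$, and $\pi$ is dominant (surjectivity of $\ev_x\colon \Gamma'_\circ \to B'_\circ$ makes $m'$ surjective, so the image of $\pi$ is dense in $\Gamma'_\circ$), hence the generic fiber has dimension $\dim Y + |J| - \ell(w)$. For reducedness: Lemma~\ref{l.czprime} also exhibits $\tcZ_J$ as a smooth resolution of $\cZ_J$, so $\cZ_J$ is a reduced variety, and in characteristic $0$ generic smoothness applied to $\pi$ ensures that the generic fiber is in fact smooth.

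The Cohen-Macaulay and rational-singularities clauses require more. The key ingredient I would use is flatness of the action map $m'\colon \Gamma'_\circ \times \cX^w \to \cX$, obtained by applying the mixing-space functor to the flatness in Lemma~\ref{l.smthfact}, combined with the smoothness of $\Gamma'_\circ \times \P \to \mathcal{B}'_\circ$ supplied by sufficient positivity. Flat pullback along $m'$ then shows $\cZ_J$ inherits Cohen-Macaulayness (resp.\ rational singularities) from $\cY_J$, which in turn inherits the same from $Y$ via the smooth bundle $\cY_J \to \P_J$. Combining this with generic flatness of $\pi$ over the smooth base $\Gamma'_\circ$, the standard descent results---fibers of a flat morphism from a Cohen-Macaulay scheme to a regular base are Cohen-Macaulay, and the analogue for rational singularities in characteristic $0$ via Elkik---transfer the property to the generic fiber.

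The main delicate point, and the one not stated explicitly in the excerpt, is the flatness of $m'$: it is the expected mixing-space counterpart of Lemma~\ref{l.smthfact}, and its verification amounts to tracking how evaluation at a basepoint interacts with the mixing construction and the $\Gamma'_\circ$-action. Once that is in hand, the remaining steps are routine applications of generic smoothness, generic flatness, and standard descent theorems for Cohen-Macaulay and rational singularities along flat morphisms.
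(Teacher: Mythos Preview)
Your overall architecture matches the paper's: identify $\cY_J \cap \gamma\cdot\cX^w$ with $\pi^{-1}(\gamma)$ for $\pi\colon \cZ_J \to \Gamma'_\circ$, and then read off properties of the generic fiber from properties of $\cZ_J$. The paper does exactly this, citing Lemma~\ref{l.czprime} together with Brion's Lemmas~1 and~3 for the descent step.

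There are two problems with your execution. First, a minor one: you cannot conclude the generic fiber of $\pi$ is \emph{smooth}; generic smoothness requires a smooth source, and $\cZ_J$ is only known to be a variety. Reducedness of the generic fiber follows already from $\cZ_J$ being integral (in characteristic~$0$ the generic fiber of a dominant morphism of varieties is geometrically reduced).

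Second, and more seriously, your route to Cohen--Macaulayness and rational singularities of $\cZ_J$ goes through flatness of $m'\colon \Gamma'_\circ \times \cX^w \to \cX$, and this is not established. Lemma~\ref{l.smthfact} gives flatness of $A \times B'_\circ \times X^w \to X$, \emph{not} of $B'_\circ \times X^w \to X$; mixing and composing with the smooth evaluation map does not yield flatness of $m'$. Indeed, the paper explicitly flags in \S\ref{s.vanishing} that ``we do not have access to certain techniques, e.g., using flatness of the action morphisms,'' precisely because $\Gamma'$ is smaller than the full fiber group. The correct argument routes through the factorization in Diagram~\eqref{e.fiber2}: $q\colon \cZ_J \to \cZ'_J$ is smooth and surjective, and $\cZ'_J$ is a bundle with fiber $Z'$, whose properties come from Lemma~\ref{l.zprime} (via the flat map $\mu$ with source $A\times Z'$, then passing to the slice $\{e\}\times Z'$). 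This is what Lemma~\ref{l.czprime} packages. For the descent to the general fiber, the paper invokes Brion's Lemmas~1 and~3, which require only that $\cZ_J$ have the desired property and that the target $\Gamma'_\circ$ be smooth---no flatness of $\pi$ or of $m'$ is needed.
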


\begin{proof}
Consider the diagram
\begin{equation}
\begin{tikzcd}
 & \cZ_J\ar[r,"\mu'"] \ar[d,hook] \ar[dl,"\pi",swap] \ar[dr,phantom,"\square"]  & \cY_J \ar[d,hook] \\
\Gamma'_\circ & \Gamma'_\circ \times \cX^w \ar[l] \ar[r,"m'"]   & \cX.
\end{tikzcd}
\end{equation}
The morphism $\pi$ is surjective, with fiber $\pi^{-1}(\gamma) = \cY_J \cap \gamma\cdot \cX^w$.  By Lemma~\ref{l.czprime} and the theorem on dimension of fibers, the assertions on dimension and reducedness follow.  Cohen-Macaulayness and rational singularities likewise follow, e.g. by \cite[Lemmas~1 and 3]{brion}.
\end{proof}

\section{Positivity theorems}\label{s.transverse}

Now we come to the main theorems.  First we fix some notation and hypotheses. which will be in force throughout this section (and for the rest of Part I).

\begin{itemize}[label=\bull,itemsep=5pt]

\item We fix an $S$-factorization $(A,B')$ for $B$, and a basis $\{\beta_1,\ldots,\beta_r\}$ for $S\subseteq T$ which is sufficiently positive.

\item $Y \subset X$ is a closed subvariety which is invariant for both $S$ and $A$.

\item $\partial=\partial Y \subset Y$ is a Cohen-Macaulay effective divisor which supports an ample line bundle, and is also invariant for $S$ and $A$.

\item Given a general element $\gamma \in \Gamma_\circ$, let $\O_{\cY_J \cap \gamma\cdot \cX^w}(-\partial)$ be the ideal sheaf of $\partial \cY_J$ restricted to $\cY_J \cap \gamma\cdot \cX^w$.

\item Similarly, $\O_{\cY_J \cap \gamma\cdot \cX^w}(-\partial^w_\gamma)$ is the ideal sheaf of $\gamma\cdot\partial \cX^w$ restricted to $\cY_J \cap \gamma\cdot \cX^w$.
\end{itemize}

Next, we define coefficients $c_{w,J}$ by writing
\begin{equation}
  [Y] = \sum_{w,J} c_{w,J} \beta_1^{j_1}\cdots\beta_r^{j_r} \cdot [X_w]
\end{equation}
in $H_S^*(X)$.  In K-theory, we define coefficients $d_{w,J}$ and $p_{w,J}$ by writing
\begin{equation}
  [\O_Y(-\partial)] = \sum_{w,J} d_{w,J} (1-\ee^{-\beta_1})^{j_1} \cdots (1-\ee^{-\beta_r})^{j_r} \cdot \xi_w
\end{equation}
and
\begin{equation}
  [\O_Y] = \sum_{w,J} p_{w,J} (1-\ee^{-\beta_1})^{j_1} \cdots (1-\ee^{-\beta_r})^{j_r} \cdot \O_w
\end{equation}
in $K_S(X)$.  So $c_{w,J}$, $d_{w,J}$, and $p_{w,J}$ are integers.

\begin{lemma}\label{l.push}
Let $\int_{\cX}\colon H^*(\cX) \to \Z$ and $\chi\colon K(\cX) \to \Z$ be pushforwards to a point.  For a general $\gamma\in\Gamma_\circ$, we have
\begin{align*}
  c_{w,J} &= \int_{\cX}[ \cY_J \cap \gamma\cdot \cX^w ] , \\
  d_{w,J} &= \chi\left(  \cY_J \cap \gamma\cdot \cX^w ,\,  \O_{\cY_J \cap \gamma\cdot \cX^w}(-\partial-\partial_J)\right), &\text{and} \\
  p_{w,J} &= \chi\left(  \cY_J \cap \gamma\cdot \cX^w ,\,  \O_{\cY_J \cap \gamma\cdot \cX^w}(-\partial_\gamma^w-\partial_J)\right).
\end{align*}
\end{lemma}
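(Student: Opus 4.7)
\medskip
\noindent\textbf{Proof proposal.} The plan is to start from the push-forward formulas \eqref{e.c1}, \eqref{e.d1}, and \eqref{e.p1}, which already express each coefficient as a triple product in $H^*(\cX)$ or $K(\cX)$ integrated over $\cX$, and then rewrite each triple product as the fundamental class (respectively, structure sheaf) of the scheme-theoretic intersection $\cY_J \cap \gamma\cdot\cX^w$ for a general $\gamma \in \Gamma'_\circ$. Three moves are needed in each case: absorb $[\P_J]$ into $[\cY]$ to get $[\cY_J]$; replace $[\cX^w]$ (or its K-theoretic analogue) by its $\gamma$-translate using $\Gamma'$-invariance; and apply transversality to identify the remaining product with the intersection class.

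For $c_{w,J}$, I begin with $c_{w,J} = \int_{\cX}[\cY]\cdot[\P_J]\cdot[\cX^w]$ from \eqref{e.c1}. Since $[\P_J]$ is pulled back along the flat bundle projection $\rho\colon \cX\to\P$, and $\cY_J = \cY\cap\rho^{-1}(\P_J)$ meets $\rho^{-1}(\P_J)$ in the expected codimension, I would first write $[\cY]\cdot[\P_J]=[\cY_J]$. Next, as noted in \S\ref{s.gp}, the decomposition $\Gamma' = C(S)\times\Gamma''$ shows $\Gamma'$, and hence $\Gamma'_\circ$, is connected, so the action of any $\gamma$ on $\cX$ is trivial on cohomology and $[\cX^w]=[\gamma\cdot\cX^w]$. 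Finally, Corollary~\ref{c.intersect} gives that for general $\gamma$ the intersection $\cY_J\cap\gamma\cdot\cX^w$ is reduced of the expected dimension, so the cup product equals the intersection class; pushforward produces the first formula.

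For $d_{w,J}$ and $p_{w,J}$ the argument runs parallel, but each equality must be established at the level of derived tensor products. Lemma~\ref{l.tor0} supplies the Tor-vanishing needed to combine the $\cY$-class with the $\P_J$-class, giving
\[
 [\O_{\cY}(-\partial)]\cdot[\O_J(-\partial_J)] = [\O_{\cY_J}(-\partial-\partial_J)] \quad\text{and}\quad [\O_{\cY}]\cdot[\O_J(-\partial_J)] = [\O_{\cY_J}(-\partial_J)]
\]
in $K(\cX)$. Connectedness of $\Gamma'$ again permits the substitution $[\O_{\cX^w}] \rightsquigarrow [\O_{\gamma\cdot\cX^w}]$ (and $\xi^w \rightsquigarrow \gamma\cdot\xi^w$). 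The final multiplication against the (twisted) Schubert sheaf follows from the Cohen--Macaulayness of $\cY_J\cap\gamma\cdot\cX^w$ at the expected codimension, as supplied by Corollary~\ref{c.intersect}, together with the standard proper-intersection vanishing \cite[Lemmas~1 and 3]{brion}; pushforward then yields the Euler-characteristic expressions.

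The hard part will be the K-theoretic transversality step: verifying that the derived intersections really do reduce to ordinary structure sheaves on $\cY_J\cap\gamma\cdot\cX^w$, with no higher Tor contributions, and that the auxiliary divisors $\partial$, $\partial_J$, and $\partial^w_\gamma$ restrict cleanly (without spurious torsion) through the whole chain of products. This is exactly what Corollary~\ref{c.intersect} and Lemma~\ref{l.tor0} were engineered to provide under the hypotheses in force on $Y$ and $\partial Y$; once these inputs are accepted, the rest of the proof is bookkeeping with the three substitution steps described above.
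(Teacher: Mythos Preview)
Your proposal is correct and follows essentially the same approach as the paper: start from \eqref{e.c1}, \eqref{e.d1}, \eqref{e.p1}, absorb $[\P_J]$ into $[\cY]$ to get $[\cY_J]$, use connectedness of $\Gamma'$ to replace $[\cX^w]$ by its $\gamma$-translate, and then invoke Corollary~\ref{c.intersect} (together with Lemma~\ref{l.tor0} and \cite[Lemma~1]{brion} in the K-theoretic cases) to identify the remaining product with the intersection class. The paper's proof is slightly more terse but structurally identical.
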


Here, as in \S\ref{ss.mixing}, we use the notation $\partial_J$ also to refer to the pullback of the divisor $\partial\P_J$ under the projection $\cY_J \cap \gamma\cdot \cX^w \to \P_J$.

\begin{proof}
The group $\Gamma'$ is connected, so for any $\gamma\in\Gamma'$, we have $[\cX^w] = [\gamma\cdot \cX^w]$, $[\O_{\cX^w}] = [\O_{\gamma\cdot \cX^w}]$, and $[\O_{\cX^w}(-\partial^w)] = [\O_{\gamma\cdot \cX^w}(-\partial^w_\gamma)]$.

The statement for $c_{w,J}$ follows from Poincar\'e duality (Eq.~\eqref{e.c1}), once we show $[\cY]\cdot[\P_J]\cdot[\cX^w] =  [ \cY_J \cap \gamma\cdot \cX^w ]$.  To see this equality, first recall that $\cY_J = \cY \cap \rho^{-1}\P_J$, where $\rho\colon \cX \to \P$ is the fiber bundle projection.  This intersection is evidently transverse, so $[\cY]\cdot[\P_J] = [ \cY_J ]$.  Now by Corollary~\ref{c.intersect}, we have $\dim(\cY_J \cap \gamma\cdot \cX^w) = \dim\cY_J - \ell(w)$, and therefore $[\cY_J]\cdot[\cX^w] =  [ \cY_J \cap \gamma\cdot \cX^w ]$, as required.

The statements in K-theory are proved in the same way, using Eqs.~\eqref{e.d1} and \eqref{e.p1}, and invoking Lemma~\ref{l.tor0} and \cite[Lemma~1]{brion} to deduce the equalities $[\O_{\cY}(-\partial)] \cdot[\O_J(-\partial_J)] \cdot [\O_{\cX^w}] = [\O_{\cY_J \cap \gamma\cdot \cX^w}(-\partial-\partial_J)]$ and $[\O_{\cY}] \cdot[\O_J(-\partial_J)]\cdot [\O_{\cX^w}(-\partial^w)] = [\O_{\cY_J \cap \gamma\cdot \cX^w}(-\partial^w_\gamma-\partial_J)]$ from the dimension formulas.
\end{proof}

The positivity theorem in cohomology is an immediate consequence:
\begin{theorem}\label{t.mainH}
For a general element $\gamma\in \Gamma'_\circ$, we have
\[
  c_{w,J} = \#( \cY_J \cap \gamma\cdot \cX^w )
\]
when the RHS is of expected dimension $0$ (and $c_{w,J} =0$ otherwise).  In particular, writing
\[
  [Y] = \sum_{w} c_{w} \cdot [X_w].
\]
in $H_S^*(X)$, we have $c_w \in \Z_{\geq0}[\beta_1,\ldots,\beta_r]$.
\end{theorem}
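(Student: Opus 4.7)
The plan is to reduce the statement to an honest enumerative problem via Lemma~\ref{l.push}, then observe that point-counts are manifestly nonnegative. Nearly all of the work has been done in the preceding sections; this theorem should fall out by combining two facts that are already in hand.

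First I would invoke Lemma~\ref{l.push}, which for a general $\gamma \in \Gamma'_\circ$ identifies
\[
  c_{w,J} = \int_{\cX} [\cY_J \cap \gamma\cdot\cX^w].
\]
The pushforward of a cohomology class to a point sees only the top-degree component, so only the zero-dimensional case contributes. Next I would invoke Corollary~\ref{c.intersect} to control the dimension of this intersection: for generic $\gamma$, it is reduced of dimension $\dim Y + |J| - \ell(w)$ when this quantity is nonnegative, and empty otherwise. Consequently $c_{w,J}$ vanishes unless $|J| = \ell(w) - \dim Y$, in which case the intersection is a finite disjoint union of reduced points and $c_{w,J}$ is literally their cardinality---a nonnegative integer. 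Summing the equality $c_w = \sum_J c_{w,J}\,\beta_1^{j_1}\cdots\beta_r^{j_r}$ across multi-indices then gives $c_w \in \Z_{\geq 0}[\beta_1,\ldots,\beta_r]$.

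The main technical obstacle---producing, for a sufficiently generic group element, a translated Schubert variety that meets $\cY_J$ dimensionally transversally---has already been absorbed into the preceding sections. The role of the sufficient positivity hypothesis, together with the $S$-factorization, is precisely to guarantee that $\Gamma'_\circ$ surjects onto $B'_\circ$ under every evaluation $\ev_x$, so that the Kleiman-style generic transversality of Corollary~\ref{c.intersect} applies fiberwise over $\P$. Given that input, no new geometric construction is required: the theorem is essentially a packaging of Lemma~\ref{l.push} and Corollary~\ref{c.intersect} together with the trivial observation that cardinalities are nonnegative.
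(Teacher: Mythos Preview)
Your proposal is correct and matches the paper's own proof essentially line for line: invoke Lemma~\ref{l.push} to identify $c_{w,J}$ as the pushforward of an effective cycle, then use Corollary~\ref{c.intersect} to see that for general $\gamma$ the intersection is reduced of the expected dimension, so the pushforward is either zero or a point count. The paper phrases the reducedness step slightly more tersely (``for sufficiently general $\gamma$, this intersection is reduced''), but the logic is identical.
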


\begin{proof}
By Lemma~\ref{l.push}, we know $c_{w,J}$ is the degree of the effective $0$-cycle $[ \cY_J \cap \gamma\cdot \cX^w ]$, or is zero when the expected dimension of this cycle is not $0$.  (In fact this is already enough to conclude positivity.)  For sufficiently general $\gamma$, this intersection is reduced, so degree is equal to number of points.
\end{proof}

Positivity in K-theory comes in two flavors:

\begin{theorem}\label{t.mainK}
With notation and hypotheses as above, assume also that $Y$ has rational singularities.

\begin{enumerate}[(1)]
\item For a general element $\gamma\in \Gamma'_\circ$, we have
\[
  (-1)^{\dim Y-\ell(w)+|J|} d_{w,J} = \dim H^{\dim Y+|J|-\ell(w)}\big( \cY_J \cap \gamma\cdot \cX^w, \, \O_{\cY_J \cap \gamma\cdot \cX^w}(-\partial-\partial_J) \big).
\]
In particular, $(-1)^{\dim Y-\ell(w)+|J|} d_{w,J} \in \Z_{\geq 0}$. \label{mainK-1}

\item For a general element $\gamma\in \Gamma'_\circ$, we have
\[
  (-1)^{\dim Y-\ell(w)+|J|} p_{w,J} = \dim H^{\dim Y+|J|-\ell(w)}\big( \cY_J \cap \gamma\cdot \cX^w, \, \O_{\cY_J \cap \gamma\cdot \cX^w}(-\partial^w_\gamma-\partial_J) \big).
\]
In particular, $(-1)^{\dim Y-\ell(w)+|J|} p_{w,J} \in \Z_{\geq 0}$. \label{mainK-2}
\end{enumerate} 
\end{theorem}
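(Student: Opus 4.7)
The plan is to follow the same transversality strategy as Theorem~\ref{t.mainH}, upgrading the signed count to a signed cohomology dimension by applying a Kawamata-Viehweg--type vanishing, in close parallel to the proof of the main theorem of \cite{agm}. The first step is Lemma~\ref{l.push}: both $d_{w,J}$ and $p_{w,J}$ are Euler characteristics on the intersection $V := \cY_J \cap \gamma\cdot\cX^w$ of sheaves of the form $\O_V(-D)$, where $D = \partial\cY_J + \partial_J$ in case~\ref{mainK-1} and $D = \partial^w_\gamma + \partial_J$ in case~\ref{mainK-2}. By Corollary~\ref{c.intersect}, for sufficiently general $\gamma \in \Gamma'_\circ$, the scheme $V$ is reduced of dimension $d := \dim Y + |J| - \ell(w)$, has rational singularities, and is in particular Cohen-Macaulay.

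The entire theorem then reduces to the cohomology vanishing
\[
  H^i\bigl(V,\, \O_V(-D)\bigr) = 0 \quad \text{for every } i < d,
\]
since then $\chi(V, \O_V(-D)) = (-1)^d \dim H^d(V, \O_V(-D))$, and substituting into Lemma~\ref{l.push} gives both identities and the asserted signs. By Serre duality on the Cohen-Macaulay variety $V$, the vanishing is equivalent to $H^j(V, \omega_V(D)) = 0$ for $j > 0$, which is of Kodaira type once $\O_V(D)$ is shown to be ample.

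For case~\ref{mainK-1}, the hypothesis that $\partial Y$ supports an ample line bundle on $Y$ transfers through the fiber bundle $\cY_J \to \P_J$ to give relative ampleness of $\O(\partial\cY_J)$; twisting with the pullback of the ample divisor $\partial\P_J$ on the base produces an ample line bundle on $\cY_J$, which remains ample upon restriction to the closed subscheme $V$. For case~\ref{mainK-2} one uses the standard fact that $\partial X^w$ supports an ample line bundle on $X^w$ (cf.\ \cite{agm,brion}), transferred through the same fiber-bundle argument applied to the $\Gamma'$-translate $\gamma\cdot\cX^w$. Having ampleness in hand, the vanishing follows by applying Kawamata-Viehweg to the smooth desingularization $\tilde V$ of $V$ (obtained as a generic fiber of the smooth family $\tcZ_J\to\Gamma'_\circ$ implicit in Diagram~\eqref{e.fiber3}) and pushing forward via Grauert-Riemenschneider, using the rational-singularities conclusion of Corollary~\ref{c.intersect} to identify $\psi_*\omega_{\tilde V} = \omega_V$.

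The main obstacle is the ampleness verification: the hypotheses deliver only fiberwise ampleness, so producing genuine ampleness on the total space---and confirming it survives restriction to $V$ and $\Gamma'$-translation---requires a careful combination of the relatively ample divisor with the pullback of an ample divisor from $\P_J$. Once this is secured, the Tor-vanishing of Lemma~\ref{l.tor0} and the pullback compatibility of Lemma~\ref{l.div-pullback} package the remaining sheaf-theoretic bookkeeping in direct analogy with \cite[Theorem~4.1]{agm}.
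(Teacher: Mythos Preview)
Your outline is correct and matches the paper's approach: reduce via Lemma~\ref{l.push} to an Euler characteristic, then deduce the sign from a Serre-dual Kawamata--Viehweg vanishing on a resolution, pushed forward using Grauert--Riemenschneider and the rational singularities of the intersection.  The paper organizes the vanishing argument on the total family $\pi\colon\cZ_J \to \Gamma'_\circ$ (proving $R^i\pi_*\omega_{\cZ_J}(\partial+\partial_J)=0$ in \S\ref{s.vanishing} and then restricting to a general fiber) rather than directly on a single fiber $V$, but this is only an organizational difference.  One genuine subtlety your sketch elides: since $\partial Y$ is only assumed to \emph{support} an ample line bundle and need not be Cartier, $\O_V(D)$ is in general not invertible, so neither ``$\O_V(D)$ is ample'' nor the naive projection formula $\psi_*(\omega_{\tilde V}\otimes\psi^*\O_V(D))=\omega_V(D)$ is available as stated.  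The paper circumvents this by choosing the resolution $\tilde Y\to Y$ so that $\partial Y$ pulls back to a simple normal crossings divisor $\tilde\partial$, and then establishing the pushforward identity $f_*\omega_{\tcZ_J}(\tilde\partial+\partial_J)=\omega_{\cZ_J}(\partial+\partial_J)$ directly (Lemma~\ref{l.push1}), with Lemma~\ref{l.div-pullback} supplying exactly the compatibility you cite.
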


The proof follows the same basic pattern as that of \cite[Theorem~4.2]{agm}, which in turn was based on \cite{brion}.  By Lemma~\ref{l.push}, the K-theoretic coefficient $d_{w,J}$ is equal to the sheaf Euler characteristic of $\O_{\cY_J \cap \gamma\cdot \cX^w}(-\partial)$.  To prove the theorem, we apply Kawamata-Viehweg vanishing to deduce the vanishing of sheaf cohomology of $\O_{\cY_J \cap \gamma\cdot \cX^w}(-\partial)$ in degrees other than $\dim Y-\ell(w)+|J|$.  This is carried out in the next section (Theorem~\ref{t.vanish}).

Before turning to the technicalities of the proof, we record an immediate consequence, which is our main application.

\begin{corollary}\label{c.mainK}
Assume the above hypotheses, including that $Y$ has rational singularities.  Write
\[
  [\O_Y(-\partial)] = \sum_{w} d_{w} \cdot \xi_w
\]
and
\[
  [\O_Y] = \sum_{w} p_{w} \cdot \O_w
\]
in $K_S(X)$.  Then
\[
  (-1)^{\dim Y - \ell(w)} d_w \quad \text{and} \quad (-1)^{\dim Y - \ell(w)} p_w
\]
lie in $\Z_{\geq 0}[\ee^{-\beta_1}-1,\ldots,\ee^{-\beta_r}-1]$.
\end{corollary}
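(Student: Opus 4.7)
The plan is to deduce this directly from Theorem~\ref{t.mainK} by repackaging the finer expansion indexed by pairs $(w,J)$ into the coarser one indexed by $w$ alone. Since $\{\xi_w\}$ is an $R(S)$-basis for $K_S(X)$, and since the monomials $\prod_i (1-\ee^{-\beta_i})^{j_i}$ form a $\Z$-basis for the subring $\Z[\ee^{-\beta_1},\ldots,\ee^{-\beta_r}] \subset R(S)$, comparing the two expansions of $[\O_Y(-\partial)]$ yields
\[
d_w \,=\, \sum_J d_{w,J}\,(1-\ee^{-\beta_1})^{j_1}\cdots(1-\ee^{-\beta_r})^{j_r}
\]
in $R(S)$; in particular, $d_w$ automatically lies in $\Z[\ee^{-\beta_1},\ldots,\ee^{-\beta_r}] = \Z[\ee^{-\beta_1}-1,\ldots,\ee^{-\beta_r}-1]$, so the containment in the right ring is built in.

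The key step is then a sign-bookkeeping rewrite: substitute $1-\ee^{-\beta_i} = -(\ee^{-\beta_i}-1)$ in each factor, so that each monomial acquires a sign $(-1)^{|J|}$, and then multiply through by the global sign $(-1)^{\dim Y - \ell(w)}$. The coefficient on $\prod_i (\ee^{-\beta_i}-1)^{j_i}$ thus becomes $(-1)^{\dim Y - \ell(w) + |J|}\,d_{w,J}$, which is precisely the quantity that Theorem~\ref{t.mainK}(\ref{mainK-1}) certifies to be a nonnegative integer. This exhibits $(-1)^{\dim Y - \ell(w)} d_w$ as a nonnegative integer combination of monomials in the $\ee^{-\beta_i}-1$, which is exactly the desired conclusion.

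The corresponding statement for $p_w$ follows by the identical argument, with $\O_w$ in place of $\xi_w$ and part~(\ref{mainK-2}) of Theorem~\ref{t.mainK} in place of part~(\ref{mainK-1}). There is no substantive obstacle at this stage, since all the geometric content—the transversality setup for $\cY_J \cap \gamma\cdot\cX^w$ from \S\ref{s.gp} together with the Kawamata--Viehweg-type vanishing to be established in the next section—is already packaged inside Theorem~\ref{t.mainK}. The only thing that requires mild care is matching the sign $(-1)^{|J|}$ produced by the change of variables against the $(-1)^{|J|}$ already present in the certification provided by Theorem~\ref{t.mainK}; once those cancel against one another, the residual sign $(-1)^{\dim Y - \ell(w)}$ is exactly what the corollary asserts.
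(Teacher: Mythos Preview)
Your argument is correct and is precisely the intended one: the paper presents Corollary~\ref{c.mainK} as an immediate consequence of Theorem~\ref{t.mainK}, and the passage from the $(w,J)$-indexed expansion to the $w$-indexed one via the substitution $(1-\ee^{-\beta_i}) = -(\ee^{-\beta_i}-1)$ is exactly the repackaging the paper has in mind (indeed, the same rephrasing already appears in \S\ref{ss.eqK} following the statement of the AGM theorem). There is nothing to add.
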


\section{Vanishing theorems}\label{s.vanishing}

We continue to assume all the hypotheses of the previous section, including that $Y$ has rational singularities.  
Theorem~\ref{t.mainK} follows from a statement about vanishing of sheaf cohomology.  The goal now is to prove the following:

\begin{theorem}\label{t.vanish}
Fix a general element $\gamma\in\Gamma'_\circ$.
\begin{enumerate}[(1)]
\item For all $i<\dim(\cY_J \cap \gamma\cdot\cX^w) = \dim Y +|J| - \ell(w)$, we have
\[
  H^i\big( \cY_J \cap \gamma\cdot\cX^w, \, \O_{\cY_J \cap \gamma\cdot\cX^w}(-\partial-\partial_J) \big) = 0. 
\]
Equivalently, for all $i>0$, we have
\[
   H^i\big( \cY_J \cap \gamma\cdot\cX^w, \, \omega_{\cY_J \cap \gamma\cdot\cX^w}(\partial+\partial_J) \big) = 0. 
\]

\item For all $i<\dim(\cY_J \cap \gamma\cdot\cX^w) = \dim Y +|J| - \ell(w)$, we have
\[
  H^i\big( \cY_J \cap \gamma\cdot\cX^w, \, \O_{\cY_J \cap \gamma\cdot\cX^w}(-\partial_\gamma^w-\partial_J) \big) = 0. 
\]
Equivalently, for all $i>0$, we have
\[
   H^i\big( \cY_J \cap \gamma\cdot\cX^w, \, \omega_{\cY_J \cap \gamma\cdot\cX^w}(\partial_\gamma^w+\partial_J) \big) = 0. 
\]
\end{enumerate}
\end{theorem}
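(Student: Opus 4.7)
The plan is to reduce both parts to Kawamata-Viehweg vanishing on a desingularization of $V := \cY_J \cap \gamma\cdot\cX^w$ and then descend the vanishing to $V$, closely following the template of \cite[Theorem~4.2]{agm} (after \cite{brion}). For general $\gamma \in \Gamma'_\circ$, Corollary~\ref{c.intersect} says $V$ has rational singularities and is Cohen-Macaulay, so by Serre duality the claimed vanishing $H^i(V,\O_V(-\partial-\partial_J))=0$ for $i<\dim V$ is equivalent to $H^j(V,\omega_V(\partial+\partial_J))=0$ for $j>0$, which is what I actually prove; part~(2) is handled identically with $\partial^w_\gamma$ in place of $\partial$.

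A smooth birational model $f\colon \tilde V \to V$ is built from Diagram~\eqref{e.fiber3}: by Lemma~\ref{l.smthfact} the morphism $\tilde m'$ is smooth, so generic smoothness of the projection $\tcZ_J \to \Gamma'_\circ$ makes the fiber $\tilde V := (\tcZ_J)_\gamma$ smooth for general $\gamma$, and Lemma~\ref{l.czprime} makes the induced map to $V$ birational. Lemma~\ref{l.div-pullback} identifies $f^*\O_V(\partial+\partial_J)$ with $\O_{\tilde V}(\tilde\partial+\tilde\partial_J)$, so the divisor pullbacks behave in the expected way.

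The next step is to verify big-and-nefness of $\O_V(\partial+\partial_J)$. The line bundle $\O_V(\partial)$ restricts on each fiber of $V\to \P_J$ to the pullback of an ample line bundle on $Y$, so it is relatively ample over $\P_J$; meanwhile the pullback of $\O_{\P_J}(\partial_J)$ is the pullback of an ample divisor from the base, hence nef on $V$. A standard ``relatively ample plus pullback of ample'' argument (as in \cite[Section~9]{agm}) upgrades this to (big-and-nef, and in fact) ampleness of $\O_V(\partial+\partial_J)$; pulling back along the birational $f$ yields a big and nef line bundle on $\tilde V$. After arranging simple normal crossings for $\tilde\partial+\tilde\partial_J$ by further blowups if needed, Kawamata-Viehweg vanishing gives $H^j(\tilde V,\omega_{\tilde V}(\tilde\partial+\tilde\partial_J))=0$ for $j>0$.

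Descent from $\tilde V$ to $V$ is then supplied by Grauert-Riemenschneider (in the log form used in \cite[Section~9]{agm}): one has $f_*\omega_{\tilde V}(\tilde\partial+\tilde\partial_J)=\omega_V(\partial+\partial_J)$ and $R^j f_*\omega_{\tilde V}(\tilde\partial+\tilde\partial_J)=0$ for $j>0$, so the Leray spectral sequence transfers the vanishing to $V$. Part~(2) is identical after substituting $\partial^w_\gamma$ for $\partial$ and noting that $\partial X^w$ supports an ample line bundle on $X^w$. The main obstacle will be the positivity check for $\O_V(\partial+\partial_J)$ with coefficient one on $\partial_J$: if the relative/pullback argument only gives ampleness of $\O_V(\partial+N\partial_J)$ for $N\gg 0$, I would fall back on the big-and-nef form of Kawamata-Viehweg (which is all that is actually needed), or lift the argument to the mixing space $\cY$ and restrict, in the style of \cite{agm}.
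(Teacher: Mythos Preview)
Your proposal is correct and follows the same template as the paper: Serre duality, Kawamata--Viehweg on a resolution, and Grauert--Riemenschneider--type descent, all drawn from \cite{agm}. The organizational difference is that you work directly on the general fiber $V=\cY_J\cap\gamma\cdot\cX^w$ and its resolution $\tilde V$, whereas the paper stays on the family $\pi\colon\cZ_J\to\Gamma'_\circ$, proving $R^i\pi_*\omega_{\cZ_J}(\partial+\partial_J)=0$ (via Lemmas~\ref{l.higher1} and~\ref{l.push1}, both reducing to \cite[Lemma~10.9]{agm}) and then restricting to a general fiber. The family approach buys precisely what you anticipated as an obstacle: positivity is checked once on $\cY_J$, where $\partial\cY_J+\partial_J$ supports an ample by construction, so the coefficient-one issue on $\partial_J$ that you flag never arises; your big-and-nef fallback is valid and amounts to the same thing. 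One minor correction: Lemma~\ref{l.div-pullback} concerns the map $\mu'\colon\cZ_J\to\cY_J$, not the resolution $f$; the identity you actually need for descent is (the fiberwise restriction of) the pushforward statement $f_*\omega_{\tcZ_J}(\tilde\partial+\partial_J)=\omega_{\cZ_J}(\partial+\partial_J)$, which is the paper's Lemma~\ref{l.push1} and which \emph{uses} Lemma~\ref{l.div-pullback} as an ingredient rather than being a direct consequence of it.
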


This is a refinement of \cite[Theorem~10.4]{agm}.  Because $\Gamma'$ is (in general) smaller than the full fiberwise group $\Gamma$ used in \cite{agm}, we do not have access to certain techniques, e.g., using flatness of the action morphisms.  We get around this by more efficiently exploiting the fiber bundle structure illustrated in Diagram \eqref{e.fiber2}.  (In some ways, this leads to a simpler argument.)

As in \cite{agm}, the equivalences in each part follow from Serre duality, since $\cY_J \cap \gamma\cdot \cX^w$ is Cohen-Macaulay, as are the relevant divisors.  We focus on proving the second statement in each pair.

Consider again the diagram
\begin{equation}\label{e.fiberZ}
\begin{tikzcd}
 & \cZ_J\ar[r,"\mu'"] \ar[d,hook] \ar[dl,"\pi",swap] \ar[dr,phantom,"\square"]  & \cY_J \ar[d,hook] \\
\Gamma'_\circ & \Gamma'_\circ \times \cX^w \ar[l] \ar[r,"m'"]   & \cX.
\end{tikzcd}
\end{equation}
In this notation, we define two divisors on $\cZ_J$.  The first is
\[
 \partial= \partial\cZ_J = \partial\cY_J \times_{\cX} (\Gamma'_\circ \times \cX^w),
\]
and the second is
\[
 \partial^w = \partial^w\cZ_J = \cY_J \times_{\cX} (\Gamma'_\circ \times \partial\cX^w).
\]
As usual, $\partial_J$ denotes the pullback of $\partial\P_J$ to $\cZ_J$.  For general $\gamma \in\Gamma'_\circ$, we have
\[
  \omega_{Z_J}(\partial+\partial_J)|_{\pi^{-1}(\gamma)} \isom \omega_{\cY_J\cap\gamma\cdot\cX^w}(\partial+\partial_J),
\]
since $\omega_{\Gamma'_\circ}$ is trivial.  Similarly,
\[
 \omega_{Z_J}(\partial^w+\partial_J)|_{\pi^{-1}(\gamma)} \isom \omega_{\cY_J\cap\gamma\cdot\cX^w}(\partial^w_\gamma+\partial_J).
\]
To prove parts (1) and (2) of Theorem~\ref{t.vanish}, it therefore suffices to prove
\begin{align}
 R^i\pi_*\omega_{\cZ_J}(\partial+\partial_J) &= 0 \label{e.vanish1}
 \intertext{and}
 R^i\pi_*\omega_{\cZ_J}(\partial^w+\partial_J) &= 0, \label{e.vanish2} 
\end{align}
respectively.  We will prove \eqref{e.vanish1}, leaving the similar proof of \eqref{e.vanish2} to the reader.

Fix an $S$- and $A$-equivariant desingularization $\tilde{Y} \to Y$, such that the pullback of $\partial Y$ is a simple normal crossings divisor; this induces a desingularization $\phi\colon \tcY_J \to \cY_J$, along with a simple normal crossings divisor $\partial\tcY_J$.  By Lemma~\ref{l.czprime}, we obtain a desingularization $f\colon \tcZ_J \to \cZ_J$ by fiber product with the Bott-Samelson resolution, as in the diagram
\begin{equation}\label{e.tildeZ}
\begin{tikzcd}
 &\tcZ_J\ar[r,"\tilde\mu'"] \ar[d,"\tilde\iota"] \ar[ld,"\tilde\pi",swap] \ar[dr,phantom,"\square"]  & \tcY_J \ar[d] \\
\Gamma'_\circ &\Gamma'_\circ \times \tcX^w \ar[r,"\tilde{m}'"] \ar[l]  & \cX.
\end{tikzcd}
\end{equation}
Let $\tilde\partial = \partial\tcZ_J$ be the divisor pulled back from $\partial\tcY_J$.  Since these are smooth varieties, $\O_{\tcZ_J}(\tilde\partial) = (\tilde\mu')^*\O_{\tcY_J}(\partial\tcY_J)$.

\begin{lemma}\label{l.higher1}
$R^i\tilde\pi_*\omega_{\tcZ_J}(\tilde\partial+\partial_J) = R^i f_*\omega_{\tcZ_J}(\tilde\partial+\partial_J) = 0$ for $i>0$.
\end{lemma}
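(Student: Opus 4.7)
The plan is to prove the two vanishings separately. The first, $R^i f_*\omega_{\tcZ_J}(\tilde\partial + \partial_J) = 0$, will follow quickly from projection formula combined with Grauert--Riemenschneider. The desingularization $\phi\colon \tcY_J \to \cY_J$ was chosen so that $\O_{\tcY_J}(\partial\tcY_J) = \phi^*\O_{\cY_J}(\partial\cY_J)$, and since $f$ is the base change of $\phi$ along $\mu'$, we obtain $\O_{\tcZ_J}(\tilde\partial) = f^*\O_{\cZ_J}(\partial\cZ_J)$. The divisor $\partial_J$ on $\tcZ_J$ likewise pulls back from the corresponding divisor on $\cZ_J$ through $f$, because $f$ respects the projection down to $\P_J$. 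The projection formula then yields
\[
R^i f_*\omega_{\tcZ_J}(\tilde\partial + \partial_J) = \bigl(R^i f_*\omega_{\tcZ_J}\bigr) \otimes \O_{\cZ_J}(\partial\cZ_J + \partial_J),
\]
and Grauert--Riemenschneider supplies $R^i f_*\omega_{\tcZ_J} = 0$ for $i > 0$, since $\cZ_J$ has rational singularities by Lemma~\ref{l.czprime}.

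The second vanishing, $R^i\tilde\pi_*\omega_{\tcZ_J}(\tilde\partial + \partial_J) = 0$, is the substantive part. First, $\tilde\pi$ is proper: it factors as $\tilde\iota\colon \tcZ_J \to \Gamma'_\circ \times \tcX^w$ (proper as the base change of the proper morphism $\tcY_J \to \cX$), followed by projection from a product with the projective variety $\tcX^w$. We then apply relative Kawamata--Viehweg vanishing by realizing $\tilde\partial + \partial_J = \lceil D \rceil$ for a $\Q$-divisor $D$ that is $\tilde\pi$-nef and $\tilde\pi$-big with SNC fractional support. The positivity input is that $\partial Y$ supports an ample $\Q$-divisor $E = \sum a_i \partial_i$ on $Y$ (with $0 < a_i \leq 1$ after rescaling). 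Via the mixing-space construction and pullback along $\phi$, the divisor $E$ transports to a $\Q$-divisor on $\tcY_J$ supported on $\partial\tcY_J$ that is relatively ample over $\P_J$; combined with $\partial_J$ pulled back from the ample class $\partial\P_J$, this gives an ample $\Q$-divisor on $\tcY_J$, whose pullback via the smooth morphism $\tilde\mu'$ is nef and $\tilde\pi$-big on $\tcZ_J$. A small perturbation realigns the round-up with $\tilde\partial + \partial_J$ while preserving both the positivity and the SNC fractional-support conditions, and relative Kawamata--Viehweg delivers the vanishing.

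The main obstacle lies in the second half: orchestrating the $\Q$-divisor perturbation that realizes the integral divisor $\tilde\partial + \partial_J$ as $\lceil D \rceil$ for $D$ with the requisite $\tilde\pi$-positivity and SNC fractional support. Because $\partial Y$ only supports, rather than is itself, an ample divisor, some coefficients of the integral divisor must be split between an ample $\Q$-part and an effective remainder with small coefficients on SNC support, and propagating this bookkeeping through the mixing construction, the desingularization, and the smooth base change $\tilde\mu'\colon \tcZ_J \to \tcY_J$ is where the technical effort concentrates.
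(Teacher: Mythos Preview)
Your approach is correct and aligns with the paper's proof, which simply invokes \cite[Lemma~10.9]{agm} after noting that $\partial\cY_J + \partial_J$ supports an ample line bundle on $\cY_J$. Your sketch of the Kawamata--Viehweg argument for the $\tilde\pi$-vanishing accurately captures the content of that reference. One small correction: the map $f\colon \tcZ_J \to \cZ_J$ is not merely the base change of $\phi$ along $\mu'$---it also incorporates the Bott--Samelson resolution $\tcX^{\uw}\to\cX^w$---but the commutativity $\phi\circ\tilde\mu' = \mu'\circ f$ still yields the pullback identity $\O_{\tcZ_J}(\tilde\partial) = f^*\O_{\cZ_J}(\partial)$ you need for the projection formula (via Lemma~\ref{l.div-pullback}).
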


The proof is the same as that of \cite[Lemma~10.9]{agm}, using the fact that $\partial Y$ supports an ample line bundle on $Y$, and $\partial\P_J$ supports an ample line bundle on $\P_J$, so $\partial\cY_J+\partial_J$ supports an ample line bundle on $\cY_J$.  (Our $\O_{\tcZ_J}(\tilde\partial+\partial_J)$ is denoted $\mathcal{M}$ in {\it loc. cit.}.)

\begin{lemma}\label{l.push1}
$f_*\big(\omega_{\tcZ_J}(\tilde\partial+\partial_J) \big) = \omega_{\cZ_J}(\partial+\partial_J)$.
\end{lemma}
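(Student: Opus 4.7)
The plan is to reduce the identity to two well-known facts: the projection formula and Kempf's criterion for rational singularities, which says that for a desingularization $f\colon\tilde V\to V$ of a variety with rational singularities, $f_*\omega_{\tilde V}=\omega_V$. The main preliminary step will be to recognize the line bundle $\O_{\tcZ_J}(\tilde\partial+\partial_J)$ as $f^*\O_{\cZ_J}(\partial+\partial_J)$, after which the projection formula factors out the twist and reduces to identifying $f_*\omega_{\tcZ_J}$ with $\omega_{\cZ_J}$.

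First I would verify the Cartier divisor identity $\tilde\partial+\partial_J = f^*(\partial+\partial_J)$. Since $\partial_J$ is pulled back from $\partial\P_J$ on every level of the picture, and $f$ lies over $\P_J$, the equality $f^*\partial_J=\partial_J$ is automatic. For $\tilde\partial$, recall that the desingularization $\phi\colon\tcY_J\to\cY_J$ was chosen so that $\partial\tcY_J=\phi^*\partial\cY_J$. Combined with Lemma~\ref{l.div-pullback}, which gives $\O_{\cZ_J}(\partial)=(\mu')^*\O_{\cY_J}(\partial\cY_J)$, and the analogous identity $\O_{\tcZ_J}(\tilde\partial)=(\tilde\mu')^*\O_{\tcY_J}(\partial\tcY_J)$ recorded just before the lemma, commutativity of the back face of Diagram~\eqref{e.tildeZ} (i.e.\ $\mu'\circ f=\phi\circ\tilde\mu'$) yields
\[
 f^*\partial = f^*(\mu')^*\partial\cY_J = (\tilde\mu')^*\phi^*\partial\cY_J = (\tilde\mu')^*\partial\tcY_J = \tilde\partial,
\]
as required.

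Granting this, the projection formula gives
\[
 f_*\omega_{\tcZ_J}(\tilde\partial+\partial_J) \;=\; f_*\bigl(\omega_{\tcZ_J}\otimes f^*\O_{\cZ_J}(\partial+\partial_J)\bigr) \;=\; (f_*\omega_{\tcZ_J})\otimes\O_{\cZ_J}(\partial+\partial_J),
\]
so it remains only to show $f_*\omega_{\tcZ_J}=\omega_{\cZ_J}$. By Lemma~\ref{l.czprime}, $\cZ_J$ has rational singularities (inherited from $Y$) and $f\colon\tcZ_J\to\cZ_J$ is a desingularization; rational singularities in particular forces Cohen-Macaulayness, so $\omega_{\cZ_J}$ is a genuine coherent sheaf. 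Kempf's criterion then supplies the required equality of sheaves, completing the proof.

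There is no serious obstacle: once Lemma~\ref{l.czprime} has supplied rational singularities of $\cZ_J$ and the choice of desingularization has been made compatible with $\partial$ in the sense above, the argument is a formal manipulation. The only point demanding care is the pullback identity $\tilde\partial=f^*\partial$, which hinges on the compatibility of the two desingularizations $\phi$ and $\tilde\mu'$ sitting in a single commutative square.
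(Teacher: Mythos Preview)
Your proposal is correct and follows essentially the same route as the paper. The paper's proof is terser: it first strips off $\partial_J$ (exactly as you do) and then cites the discussion preceding \cite[Lemma~10.9]{agm}, using Lemma~\ref{l.div-pullback} for the identification $\O_{\cZ_J}(\partial)=(\mu')^*\O_{\cY_J}(\partial)$; unpacking that citation yields precisely your projection-formula-plus-Kempf argument and the pullback identity $\tilde\partial=f^*\partial$ via the commutative square.
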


\begin{proof}
Since $\partial_J$ is pulled back from $\P_J$, it suffices to show $f_*\big(\omega_{\tcZ_J}(\tilde\partial) \big) = \omega_{\cZ_J}(\partial)$.  This is proved exactly as in the discussion leading up to \cite[Lemma~10.9]{agm}, using Lemma~\ref{l.div-pullback} for the isomorphism $\O_{\cZ_J}(\partial) = (\mu')^*\O_{\cY_J}(\partial)$.
\end{proof}

Now \eqref{e.vanish1} follows from the combination of Lemmas~\ref{l.higher1} and \ref{l.push1}.  The proof of \eqref{e.vanish2} is similar, following the relevant parts of \cite{agm}.  So Theorem~\ref{t.vanish} is proved.

\pagebreak

\part{Positivity of coproduct coefficients}

The second goal of this article---and the one which motivated the search for a stronger equivariant positivity theorem---is to establish the positivity of coefficients appearing in coproduct formulas for {\it enriched} (or {\it back stable}) {\it Schubert} and {\it Grothendieck polynomials}.  These polynomials represent Schubert classes in the equivariant cohomology and K-theory (respectively) of certain infinite flag varieties.

As explained in \cite{LLS,LLS3} (and in \cite{part1,infschub}), these polynomials are also universal representatives for degeneracy loci of vector bundles over an arbitrary base scheme, and may therefore be characterized by this property. This perspective was emphasized in work on quiver loci by Buch and collaborators \cite{buch2,bkty,bkty2}.

\section{Preliminaries}

We generally follow the notation of \cite{infschub} and \cite{part1}, which in turn comes from \cite{LLS}.  For technical details about the K-theory of infinite flag varieties, we refer to \cite{LLS3}.

\subsection{Permutations and partitions}

For each finite interval $(-m,m]$, we have the symmetric group $\Sgp_{(-m,m]}$ of permutations of this interval.  The group $\Sgp_\Z$ is the group of permutations of $\Z$ which fix all but finitely many integers; in other words,
\[
  \Sgp_\Z = \bigcup_m \Sgp_{(-m,m]},
\]
with respect to the evident inclusions.  This group is generated by the adjacent transpositions $s_i$ (swapping $i$ and $i+1$), for all $i\in\Z$.

Permutations $w$ in $\Sgp_\Z$ are written in one-line notation, usually using the smallest window possible.  For instance, $w = [3,2]=s_2$ has $w(2)=3$ and $w(3)=2$, and $w(i)=i$ for all other $i$.  Similarly, $w=[-1,2,1,-2,0]$ has $w(-2)=-1$, $w(-1)=2$, $w(0)=1$, $w(1)=-2$, and $w(2)=0$, fixing all other integers.

Each permutation $w$ in $\Sgp_\Z$ determines a \define{dimension function} $k_w\colon \Z\times \Z \to \Z_{\geq0}$ by
\[
  k_w(p,q) = \#\{ i\leq p \,|\, w(i)>q \}.
\]
(Since $w$ permutes only finitely many integers, this number is always finite.)  These functions give a way of computing Bruhat order on $\Sgp_\Z$: one has $w\leq v$ iff $k_w(p,q) \leq k_v(p,q)$ for all $p,q$.

A permutation $w$ in $\Sgp_\Z$ is \define{$0$-Grassmannian} (or just \define{Grassmannian} for our purposes) if it has (at most) one unique descent at $0$; that is, $w(i)<w(i+1)$ for all $i\neq 0$.  Such permutations are in bijection with partitions $\lambda = (\lambda_1\geq \lambda_2 \geq \cdots \geq \lambda_s\geq 0)$, by
\[
  \lambda_k = w(1-k)-1+k  \quad \text{ for } k>0,
\]
and
\[
 w(k) = \lambda_{1-k}+k  \quad \text{ for } k\leq 0,
\]
filling in the unused integers in increasing order for the values of $w(k)$ for $k>0$.  
The Grassmannian permutation corresponding to $\lambda$ is written $w_\lambda$.  It has the property that
\[ k_{w_\lambda}(0,\lambda_i-i)=i \]
for each $i>0$.

The \define{longest permutation} in $\Sgp_{(-m,m]}$ is
\[
  w_\circ^{(m,m]} = [m,m-1,\ldots,-m+1],
\]
that is, it writes the integers in $(-m,m]$ in decreasing order.

\subsection{Vector spaces and tori}

Let $V$ be a countable-dimensional vector space with basis $e_i$, for $i\in\Z$.  This comes with an action of $T=T_\Z = \prod_{i\in\Z} \C^*$, scaling the $i$th coordinate by the character $y_i$.

For any interval $[m,n]$, we have the subspace $V_{[m,n]}$ spanned by $e_i$ for $m\leq i\leq n$.  In particular, we have spaces
\[
  V_{\leq q} = \Span\{ e_i \,|\, i\leq q\} \quad\text{and}\quad V_{>q} = \Span\{ e_i \,|\, i>q\}.
\]
These define a \define{standard flag} $V_{\leq\bull}$ and \define{opposite flag} $V_{>\bull}$ in $V$.

Often we will restrict to $V_{(-m,m]}$ for some $m\gg0$, and in this case, we also use the notation $V_{\leq\bull}$ and $V_{>\bull}$ for the standard and opposite flags in this finite-dimensional vector space.

The Borel group $B$ of upper-triangular matrices stabilizes the standard flag $V_{\leq \bull}$, while the opposite Borel $B^-$ of lower-triangular matrices stabilizes the opposite flag $V_{>\bull}$.

\subsection{Flag varieties and Schubert varieties}

Detailed discussions of infinite-dimensional flag varieties may be found in \cite{LLS3} and \cite{infschub}.  For our purposes, a less sophisticated setup suffices.

For $m\gg0$, we consider Grassmannians of half-dimensional spaces $\Gr = \Gr(m,V_{(-m,m]})$, as well as partial flag varieties $\Fl = \Fl(m-n,\ldots,m+n;V_{(-m,m]})$.  Flags $E_\bull$ are indexed so that $E_p$ has dimension $m+p$.

A permutation $w\in \Sgp_{(-m,m]} \subset \Sgp_\Z$ determines a flag $E^w_\bull$ by taking $E^w_p$ to be the span of $e_{w(i)}$ for $i\leq p$; we write the corresponding point as $x_w\in \Fl$.  Minimal representatives have $w(i)<w(i+1)$ for $i<m-n$ and $i>m+n$.  Each such permutation also determines a pair of opposite Schubert varieties in $\Fl$,
\[
  X_w = \overline{B\cdot x_w} \quad\text{and}\quad \Omega_w = \overline{B^-\cdot x_w},
\]
meeting transversally in the point $x_w$, with
\[
 \dim X_w = \codim \Omega_w = \ell(w).
\]
The variety $\Omega_w$ can be described alternatively as a degeneracy locus
\[
  \Omega_w = \big\{ E_\bull \,\big|\, \dim(E_p\cap V_{>q}) \geq k_w(p,q) \text{ for all }p,q\big\}.
\]
(In the notation of Part~I, we have $\Omega_w = X^w$.)

As reviewed in \S\ref{ss.eqK}, the classes of $X_w$ and $\Omega_w$ form Poincar\'e dual bases for cohomology and K-theory of $\Fl$, as $w$ ranges over minimal representatives.

In particular, $\Gr$ is the case of $\Fl$ where $n=0$.  In this case, Schubert varieties are indexed by Grassmannian permutations $w_\lambda$, or equivalently by the partitions $\lambda$ themselves.

\subsection{Rings of polynomials and formal series}

Let $\Lambda = \Z[c] = \Z[c_1,c_2,\ldots]$ be the polynomial ring in countably many variables.  This is equipped with a grading that places $c_i$ in degree $i$, as well as a (descending) filtration which collects elements of degrees $\geq i$.  Let
\[
  \HH = \Lambda[x,y] = \Z[c,x,y]
\]
be the polynomial ring in three (countable) sets of variables, where the $x$ and $y$ variables all have degree $1$, and are indexed (as above) by $i\in\Z$.  Let
\[
  R=\Z[y]
\]
be the polynomial ring in the $y$ variables.

Given any graded ring $A$, let $A[\![\beta]\!]_{\gr}$ be the subring of the usual formal series ring $A[\![\beta]\!]$ spanned by homogeneous elements, where $\beta$ is given degree $-1$ as before.  For instance, if $a_1,a_2,a_3,\ldots$ is a sequence of elements of $A$ with $a_i$ in degree $i$, then $a_1 + a_2 \beta + a_3 \beta^2 + \cdots$ lies in $A[\![\beta]\!]_{\gr}$ (and has degree $1$).

Let
\[
 \hat\Lambda^\beta = \Z[c^\beta][\![\beta]\!]_{\gr} \quad \text{and}\quad \hat{\HH}^\beta = \Z[c^\beta,x,z][\![\beta]\!]_{\gr}
\]
using the grading which gives $c^\beta_i$ degree $i$, and each $x$ and $z$ variable degree $1$.  Let $\hat{R}^\beta = \Z[z][\![\beta]\!]_{\gr}$ be the corresponding ring of series in just the $z$ variables.

For economy of notation, we often omit the superscript in $c^\beta$ when it is understood from context.

Evaluating $\beta=0$ and $z_i=y_i$ determines a homomorphism $\hat{\HH}^\beta \tto \HH$.  As remarked above.

It is useful to view the $z$ variables as series in the $y$ variables, by
\[
  z_i = \beta^{-1}(\ee^{\beta y}-1) = y + \frac{1}{2}\beta y^2 + \frac{1}{6} \beta^2 y^3 + \cdots.
\]
So $z_i\mapsto y_i$ under $\beta\mapsto 0$, and this is compatible with the above evaluation $\hat{\HH}^\beta \tto \HH$.

On the other hand, the evaluation $\beta=-1$ sends $z_i$ to $1-\ee^{-y_i}$, and defines homomorphisms $R^\beta \to R(T) = \Z[\ee^{\pm y_i}:i\in\Z]$.

Occasionally we use {\it formal group notation}, writing
\begin{align*}
 u \oplus v &= u+v + \beta uv
\intertext{and}
 u\ominus v &= \frac{u-v}{1+\beta v}.
\end{align*}
In particular, $\ominus v = \frac{-v}{1+\beta v}$.  (So $u\ominus v = u \oplus (\ominus v)$.)

At $\beta=0$, the formal group operations are just usual addition and subtraction.  At $\beta=-1$, with $z_i=1-\ee^{-y_i}$ as above, one has $z_i\oplus z_j = 1-\ee^{-y_i-y_j}$ and $z_i\ominus z_j = 1-\ee^{-y_i+y_j}$.

\begin{remark}\label{r.beta}
The $\beta$ parameter is related to {\it connective K-theory} (see \cite[Appendix~A]{kdeg}).  In our context, however, it can be seen simply a grading parameter and a useful device for collecting both cohomology and K-theory together, by specializing at $\beta=0$ and $\beta=-1$, respectively.  More specifically, the $\beta=-1$ specialization loses no information, because for any element of $\Z[\![c,x,z]\!]$, and for any specified degree, there is a unique homogeneous element of that degree in $\hat{\HH}^\beta$ which lifts the given element under the $\beta\mapsto-1$ specialization.
\end{remark}

\section{Schubert and Grothendieck polynomials}\label{s.polys}

Following \cite{LLS} and \cite{LLS3}, along with the variations in \cite{part1} and \cite{infschub}, we will define Schubert polynomials $\enS_w(c;x;y)$ in variables
\[
  c=(c_1,c_2,\ldots), \quad x=(\ldots,x_{-1},x_0,x_1,\ldots), \quad \text{ and }\; y = (\ldots,y_{-1},y_0,y_1,\ldots),
\]
as well as Grothendieck polynomials $\enG^\beta_w(c;x;z)$, using $z$-variables in place of the $y$-variables.  While $\enS_w$ is indeed a polynomial, $\enG^\beta_w$ is a formal series.  We will give formulas for them first, and then describe the rings where they live, along with specializations relating them.

\subsection{Formulas for polynomials}
First we write the formulas for the longest element $w_\circ^{(-m,m]} = [m,m-1,\ldots,0,-1,\ldots,-m+1]$, with $\lambda = (2m-1,2m-2,\ldots,2,1)$.  The Schubert polynomial is
\begin{equation}\label{e.enSdef}
  \enS_{w_\circ^{(-m,m]}} = \det\big( c(i)_{\lambda_i-i+j} \big)_{1\leq i,j\leq 2m-1},
\end{equation}
where
\[
  c(i) =\begin{cases} \displaystyle{c \cdot \prod_{a=i-m+1}^0 (1-x_a t) \prod_{b=1}^{m-i} (1+y_b t)} & \text{for } 1\leq i<m; \\ 
  \displaystyle{c} &\text{for } i=m; \text{ and }\\ 
  \displaystyle{c \cdot \prod_{a=1}^{i-m} \frac{1}{1-x_a t} \prod_{b=m-i+1}^0 \frac{1}{1+y_b t} }& \text{for } m<i\leq 2m-1; \end{cases}
\]
$c$ stands for the series $1+c_1 t + c_2 t^2 + \cdots$; and $c(i)_k$ is the coefficient of $t^k$ in the series expansion.  (This is just the standard notation for manipulating Chern series.)  For example, with $m=2$ we have
\[
  c(1) = c\cdot (1-x_0 t)(1+y_1 t), \quad c(2) = c, \quad \text{and} \; c(3) = c\cdot \frac{1}{1-x_1 t} \cdot\frac{1}{1+y_0 t},
\]
so
\[
 c(1)_3 = c_3 + (-x_0+y_1)c_2 - x_0 y_1 c_1
\]
and
\[
 c(3)_2 = c_2 + (x_1-y_0) c_1 + (x_1^2 - x_1 y_0 + y_0^2).
\]
And
\[
  \enS_{[2,1,0,-1]} = \det \left(\begin{array}{ccc} c(1)_3 & c(1)_4 & c(1)_5 \\ c(2)_1 & c(2)_2 & c(2)_3 \\ 0 & 1 & c(3)_1\end{array}\right).
\]

For Grothendieck polynomials, some formulas simplify if we use the notation
\[
  \tilde{x}_a = \ominus x_a := \frac{-x_a}{1+\beta x_a}.
\]
The Grothendieck polynomial for $w_\circ^{(-m,m]}$ is
\begin{equation}\label{e.enGdef}
  \enG^\beta_{w_\circ^{(-m,m]}} = \det\left( \sum_{d\geq 0} \beta^d \binom{2m-i+d-1}{d} c^\beta(i)_{\lambda_i-i+j+d} \right)_{1\leq i,j\leq 2m-1},
\end{equation}
where
\[
  c^\beta(i) =\begin{cases} \displaystyle{c^\beta \cdot \prod_{a=i-m+1}^0 (1+\tilde{x}_a t) \prod_{b=1}^{m-i} (1+z_b t)} & \text{for } 1\leq i<m; \\ 
  \displaystyle{c^\beta} &\text{for } i=m; \text{ and }\\ 
  \displaystyle{c^\beta \cdot \prod_{a=1}^{i-m} \frac{1}{1+\tilde{x}_a t} \prod_{b=m-i+1}^0 \frac{1}{1+z_b t} }& \text{for } m<i\leq 2m-1; \end{cases}
\]
and $c^\beta(i)_k$ is again defined by collecting the coefficient of $t^k$.

As with their finite counterparts, the Schubert and Grothendieck polynomials for other permutations are computed by descending induction, using difference operators.  For $i\in\Z_{\neq 0}$, let $\partial_i$ be the usual divided difference operator (on $x$ variables), acting by
\begin{equation}
  \partial_i(f) = \frac{f - s_i(f)}{x_i-x_{i+1}},
\end{equation}
where $s_i$ swaps $x_i$ and $x_{i+1}$ in the polynomial (or series) $f$; the $c$ and $y$ variables are treated as scalars.  For $i=0$, the operator is defined by the same formula, except it also acts on $c$ variables by
\begin{equation}
 \partial_0(c_k) = c_{k-1} + x_1 c_{k-2} + \cdots + x_1^{k-1}.
\end{equation}
The isobaric difference operators $\pi_i$ are defined similarly, by
\begin{equation}
\pi_i(f) = \frac{(1+\beta x_{i+1})f - (1+\beta x_i)s_i(f)}{x_i-x_{i+1}},
\end{equation}
and for $i=0$,
\begin{equation}
 \pi_0(c^\beta_k) = \left(\sum_{i=0}^{k-1} (-\tilde{x}_1)^i ( c^\beta_{k-1-i} - \beta c^\beta_{k-i} ) \right) - \beta(-\tilde{x}_1)^k,
\end{equation}
where $\tilde{x}_1 = \frac{-x_1}{1+\beta x_1}$ as before.  Here the transposition $s_i$ acts as usual on polynomials in $x$ (swapping $x_i$ and $x_{i+1}$).  For $i\neq 0$, $s_i$ fixes the $c$ variables, and
\[
  s_0(c^\beta_k) = c^\beta_k + (\tilde{x}_0-\tilde{x}_1)\sum_{i=0}^{k-1} (-\tilde{x}_1)^i c^\beta_{k-1-i}.
\]
These operators satisfy Leibniz-type rules,
\begin{align*}
 \partial_i( f\cdot g) &= \partial_i(f) \cdot g + s_i(f)\cdot \partial_i(g)\\
 \intertext{and}
 \pi_i(f\cdot g) &= \pi_i(f)\cdot g + s_i(f)\cdot\pi_i(g) + \beta s_i(f)\cdot g,
\end{align*}
and together, these formulas suffice to define operators $\partial_i$ and $\pi_i$.  They are related by
\[
  \pi_i(f) = \partial_i \big((1+\beta x_{i+1})f\big).
\]

The inductive formulas are:
\begin{align*}
 \enS_{ws_i} &= \partial_i\enS_w
 \intertext{and}
 \enG^\beta_{ws_i} &= \pi_i\enG^\beta_w
\end{align*}
when $ws_i<w$ in Bruhat order.

\begin{example}
We have $\enS_{[1,0]} =  \enS_{s_0} = c_1$, so $\partial_0\enS_{s_0} = 1$ as expected.  Similarly, $\enG^\beta_{[1,0]} = \enG^\beta_{s_0} = c^\beta_1 +\beta c^\beta_2 + \beta^2 c^\beta_3 + \cdots = \sum_{i=1}^\infty \beta^{i-1} c^\beta_i$, and a pleasant exercise shows $\pi_0\enG^\beta_{s_0}=1$.
\end{example}

\begin{remark}
The Schubert polynomial $\enS_w$ is recovered from $\enG^\beta_w$ by setting $\beta=0$ and $z=y$.  The back stable double Grothendieck polynomials $\bsG_w$ of \cite{LLS3} are recovered from $\enG^\beta_w$ by setting $\beta=-1$, along with an evaluation of the $c^\beta$ and $z$ variables to be described in \S\ref{ss.rings} below.
\end{remark}

\begin{remark}
The ``usual'' or ``finite'' Schubert polynomials are obtained from $\enS_w(c;x;y)$ by evaluating $c=1$ (so $c_k=0$ for all $k>0$) and substituting $-y_i$ for $y_i$:
\[
  \Sch_w(x;y) = \enS_w(1;x;-y).
\]
Similarly, the finite Grothendieck polynomials are obtained by setting $c^\beta=1$ and $\beta=-1$:
\[
 \Groth_w(x;z) = \enG_w(1;x;z),
\]
where we write $\enG_w$ for the evalutation of $\enG^\beta_w$ at $\beta=-1$.  Our conventions are set up so that
\[
  \Groth_{[n,n-1,\ldots,1]}(x;z) = \prod_{i+j\leq n} (x_i+z_j-x_i z_j),
\]
for $n>0$.\footnote{The fact that our determinantal formulas specialize this way involves a Vandermonde-like identity; this is the ``dominant case'' in \cite[\S1]{kdeg}.}
\end{remark}

\begin{remark}
The \define{vexillary} permutations include the Grassmannian permutations, as well as the longest permutation $w_\circ$.  They can be described in terms of a \define{triple} $\triple$, which consists of three sequences of integers $(k_\bull,p_\bull,q_\bull)$ with
\[
   0<k_1<\cdots<k_s, \quad p_1 \leq\cdots \leq p_s, \quad \text{and }\, q_1\geq \cdots \geq q_s,
\]
such that the sequence $q_i-p_i+k_i$ is nonincreasing.  Given a triple, one defines a partition $\lambda$ by setting $\lambda_{k_i} = q_i-p_i+k_i$, and filling in the other parts minimally.  (That is, $\lambda_k =\lambda_{k_i}$ whenever $k_{i-1}<k\leq k_i$.)  The corresponding vexillary permutation $w=w(\triple)$ is the minimal one such that $k_w(p_i,q_i)=k_i$ for each $i$.  (The conditions on $k_\bull$, $p_\bull$, and $q_\bull$ guarantee that $w(\triple)$ is well defined.  See \cite{part1} for details on how a vexillary permutation arises from a triple.)

If $w=w(\triple)$ is the vexillary permutation corresponding to a triple $\triple =(k_\bull,p_\bull,q_\bull)$, the Grothendieck polynomial $\enG^\beta_w$ is given by a determinantal formula similar to \eqref{e.enGdef} (see \cite{kdeg}).  Specifically, let
\[
  c^\beta(k_i) = c^\beta\cdot \frac{ \prod_{a\leq 0} (1+\tilde{x}_a t) \prod_{b\leq q_i} (1+z_b t) }{\prod_{a\leq p_i} (1+\tilde{x}_a t) \prod_{b\leq 0}(1+z_b t) },
\]
with $c^\beta(k)=c^\beta(k_i)$ for $k_{i-1}<k\leq k_i$.  Let $\lambda$ be the corresponding partition.  With these substitutions, \eqref{e.enGdef} gives a formula for $\enG^\beta_w$.  Specializing to $\beta=0$ recovers determinantal formulas for $\enS_w$.

In particular, when $\triple=(k_\bull,p_\bull,q_\bull)$ is given by $k_i=i$, $p_i=-m+i$, and $q_i=m-i$ for $1\leq i\leq 2m-1$, the corresponding permutation is $w_\circ^{(-m,m]}$, the partition is $\lambda = (2m-1,\ldots,2,1)$, and the above formula for $c^\beta$ breaks up into the cases described earlier for $w_\circ$.

Every Grassmannian permuation $w_\lambda$ is vexillary, with $p=(0,\ldots,0)$, so the determinantal formula involves no $x$ variables.  In this case, a version of the determinantal formula is presented in \cite[\S9]{LLS3}.
\end{remark}

\subsection{The rings of Schubert and Grothendieck polynomials}\label{ss.rings}

Recall $\HH=\Lambda[x,y]$ and $R=\Z[y]$, and $\hat{\HH}^\beta = \Z[c^\beta,x,z][\![\beta]\!]_{\gr}$ and $\hat{R}^\beta = \Z[z][\![\beta]\!]_{\gr}$.

From the definitions, we see that $\enS_w$ lies in $\HH$.  In fact, as $w$ ranges over $\Sgp_\Z$, the polynomials $\enS_w$ form a basis for $\HH$ as an $R$-module.  (See \cite{LLS,part1}.)

Similarly, $\enG^\beta_w$ lies in $\hat{\HH}^\beta$.  In contrast to the Schubert polynomials, Grothendieck polynomials do not form a basis for $\hat{\HH}^\beta$.  They span the \define{subalgebra of Grothendieck polynomials}\footnote{We have used $\KK$ for the ring denoted $B$ or $B(x;a)$ in \cite{LLS3}, to avoid conflict with our notation for Borel groups.},
\[
  \KK^{\beta} = \bigoplus_{w\in\Sgp_\Z} R^\beta \cdot \enG^\beta_w,
\]
as a module over
\[
  R^\beta = \Z[z_i, (1+\beta z_i)^{-1}: i\in \Z],
\]
a subring of $\hat{R}^\beta$.  The fact that $\KK^\beta$ is closed under multiplication is nontrivial---as explained in the Introduction, it was conjectured in \cite{LLS3} and follows from Theorem~B.

As remarked above, evaluating $\beta=0$ sends $\enG^\beta_w$ to $\enS_w$.

The evaluation $\beta=-1$ defines homomorphisms $R^\beta \to R(T) = \Z[\ee^{\pm y_i}:i\in\Z]$ and $\KK^{\beta} \to \KK = \bigoplus_w R(T)\cdot \enG_w$, where $\enG_w$ is the evaluation of $\enG^\beta_w$ at $\beta=-1$.

To compare with the notation of \cite{LLS3} and recover the back stable double Grothendieck polynomials $\bsG_w(x;a)$ discussed there, make the following substitutions.  Set
\[
  c^\beta = \prod_{i\leq 0} \frac{ 1 + z_i t}{1 + \tilde{x}_i t},
\]
with $\tilde{x}_i = \ominus x_i = \frac{-x_i}{1+\beta x_i}$.  Then set $\beta=-1$ and $z_i = \ominus a_i = \frac{-a_i}{1-a_i}$.  (In particular, our $\ee^{y_i}$ corresponds to the character $\ee^{-\varepsilon_i}$ in \cite{LLS3}.)

\begin{lemma}\label{l.en2bs}
Under these evaluations, $\enG^\beta_w$ maps to $\bsG(x;a)$.
\end{lemma}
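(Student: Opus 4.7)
The plan is to reduce the identity to a base case on the longest elements $w_\circ^{(-m,m]}$ together with a compatibility of the defining recursions. Since $\Sgp_\Z = \bigcup_m \Sgp_{(-m,m]}$, every $w\in\Sgp_\Z$ lies below some $w_\circ^{(-m,m]}$ in Bruhat order, and both families satisfy $f_{ws_i}=\pi_i f_w$ whenever $ws_i<w$. So, if the substitution carries $\enG^\beta_{w_\circ^{(-m,m]}}$ to $\bsG_{w_\circ^{(-m,m]}}(x;a)$ and intertwines the operators $\pi_i$, the lemma follows by descending induction on length inside each interval.

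For the base case, I would plug the substitution $c^\beta(t)=\prod_{i\leq 0}\tfrac{1+z_it}{1+\tilde x_it}$ into the Chern series $c^\beta(i)$ appearing in \eqref{e.enGdef}. The prefactor $c^\beta$ cancels with the $\prod_{a=i-m+1}^0(1+\tilde x_at)$ and $\prod_{b=1}^{m-i}(1+z_bt)$ by telescoping, leaving
\[
  c^\beta(i)=\frac{\prod_{b\leq m-i}(1+z_bt)}{\prod_{a\leq 0}(1+\tilde x_at)}
\]
in the range $1\leq i<m$, and the analogous cancellation handles $m\leq i\leq 2m-1$. Setting $\beta=-1$ and $z_i=\ominus a_i$ turns this into the Chern series with numerator roots $\{\ominus a_b:b\leq m-i\}$ and denominator roots $\{\ominus x_a:a\leq 0\}$, which is exactly the Chern series governing the known determinantal formula for $\bsG_{w_\circ^{(-m,m]}}(x;a)$ in \cite{LLS3}. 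Thus the entries of the two determinants agree, and the base case follows.

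For the inductive step, I would verify that the $\pi_i$ of the paper correspond, under the substitution, to the divided difference operators used in \cite{LLS3}. When $i\neq 0$, both operators swap only $x_i,x_{i+1}$ and treat the rest as scalars, so there is nothing to check. The only subtle case is $i=0$, where $\pi_0$ (and $s_0$) acts nontrivially on the $c^\beta$ variables by the explicit formula in \S\ref{s.polys}. The required compatibility reduces to the generating-series identity
\[
  s_0\Bigl(\prod_{i\leq 0}\tfrac{1+z_it}{1+\tilde x_it}\Bigr)\;-\;\prod_{i\leq 0}\tfrac{1+z_it}{1+\tilde x_it}\;=\;(\tilde x_0-\tilde x_1)\,t\,\sum_{i\geq 0}(-\tilde x_1 t)^i\cdot c^\beta(t),
\]
which follows by factoring out $\tfrac{1+\tilde x_0t}{1+\tilde x_1t}-1$ from the left-hand side and expanding the geometric series. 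Extracting the coefficient of $t^k$ recovers exactly the paper's formula for $s_0(c^\beta_k)$, and the relation $\pi_0(f)=\partial_0((1+\beta x_1)f)$ then promotes this to the $\pi_0$ level.

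The main obstacle is just this generating-series verification for $s_0$ on $c^\beta$: it is the one place where the presence of the $c^\beta$-variables (rather than a polynomial expression already built out of $x$ and $z$) could obstruct the induction, and it must be checked that the paper's formal definition of the $s_0$-action on $c^\beta_k$ agrees with the geometric $s_0$-action after the substitution. Once this is confirmed, the remaining steps are formal, and the dictionary of Remark~\ref{r.beta} together with the conventions $\tilde{x}_i=\ominus x_i$ and $z_i=\ominus a_i$ ensures that the induction lands in $\bsG_w(x;a)$ as stated.
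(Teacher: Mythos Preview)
Your strategy is sound but takes a genuinely different route from the paper.  The paper does not verify the base case and the divided-difference compatibility directly; instead it invokes Proposition~\ref{p.deg}, which says that $\enG_w$ (after the given evaluation) is uniquely characterized by representing degeneracy locus classes, together with the fact from \cite[\S10]{LLS3} that $\bsG_w(x;a)$ also satisfies this characterization.  Since both families are determined by the same geometric property, they coincide.  This avoids any explicit matching of determinants or operator actions.

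Your direct algebraic approach would also work, with two caveats.  First, a small slip: after telescoping you should obtain
\[
  c^\beta(i)=\frac{\prod_{b\leq m-i}(1+z_b t)}{\prod_{a\leq i-m}(1+\tilde{x}_a t)},
\]
with denominator indexed by $a\leq i-m$, not $a\leq 0$.  Second, and more substantively, your base case relies on \cite{LLS3} providing a determinantal formula for $\bsG_{w_\circ^{(-m,m]}}$ that literally matches \eqref{e.enGdef}; the determinantal formulas stated there are for Grassmannian permutations, so you would still need to supply the formula for the (non-Grassmannian) longest element, or else start the induction from a different base.  The paper's degeneracy-locus argument sidesteps exactly this issue: it uses a characterization of $\bsG_w$ that is known for all $w$, rather than a closed formula available only in special cases.
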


The proof consists in observing that appropriate evaluations of both $\enG^\beta_w$ and $\bsG(x;a)$ represent degeneracy loci, which in turn determine the polynomials.  This is carried out in the next section.

\begin{remark}
As noted above, for a Grassmannian permutation $w_\lambda$, the polynomials $\enS_{w_\lambda}$ and $\enG^\beta_{w_\lambda}$ involve no $x$ variables.  So we may write them as
\[
  \enS_\lambda(c;y) \in \Lambda[y] \quad\text{and}\quad \enG^\beta_\lambda(c;z) \in \widehat{\Lambda[z]}^\beta,
\] 
respectively, where $\widehat{\Lambda[z]}^\beta = \Z[c^\beta,z][\![\beta]\!]_{\gr}$.
\end{remark}

\section{Degeneracy loci}\label{s.degloci}

The polynomials $\enS_w(c;x;y)$ and $\enG_w(c;x;z)$ are characterized by the property that they represent certain degeneracy loci.  For $\enS_w$, this is explained in \cite{part1} (see also \cite[\S12.4]{ecag}); for $\enG_w$ it is implicit in \cite{kdeg}.  The connection between degeneracy loci and the back stable versions of these polynomials is explained in \cite[\S10]{LLS3}.  Here we review the facts relevant to our setup.

Suppose $X$ is a nonsingular variety with an action of $T$, equipped with a $T$-equivariant vector bundle $V$ of rank $2m$ for $m\gg0$, and flags of equivariant subbundles
\[
  E_\bull: \cdots \subset E_{-1} \subset E_0 \subset E_1\subset \cdots \quad\text{and}\quad F_\bull: \cdots \subset F_{1} \subset F_0 \subset F_{-1}\subset \cdots,
\]
indexed so that $E_p$ has rank $m+p$ and $F_q$ has rank $m-q$.  Given a permutation $w\in\Sgp_\Z$ (with $m$ large enough so that $w(i)=i$ for $|i|\geq m$), we have a degeneracy locus
\begin{equation}
 \Omega_w = \big\{x\in X\,\big|\, \dim(E_p\cap F_q) \geq k_w(p,q) \text{ for all }p,q \big\}.
\end{equation}
This is a closed, $T$-invariant subvariety of $X$, of codimension at most $\ell(w)$.  When the codimension is maximal, Schubert and Grothendieck polynomials give formulas for its class in equivariant cohomology and K-theory, respectively.

\begin{proposition}\label{p.deg}
Assume $\Omega_w\subset X$ has codimension $\ell(w)$.  In $H_T^*X$, make the evaluations
\begin{align*}
  c_k &= c_k(V-E_0-F_0), \\
  x_i &= c_1( (E_i/E_{i-1})^* ), \text{and} \\
  y_i &= c_1( F_{i-1}/F_i ).
\end{align*}
Then
\begin{equation}\label{e.degS}
  [\Omega_w] = \enS_w(c;x;y).
\end{equation}
Evaluating the variables in the same way, this time as K-theoretic Chern classes  and with $z_i$ in place of $y_i$, we have
\begin{equation}\label{e.degG}
  [\O_{\Omega_w}] = \enG_w(c;x;z)
\end{equation}
in $K_T(X)$.

Furthermore, $\enS_w$ and $\enG_w$ are uniquely determined by these formulas (for all such $X$, $V$, $E_\bull$ and $F_\bull$).
\end{proposition}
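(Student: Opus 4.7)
The plan is to prove the proposition in three steps: first reduce to a universal setting via classifying morphisms; second verify the base case $w = w_\circ^{(-m,m]}$ using a Kempf--Laksov-type determinantal formula together with its K-theoretic refinement; and third propagate to arbitrary $w \in \Sgp_\Z$ via divided difference operators. Uniqueness will follow at the end from the basis properties of $\{\enS_w\}$ and $\{\enG_w\}$.

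For the universal reduction, the data $(X, V, E_\bull, F_\bull)$ determines a $T$-equivariant classifying morphism to a mixing-space approximation of the flag variety $\Fl = \Fl(m-n,\ldots,m+n; V_{(-m,m]})$ carrying its tautological flag together with the constant opposite flag induced by $V_{>\bull}$, and $\Omega_w \subset X$ is the pullback of the universal opposite Schubert variety of Section~5. Since both sides of \eqref{e.degS} and \eqref{e.degG} are compatible with pullback along such a morphism, it suffices to verify the formulas in this universal case. For the base case $w = w_\circ^{(-m,m]}$, the universal $\Omega_{w_\circ^{(-m,m]}}$ is cut out as the intersection of maximal-rank conditions on the induced maps $E_p \to V/F_q$, and the Kempf--Laksov formula in equivariant cohomology, together with its K-theoretic analogue from \cite{kdeg}, produces precisely the determinantal expressions \eqref{e.enSdef} and \eqref{e.enGdef} under the Chern class substitutions in the proposition.

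For propagation, I use that for a suitable sequence of simple reflections $s_{i_1},\ldots,s_{i_\ell}$ expressing a reduced decomposition of $w_\circ^{(-m,m]} w$, one has
\[
  [\Omega_w] = \partial_{i_\ell} \cdots \partial_{i_1} [\Omega_{w_\circ^{(-m,m]}}]
  \quad \text{and} \quad
  [\O_{\Omega_w}] = \pi_{i_\ell} \cdots \pi_{i_1} [\O_{\Omega_{w_\circ^{(-m,m]}}}],
\]
where the geometric operators on the right are the standard pushforward-then-pullback operators along the $\P^1$-bundles forgetting a single step of the flag. I then check that, under the Chern class substitutions, these geometric operators match the algebraic $\partial_i$ and $\pi_i$ defined in Section~6. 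For $i \neq 0$ this is the classical fact that divided differences swap consecutive Chern roots on a $\P^1$-bundle; for $i = 0$, where the flags $E_\bull$ and $F_\bull$ meet, a direct Chern class computation on the $\P^1$-bundle that replaces $E_0$ by a one-parameter family bridging the two flags yields exactly the action $\partial_0(c_k) = c_{k-1} + x_1 c_{k-2} + \cdots + x_1^{k-1}$ on the $c$ variables, and similarly the prescribed $\pi_0(c^\beta_k)$. I expect this $i=0$ verification to be the main technical obstacle, as it is precisely what distinguishes the present two-flag setup from the classical calculus on a finite flag variety. Once both sides of \eqref{e.degS} and \eqref{e.degG} satisfy the same recursion and agree at the top, equality follows by descending induction; uniqueness is then automatic, since in the universal case the classes $[\Omega_w]$ form a basis of $H_T^*(\Fl)$ over $R$ and the $\enS_w$ form a basis of $\HH$ over $R$, with the analogous statement in $K_T(\Fl)$ and $\KK^\beta$ yielding uniqueness of $\enG_w$ (using the $\beta \mapsto -1$ specialization described in Remark~\ref{r.beta}).
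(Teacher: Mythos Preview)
Your approach is correct and matches the paper's strategy: both establish the base (vexillary) case via the determinantal formulas---the paper citing \cite{part1} for cohomology and \cite[Theorem~1]{kdeg} for K-theory---and then descend to general $w$ by applying difference operators, with uniqueness extracted from the universal flag variety $\Fl(m-n,\ldots,m+n;V)$, whose equivariant cohomology and K-theory are quotients of $\Lambda[x,y]$ by relations only in high degree. The paper's proof is essentially a sequence of citations, so your outline fills in the intended structure; one small simplification is that the paper argues uniqueness directly from this high-degree-relations observation rather than via the basis property of $\{\enS_w\}$.
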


\begin{proof}
The formulas go back to ones appearing in \cite{bkty,bkty2}.  A precise statement of \eqref{e.degS} is in \cite{part1}; the vexillary case of \eqref{e.degG} is \cite[Theorem~1]{kdeg}, and the general case follows by applying difference operators.

To see that the degeneracy locus formulas determine the polynomials $\enS_w$ and $\enG_w$, it suffices to consider the partial flag variety $X=\Fl(m-n,\ldots,m+n;V)$, for $m\gg n$.  Take $E_\bull$ to be the tautological flag, and $F_\bull = V_{>\bull}$ to be the trivial (opposite) flag, so $\Omega_w$ is a Schubert variety.  Both $H_T^*X$ and $K_T(X)$ are quotients of $\Lambda[x,y]$, with defining relations only in large degrees, so any finite coefficient of a representative for $\Omega_w$ can be detected.
\end{proof}

The specified evaluations send $c$ to $\prod_{i\leq 0}\frac{1+y_i}{1-x_i}$ in cohomology, and to $\prod_{i\leq 0}\frac{1+z_i}{1+\tilde{x}_i}$ in K-theory.  These are the same evaluations which identify $\enS_w$ with $\bsS_w$ and $\enG_w$ with $\bsG_w$, so Lemma~\ref{l.en2bs} follows.

Schubert varieties $\Omega_w \subset \Fl$ and $\Omega_\lambda \subset \Gr$ are basic examples of degeneracy loci.  As noted above, here one takes $E_\bull$ to be the tautological flag and $F_\bull = V_{>\bull}$ to be the trivial flag, so that $y_i$ is a torus weight and $z_i = 1-\ee^{-y_i}$ in $R(T)$.

We will need a variation, following \cite[\S8]{infschub}.  Let $\bV = V\oplus V$ with $T$ acting diagonally.  It will be convenient to double the index set and write the standard basis vectors for $\bV$ as $\bbe_i$ and $\bbe_{i'}$, so that
\[
   \bbe_i = (e_i,0) \quad \text{and}\quad \bbe_{i'} = (0,e_i)
\]
for each integer $i$.  Consider flags $\bV_{\leq\bull}$ and $\bV_{>\bull}$ such that $\bV_{\leq q} = V_{\leq 0}\oplus V_{\leq q}$ and $\bV_{>q} = V_{>0}\oplus V_{>q}$ for each $q$.\footnote{This can be arranged by ordering the basis vectors $\bbe_i$ and $\bbe_{i'}$, in such a way that all the nonpositive unprimed indices come first (in any order), followed by all the primed indices in their usual order, and then finally the positive unprimed indices (in any order).}

Given $m\gg n$, we have the partial flag variety $\bX = \Fl(2m-n,\ldots,2m+n;\bV)$, and for $w \in \Sgp_{(-n,n]} \subset \Sgp_\Z$, we have a Schubert variety
\begin{equation}
 \bbOmega_w = \big\{ \E_\bull \,\big|\, \dim( \E_p \cap \bV_{>q} ) \geq k_w(p,q) \text{ for all }p,q \big\}.
\end{equation}
The degeneracy locus formula gives the same results for $\bbOmega_w$:

\begin{corollary}\label{c.double-deg}
\begin{align}
  [\bbOmega_w] &= \enS_w(\bbc;x;y) & \text{in } H_T^*\bX
\intertext{and}
  [\O_{\bbOmega_w}] & = \enG_w(\bbc;x;z) & \text{in } K_T\bX
\end{align}
where $\bbc = c(\bV-\E_0-\bV_{>0})$ is the Chern class in equivariant cohomology or K-theory.
\end{corollary}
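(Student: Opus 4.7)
My plan is to deduce Corollary~\ref{c.double-deg} directly from Proposition~\ref{p.deg}. The key observation is that $\bX$ is itself a partial flag variety to which that proposition applies: I take $X=\bX$, $V=\bV$ (regarded as a trivial equivariant bundle of rank $4m$), $E_\bull$ the tautological flag on $\bX$, and $F_\bull=\bV_{>\bull}$. The role of ``$m$'' in Proposition~\ref{p.deg} is played by $m' = 2m$, so that $\E_p$ has rank $m'+p$ and $\bV_{>q}$ has rank $m'-q$, matching the ranking conventions there. With this identification, the defining dimension conditions of $\bbOmega_w$ coincide verbatim with those of the degeneracy locus $\Omega_w$ appearing in Proposition~\ref{p.deg}.

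The next step is to check that the Chern class evaluations prescribed by the proposition specialize to the variables appearing in $\enS_w(\bbc;x;y)$ and $\enG_w(\bbc;x;z)$, namely
\[
 c_k = c_k(\bV-\E_0-\bV_{>0}), \quad x_i = c_1((\E_i/\E_{i-1})^*), \quad y_i = c_1(\bV_{>i-1}/\bV_{>i}).
\]
The first identification produces exactly the series $\bbc$ defined in the statement of the corollary, and the second is immediate from the definitions. The only point worth a moment's attention is matching the weight of the line bundle $\bV_{>i-1}/\bV_{>i}$ with the torus character $y_i$: using $\bV_{>q} = V_{>0}\oplus V_{>q}$, the first summand cancels in the quotient, leaving $V_{>i-1}/V_{>i}$, i.e., the line $\langle \bbe_{i'}\rangle$ inside $\bV$, which indeed carries torus weight $y_i$.

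Finally, to invoke Proposition~\ref{p.deg} I need $\bbOmega_w$ to have codimension exactly $\ell(w)$. This is standard, as $\bbOmega_w$ is by construction an opposite Schubert variety in $\bX$ with respect to the flag $\bV_{>\bull}$. Proposition~\ref{p.deg} then delivers both the cohomology and K-theory formulas simultaneously, the latter with $z_i$ in place of $y_i$. I do not anticipate a genuine obstacle: essentially all of the substantive content has already been absorbed into Proposition~\ref{p.deg}, and this corollary amounts to the observation that the doubled construction of $\bV$ fits into the same degeneracy-locus framework after the harmless index shift $m\mapsto 2m$.
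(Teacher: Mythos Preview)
Your proposal is correct and matches the paper's approach exactly: the paper states the corollary immediately after Proposition~\ref{p.deg} with only the sentence ``The degeneracy locus formula gives the same results for $\bbOmega_w$,'' and your argument simply spells out the verification (ranks, Chern class identifications, codimension) that the paper leaves implicit. Your check that $\bV_{>i-1}/\bV_{>i}\cong V_{>i-1}/V_{>i}=\langle\bbe_{i'}\rangle$ carries weight $y_i$ is the only point needing any care, and you handle it correctly.
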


\section{Coproduct and direct sum morphism}\label{s.coprod}

There is a \define{coproduct homomorphism}
\[
  \Delta\colon \Z[c]=\Lambda \to \Lambda \otimes_\Z \Lambda = \Z[c,c']
\]
defined by $c_k \mapsto \sum_{i=0}^k c_{k-i}\cdot c'_{i}$.  This extends linearly to a coproduct of $\Z[y]$-algebras
\[
  \Delta\colon \Lambda[y] \to \Lambda[y]\otimes_{R}\Lambda[y]
\]
as well as to a comodule homomorphism
\[
  \Delta\colon \HH \to \Lambda[y]\otimes_{R} \HH,
\]
where $R=\Z[y]$ and $\HH=\Lambda[x,y]$ as before.

The same formula defines a comodule homomorphism
\[
 \Delta\colon \hat{\HH}^\beta \to \hat{\Lambda[z]}^\beta \otimes_{\hat{R}^\beta} \hat{\HH}^\beta,
\]
where $\hat{\Lambda[z]}^\beta = \Z[c^\beta,z][\![\beta]\!]_{\gr}$, $\hat{R}^\beta=\Z[z][\![\beta]\!]_{\gr}$, and $\hat{\HH}^\beta = \Z[c^\beta,x,z][\![\beta]\!]_{\gr}$ as before.

With our usual conventions on gradings, each of the above versions of $\Delta$ is a homomorphism of graded modules.  We also have a comodule homomorphism at $\beta=-1$, where there is no longer a grading (but the induced filtration is preserved).

The main theorem of Part~II concerns the coefficients $\hat{c}_{\mu,v}^w(y)$ and $\hat{d}_{\mu,v}^w(z;\beta)$ appearing in
\begin{align*}
 \Delta\enS_w(c;x;y) &= \sum_{\mu,v} \hat{c}_{\mu,v}^w(y)\, \enS_{\mu}(c;y)\,\enS_v(c';x;y)
 \intertext{and}
 \Delta\enG^\beta_w(c;x;z) &= \sum_{\mu,v} \hat{d}_{\mu,v}^w(z;\beta)\, \enG^\beta_{\mu}(c;z)\,\enG^\beta_v(c';x;z).
\end{align*}
The coefficients $\hat{c}_{\mu,v}^w$ and $\hat{d}_{\mu,v}^w$ are homogeneous of degree $\ell(w)-|\mu|-\ell(v)$.  Here is our positivity theorem:

\begin{theorem}\label{t.main2}
Consider the ordering of the integers so that the positive integers precede the non-positive ones:
\[
  1\prec 2 \prec \cdots \prec -2 \prec -1 \prec 0.
\]
We have
\begin{align*}
  \hat{c}_{\mu,v}^w(y) &\in \Z_{\geq0}[y_j-y_i : i\prec j]
\intertext{and}
  (-1)^{|\mu|+\ell(v)-\ell(w)} \hat{d}^w_{\mu,v}(z;\beta) &\in \Z_{\geq0}[-\beta, -z_j\ominus z_i : i\prec j].
\end{align*}
\end{theorem}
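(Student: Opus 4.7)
The plan is to apply Theorem~\ref{t.mainH} and Corollary~\ref{c.mainK} to the Schubert variety $\bbOmega_w \subset \bX$ from \S\ref{s.degloci}, using the extra symmetry afforded by the direct sum decomposition $\bV = V_{(-m,m]} \oplus V_{(-m,m]}$.

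First I would translate the coproduct structure constants into intersection numbers on $\bX$. The direct sum map $\boxplus\colon \Gr \times \Fl \to \bX$, sending $(L, E_\bull)$ to the flag with $\E_p = L \oplus E_p$, is a $T$-equivariant closed embedding, and the universal Chern class splits as $\boxplus^* \bbc = c \cdot c'$ (product of Chern series for the Grassmannian and flag factors). Consequently $\boxplus^* \enS_w(\bbc; x; y) = \Delta \enS_w(c; x; y)$, and the expansion $\boxplus^*[\bbOmega_w] = \sum \hat c^w_{\mu,v}\, [\Omega_\mu] \boxtimes [\Omega_v]$ in the K\"unneth basis of $H_T^*(\Gr \times \Fl)$ identifies the coproduct coefficients. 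A parallel translation holds for $\hat d^w_{\mu,v}$ via $[\O_{\bbOmega_w}]$ and $\boxplus^*\enG_w$.

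The key geometric observation is that $\bbOmega_w$ inside $\bX$ admits extra symmetry beyond the expected Borel. Since $w \in \Sgp_{(-n,n]}$ permutes only a finite set of unprimed indices, and each pair $\{\bbe_i, \bbe_{i'}\}$ shares the $T$-weight $y_i$, there is a $T$-weight-zero unipotent subgroup $A \subset GL(\bV)$ preserving both $\bbOmega_w$ and its boundary divisor $\partial \bbOmega_w$. Concretely, $A$ is generated by the root subgroups $\bbe_{i'} \mapsto \bbe_{i'} + t\,\bbe_i$ for $i \leq 0$ and $\bbe_i \mapsto \bbe_i + t\,\bbe_{i'}$ for $i > 0$, together with compatible higher-order entries preserving the filtration $\bV_{>\bull}$; these transformations preserve every rank condition $\dim(\E_p \cap \bV_{>q}) \geq k_w(p,q)$ defining $\bbOmega_w$. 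I would then construct an $S$-factorization $(A, B')$ for the Borel $B$ (stabilizer of $\bV_{\leq \bull}$) in which $B'$ is the residual subgroup whose $T$-characters are exactly the differences $y_j - y_i$ with $i \prec j$. Verifying dominance and smoothness of $A \times B' \times A^- \to Q$ on an $S$-invariant open subset of the appropriate parabolic $Q \supset B$ reduces to a Bruhat-cell computation inside $GL(\bV)$, and by construction the basis $\{y_j - y_i : i \prec j\}$ of the character lattice of $S = T$ is sufficiently positive with respect to $B'$.

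With this setup, Theorem~\ref{t.mainH} gives an expansion of $[\bbOmega_w]$ in $H_T^*\bX$ with coefficients in $\Z_{\geq 0}[y_j - y_i : i \prec j]$. Matching this with the K\"unneth decomposition of $\boxplus^*[\bbOmega_w]$ --- for example by writing each $\hat c^w_{\mu,v}$ as an intersection number $\int_{\bX}[\bbOmega_w]\cdot D_{\mu,v}$ against the class $D_{\mu,v}$ dual to $\enS_\mu(c;y)\enS_v(c';x;y)$ under the Poincar\'e pairing --- yields the asserted positivity of $\hat c^w_{\mu,v}$. For K-theory, Corollary~\ref{c.mainK} applies once the standard facts about $\bbOmega_w$ (rational singularities, Cohen-Macaulay boundary supporting an ample line bundle) are supplemented by $A$- and $S$-invariance of $\partial\bbOmega_w$, which follows from the same symmetry argument. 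The alternating-sign positivity in $\{\ee^{-\beta_i}-1\}$ then converts, via the substitution $\beta_i = y_j - y_i$ and the identity $\ee^{-\beta_i}-1 = -z_j \ominus z_i$ at $\beta = -1$, into the stated bound on $(-1)^{|\mu|+\ell(v)-\ell(w)}\hat d^w_{\mu,v}$, with the formal $-\beta$ factor reflecting the graded parameter of $\hat R^\beta$. The main obstacle I expect is constructing the $S$-factorization $(A,B')$ rigorously --- determining exactly which root subgroups of $GL(\bV)$ preserve $\bbOmega_w$, matching the $T$-characters of $B'$ to $\{y_j - y_i : i \prec j\}$, and verifying the dominance and smoothness of $A\times B'\times A^-\to Q$ on an $S$-invariant open subset --- together with the matching step that extracts $\hat c^w_{\mu,v}$ from the Schubert coefficients of $[\bbOmega_w]$ under $\boxplus^*$.
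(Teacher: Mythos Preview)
Your proposal has the roles of the two varieties reversed, and this produces two related gaps.

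The variety to which Theorem~A should be applied is not $\bbOmega_w$ but $Y = \boxplus(X_\mu \times X_v)$, the image under direct sum of a product of $B$-stable Schubert varieties. By push--pull and Poincar\'e duality,
\[
  \hat c^w_{\mu,v} \;=\; \int_{\Gr(V)\times\Fl(V)} \boxplus^*[\bbOmega_w]\cdot [X_\mu\times X_v] \;=\; \int_{\Fl(\bV)} [\bbOmega_w]\cdot [Y],
\]
and this is exactly the coefficient of $[\bX_w]$ in the expansion $[Y] = \sum_u c_u\,[\bX_u]$. Theorem~A, applied to $Y$, controls precisely these coefficients. Applying Theorem~A to $\bbOmega_w$ instead yields the coefficients $\int[\bbOmega_w]\cdot[\bbOmega_u]$ in the expansion $[\bbOmega_w]=\sum_u c_u[\bX_u]$, which are not the coproduct constants; your matching step would then require a further positivity for $\int[\bX_u]\cdot[Y]$ that you have not supplied. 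The same reversal repairs the K-theory argument: one expands $[\O_Y(-\partial Y)] = \boxplus_*(\xi_\mu\times\xi_v)$ in the basis $\bm\xi_w$, with $\partial Y = \boxplus(\partial X_\mu\times X_v)+\boxplus(X_\mu\times\partial X_v)$.

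Correspondingly, the needed extra symmetry lives on $Y$, not on $\bbOmega_w$. Since $X_\mu$ and $X_v$ are $B$-stable, $Y$ is invariant under the block-diagonal $B\times B \subset GL(V)\times GL(V)\subset GL(\bV)$, whose block-diagonal upper-unitriangular part furnishes the group $A$. Your proposed generators $\bbe_{i'}\mapsto\bbe_{i'}+t\,\bbe_i$ for $i\le 0$ do \emph{not} preserve $\bbOmega_w$: for $q<i\le 0$ one has $\bbe_{i'}\in\bV_{>q}$ but $\bbe_i\notin\bV_{>q}=V_{>0}\oplus V_{>q}$, so the flag $\bV_{>\bull}$ is not preserved and the defining rank conditions need not be. Once the variety is switched to $Y$, the $S$-factorization $(A,\B')$ does exist, with $\B'\subset\B$ as in Figure~\ref{f.borels}, and is verified by an explicit block LDU factorization (Lemma~\ref{l.s-factor}); the basis $\{-\alpha_i:i\ne 0\}\cup\{\alpha_0\}$ for the diagonal $S\cong T\subset\mathds{T}$ is then sufficiently positive for $\B'$, and Theorem~A gives the result directly.
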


Since $\hat{d}^w_{\mu,v}(z;\beta)$ is homogeneous of degree $\ell(w)-|\mu|-\ell(v)$, the second statement is equivalent to saying
\[
 \hat{d}^w_{\mu,v}(z;\beta) \in \Z_{\geq0}[\beta, z_j\ominus z_i : i\prec j].
\]
This version clearly specializes to the first statement at $\beta=0$.  On the other hand, as noted in Remark~\ref{r.beta}, $\hat{d}^w_{\mu,v}(z;\beta)$ is the unique homogeneous lift of $\hat{d}^w_{\mu,v} := \hat{d}^w_{\mu,v}(z;-1)$, and it suffices to establish the corresponding statement
\begin{equation}\label{e.KTpos}
 (-1)^{|\mu|+\ell(v)-\ell(w)} \hat{d}^w_{\mu,v} \in \Z_{\geq0}[\ee^{y_i-y_j}-1 : i\prec j].
\end{equation}
(Recall that $-z_j\ominus z_i = \ee^{y_i-y_j}-1$ at $\beta=-1$.)  This is what we will prove.

The proof consists of an application of Theorem~A (from Part~I) to the situation of \cite[\S8]{infschub}, which we now describe.  Given any $w\in\Sgp_\Z$, choose $m\gg n\gg 0$, large enough so that $w\in\Sgp_{(-n,n]}$.  Write $V=V_{(-m,m]}$ for the vector space spanned by $e_i$, for $-m<i\leq m$, and let $\bV=V\oplus V$.  As above, we write $\bbe_i = (e_i,0)$ and $\bbe_{i'} = (0,e_i)$ for the standard basis of $\bV$.  Let
\begin{eqnarray*}
 & \Gr(V) = \Gr(m,V), \quad \Gr(\bV) = \Gr(2m,\bV), \\
  &\Fl(V) = \Fl(m-n,\ldots,m+n;V), \;\text{and} \; \Fl(\bV) = \Fl(2m-n,\ldots,2m+n;\bV)
\end{eqnarray*}
be the indicated Grassmannians and partial flag varieties.

There is a \define{direct sum morphism}
\begin{align}
 \boxplus\colon \Gr(V)\times \Gr(V) \to \Gr(\bV)
\end{align}
sending a pair of subspaces $(E\subset V, F\subset V)$ to the subspace $E\oplus F \subset \bV$.  This is equivariant for the diagonal $T$-action on $\Gr(V)\times\Gr(V)$ and the action on $\Gr(\bV)$ induced by scaling both $\bbe_i$ and $\bbe_{i'}$ by the character $y_i$, i.e., the diagonal action on $\bV$.

As an algebra over $\Z[y]$, the equivariant cohomology ring of $\Gr(V)$ is generated by the Chern classes $c_k=c_k(V-V_{>0}-\tS)$, where $\tS\subset V$ denotes the tautological subbundle.  The same is true for $\Gr(\bV)$, writing $\bbc_k = \bbc_k(\bV-\bV_{>0}-\tbS)$, with $\tbS\subset\bV$ the tautological bundle.  So $H_T\Gr(V)$ and $H_T\Gr(\bV)$ are quotients of $\Lambda[y]$.  By choosing $m\gg0$, the relations are in high enough degree to be irrelevant.

We have $\boxplus^*\tbS= \tS\oplus \tS'$ as vector bundles on $\Gr(V)\times\Gr(V)$, where $\tS$ is the tautological bundle on the first factor and $\tS'$ is the one for the second factor.  So, by the Whitney sum formula, we have
\[
  \boxplus^*\bbc_k = c_k + c_{k-1}\cdot c'_1 + \cdots + c_1\cdot c'_{k-1} + c'_k,
\]
where $c$ and $c'$ are the corresponding Chern classes, from the first and second factors.  That is, $\boxplus^*=\Delta$ is the coproduct homomorphism on $\Lambda$.

There is an analogous direct sum morphism
\begin{align}
 \boxplus\colon \Gr(V)\times \Fl(V) \to \Fl(\bV),
\end{align}
and the pullback on equivariant cohomology agrees with the comodule homomorphism
\[
  \Delta\colon \HH \to \Lambda[y]\otimes_{\Z[y]} \HH
\]
described above.  Here $x_i = c_1( (S_i/S_{i-1})^* )$, where $S_\bull$ is the tautological flag of bundles on $\Fl(V)$, and $y_i$ is the character scaling the coordinate $e_i$, so $y_i=c_1(V_{>i-1}/V_{>i})$.

Writing $\bbc$, $c$, and $c'$ for the same Chern classes in equivariant K-theory, the direct sum pullback
\[
  \boxplus^*\colon K_T\Fl(\bV) \to K_T\Gr(V)\otimes_{R(T)} K_T\Fl(V)
\]
also agrees with the comodule homomorphism $\Delta$ on $\hat{\HH}^\beta$ (after quotienting by relations in high degree and evaluating at $\beta=-1$).

Consider Schubert varieties $\Omega_\mu\subset \Gr(V)$, $\Omega_v \subset \Fl(V)$, and $\bbOmega_w\subset \Fl(\bV)$ as in \S\ref{s.degloci}.  
By Proposition~\ref{p.deg} and Corollary~\ref{c.double-deg}, the classes of these Schubert varieties are represented by Schubert and Grothendieck polynomials.  So we have the following lemma:

\begin{lemma}
The  coefficients $\hat{c}_{\mu,v}^w$ and $\hat{d}_{\mu,v}^w$ are the same as those appearing in formulas for $\boxplus^*$.  That is,
\begin{align}
 \boxplus^*[\bbOmega_w] &= \sum_{\mu,v} \hat{c}_{\mu,v}^w \, [\Omega_\mu]\times [\Omega_v] \label{e.hatc}
\intertext{and}
 \boxplus^*[\O_{\bbOmega_w}] &= \sum_{\mu,v} \hat{d}_{\mu,v}^w\, [\O_{\Omega_\mu}]\times [\O_{\Omega_v}]. \label{e.hatd}
\end{align}
\end{lemma}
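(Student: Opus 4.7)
The plan is to reduce the geometric identities \eqref{e.hatc} and \eqref{e.hatd} to the algebraic definitions of $\hat{c}^w_{\mu,v}$ and $\hat{d}^w_{\mu,v}$ by computing how $\boxplus^*$ acts on the generators appearing in the degeneracy locus formulas of Proposition~\ref{p.deg} and Corollary~\ref{c.double-deg}.

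First I would identify the pullbacks of $\bbc$, $x_i$, and $y_i$ separately. Since $\boxplus$ is $T$-equivariant for the diagonal action and $y_i$ is a torus weight, $\boxplus^* y_i = y_i$, and likewise $\boxplus^* z_i = z_i$ in K-theory. For $x_i$: the tautological flag $E_\bull$ on $\Fl(\bV)$ pulls back under $(E, S_\bull)\mapsto E\oplus S_\bull$ to the flag whose $p$th piece is $E\oplus S_p$, so the successive quotient $E_i/E_{i-1}$ pulls back to $S_i/S_{i-1}$ on $\Fl(V)$, giving $\boxplus^* x_i = x_i$. For $\bbc_k$: the rank-$2m$ piece $E_0$ pulls back to $\tS\oplus S_0$, where $\tS$ is the tautological rank-$m$ bundle on $\Gr(V)$ and $S_0$ the rank-$m$ tautological bundle on $\Fl(V)$, and $\boxplus^*\bV_{>0} = V_{>0}\oplus V_{>0}$. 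Multiplicativity of the total Chern class (valid both in $H_T^*$ and $K_T$, for $c$ and for $c^\beta$) then gives $\boxplus^*\bbc = c\cdot c'$, where $c = c(V-\tS-V_{>0})$ on $\Gr(V)$ and $c' = c(V-S_0-V_{>0})$ on $\Fl(V)$. Extracting degree-$k$ parts, $\boxplus^*\bbc_k = \sum_{i=0}^k c_{k-i}\, c'_i = \Delta c_k$, and analogously $\boxplus^*\bbc^\beta_k = \Delta c^\beta_k$.

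Combining these identifications with $[\bbOmega_w] = \enS_w(\bbc;x;y)$ from Corollary~\ref{c.double-deg} then yields
\[
  \boxplus^*[\bbOmega_w] \;=\; \enS_w(\Delta c;\,x;\,y) \;=\; \Delta\enS_w(c;x;y) \;=\; \sum_{\mu,v} \hat{c}^w_{\mu,v}(y)\,\enS_\mu(c;y)\cdot \enS_v(c';x;y),
\]
the final equality being the definition of $\hat{c}^w_{\mu,v}$. Under the K\"unneth decomposition of $H_T^*(\Gr(V)\times\Fl(V))$, Proposition~\ref{p.deg} identifies $\enS_\mu(c;y)$ with $[\Omega_\mu]$ (no $x$ variables appear since $\mu$ is Grassmannian) and $\enS_v(c';x;y)$ with $[\Omega_v]$, proving \eqref{e.hatc}. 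The identical argument with $\enG^\beta_w$, $\bbc^\beta$, and $z$ in place of $\enS_w$, $\bbc$, and $y$, followed by evaluation at $\beta=-1$, proves \eqref{e.hatd}. This is really a bookkeeping argument rather than a serious obstacle: the only delicate point is the rank matching $\boxplus^*E_p = \tS\oplus S_p$, which is forced by $\dim E_p = 2m+p$ and $\dim S_p = m+p$, and the choice $m\gg n\gg 0$ ensures that all relations in $H_T^*\Fl(\bV)$ and $K_T\Fl(\bV)$ lie in degrees too high to influence the coefficients $\hat{c}^w_{\mu,v}$ and $\hat{d}^w_{\mu,v}$.
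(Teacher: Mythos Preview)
Your proof is correct and follows essentially the same approach as the paper: the paper establishes (in the paragraphs immediately preceding the lemma) that $\boxplus^*$ agrees with $\Delta$ on the Chern-class generators via the Whitney sum formula, and then invokes Proposition~\ref{p.deg} and Corollary~\ref{c.double-deg} to identify the Schubert classes with the polynomials---exactly as you do. You have simply spelled out in more detail the identifications $\boxplus^* x_i = x_i$, $\boxplus^* y_i = y_i$, and $\boxplus^*\bbc = c\cdot c'$ that the paper summarizes more tersely.
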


At this point, we fix an ordering on the basis vectors for $V$ and $\bV$.  The order on the $e_i$ is the usual one, coming from the standard order on the integers $-m+1,\ldots,m$.  With respect to this order, the Borel $B$ preserving the flag $V_{\leq\bull}$ is the standard one of upper-triangular matrices.  The opposite Borel $B^-$ preserves $\Omega_\mu$ and $\Omega_v$; these are transverse to $B$-invariant Schubert varieties $X_\mu \subset \Gr(V)$ and $X_v\subset \Fl(V)$, respectively.

The basis vectors for $\bV$ are ordered by
\[
  0,-1,\ldots,-m+1,(-m+1)',\ldots,-1',0',1',\ldots,m',m,m-1,\ldots,2,1.
\]
With respect to this order, the Borel $\B$ preserving the flag $\bV_{\leq\bull}$ is the one shown in in the middle diagram of Figure~\ref{f.borels}.  The opposite Borel $\B^-$ preserves $\bbOmega_w$, which is transverse to a $\B$-invariant Schubert variety $\bX_w\subset \Fl(\bV)$.

Applying Poincar\'e duality, \eqref{e.hatc} becomes
\begin{align}
 [Y] &= \sum_{w} \hat{c}_{\mu,v}^w \, [\bX_w] \label{e.hatc2}
\end{align}
where
\[
  Y=\boxplus(X_\mu\times X_v) \subset \Fl(\bV),
\] so $[Y] = \boxplus_*[X_\mu\times X_v]$.  Recalling that in K-theory, the dual classes are $\xi_v = [\O_{X_v}(-\partial X_v)]$ and $\bm\xi_w = [\O_{\bX_w}(-\partial \bX_w)]$, \eqref{e.hatd} becomes
\begin{align}
 [\O_Y(-\partial Y)] &= \sum_{w} \hat{d}_{\mu,v}^w\, \bm\xi_w ,\label{e.hatd2}
\end{align}
where the divisor is
\[
 \partial Y = \boxplus(\partial X_\mu \times X_v) + \boxplus(X_\mu\times \partial X_v),
\]
so $[\O_Y(-\partial Y)] = \boxplus_*[\xi_\mu\times \xi_v]$.  The goal is to apply Theorem~A to \eqref{e.hatc2} and \eqref{e.hatd2}, for an appropriate choice of sufficiently positive basis.

Let $\mathds{T}\subset \B \subset GL(\bV)$ be the standard torus, so $\mathds{T}\isom T \times T$.  Let $S\isom T \subset T\times T \isom \mathds{T}$ be the diagonal.  For a basis of the character lattice of $S$, choose
\[
  \{ -\alpha_i : i\neq 0\} \cup \{ \alpha_0 \},
\]
where $\alpha_i = y_i-y_{i+1}$ is the usual simple root.  Evidently
\[
  \Z_{\geq0}[\alpha_0, -\alpha_i : i\neq 0] = \Z_{\geq 0}[y_j-y_i : i\prec j].
\]
Since $-z_{i+2}\ominus z_{i} = (-z_{i+2}\ominus z_{i+1}) + (-z_{i+1}\ominus z_{i}) + (-\beta)(-z_{i+2}\ominus z_{i+1})(-z_{i+1}\ominus z_{i})$, we also have
\[
  \Z_{\geq0}[\ee^{-\alpha_0}-1, \ee^{\alpha_i}-1 : i\neq 0] = \Z_{\geq 0}[\ee^{y_i-y_j}-1 : i\prec j].
\]
So the assertions of Theorem~\ref{t.main2} follow from Theorem~A, once we verify the hypotheses of the latter theorem:

\begin{itemize}[label=\bull,itemsep=2pt]
\item The subgroup $\B' \subset \B$ is chosen to be generated by the torus $\mathds{T}$, along with entries in the following positions: $(i,j)$ for $i>j$ or $j>0\geq i$; $(i,j')$ for $0\geq i\geq j$ or $j>0\geq i$; $(i',j)$ for $i>j>0$ or $j>0\geq i$; $(i',j')$ for $j>0\geq i$.  This is depicted in Figure~\ref{f.borels}.

\item By construction, the chosen basis for the character lattice of $S$ is sufficiently positive for $\B'$.

\item The subvariety $Y\subset \Fl(\bV)$ is invariant for the action of the subgroup $B\times B$ in $GL(V)\times GL(V) \subset GL(\bV)$.  In particular, $Y$ is invariant for the subtorus $S$, as well as for the unipotent group $A$ consisting of upper-triangular matrices with $1$'s on the diagonal, and possibly nonzero entries in positions $(i,j)$ and $(i',j')$ for $i<j\leq 0$, or for $0<i<j$.  This is depicted in Figure~\ref{f.borels2}.

\item $Y$ has rational singularities, since it is isomorphic to $X_\mu\times X_v$.

\item The divisor $\partial Y$ is also invariant for both $S$ and $A$.

\item $\partial Y$ is Cohen-Macaulay and supports an ample line bundle, being isomorphic to the boundary divisor of the Schubert variety $X_\mu\times X_v \subset \Gr(V)\times\Fl(V)$.
\end{itemize}
It remains to check that the pair $(A,\B')$ is an $S$-factorization of $\B$.  This is the content of the following lemma, which concludes the proof of Theorem~\ref{t.main2}.

\begin{figure}
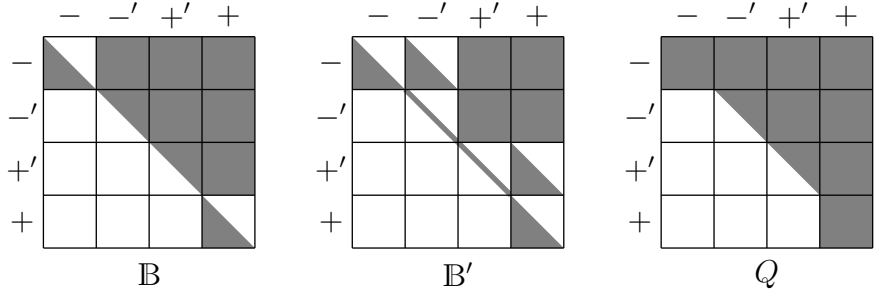

\begin{center}
\pspicture(0,0)(100,100)

\pspolygon*[linecolor=gray,fillcolor=gray](10,90)(10,70)(30,70)(10,90)
\pspolygon*[linecolor=gray,fillcolor=gray](30,90)(90,90)(90,70)(30,70)(30,90)

\pspolygon*[linecolor=gray,fillcolor=gray](30,70)(50,70)(50,50)(30,70)
\pspolygon*[linecolor=gray,fillcolor=gray](50,70)(90,70)(90,50)(50,50)(50,70)

\pspolygon*[linecolor=gray,fillcolor=gray](50,50)(70,50)(70,30)(50,50)
\pspolygon*[linecolor=gray,fillcolor=gray](70,50)(90,50)(90,30)(70,30)(70,50)

\pspolygon*[linecolor=gray,fillcolor=gray](70,30)(90,10)(70,10)(70,30)

\psline(10,10)(90,10)
\psline(10,30)(90,30)
\psline(10,50)(90,50)
\psline(10,70)(90,70)
\psline(10,90)(90,90)

\psline(10,10)(10,90)
\psline(30,10)(30,90)
\psline(50,10)(50,90)
\psline(70,10)(70,90)
\psline(90,10)(90,90)

\rput(2,20){$+$}
\rput(2,40){$+'$}
\rput(2,60){$-'$}
\rput(2,80){$-$}

\rput(20,98){$-$}
\rput(40,98){$-'$}
\rput(60,98){$+'$}
\rput(80,98){$+$}

\rput(50,0){$\B$}

\endpspicture
\hspace{.2in}
\pspicture(0,0)(100,100)

\pspolygon*[linecolor=gray,fillcolor=gray](10,90)(10,70)(30,70)(10,90)
\pspolygon*[linecolor=gray,fillcolor=gray](30,90)(30,70)(50,70)(30,90)
\pspolygon*[linecolor=gray,fillcolor=gray](50,90)(90,90)(90,70)(50,70)(50,90)

\psline[linecolor=gray,linewidth=2pt](30,70)(50,50)
\pspolygon*[linecolor=gray,fillcolor=gray](50,70)(90,70)(90,50)(50,50)(50,70)

\psline[linecolor=gray,linewidth=2pt](50,50)(70,30)
\pspolygon*[linecolor=gray,fillcolor=gray](70,50)(90,30)(70,30)(70,50)

\pspolygon*[linecolor=gray,fillcolor=gray](70,30)(90,10)(70,10)(70,30)

\psline(10,10)(90,10)
\psline(10,30)(90,30)
\psline(10,50)(90,50)
\psline(10,70)(90,70)
\psline(10,90)(90,90)

\psline(10,10)(10,90)
\psline(30,10)(30,90)
\psline(50,10)(50,90)
\psline(70,10)(70,90)
\psline(90,10)(90,90)

\rput(2,20){$+$}
\rput(2,40){$+'$}
\rput(2,60){$-'$}
\rput(2,80){$-$}

\rput(20,98){$-$}
\rput(40,98){$-'$}
\rput(60,98){$+'$}
\rput(80,98){$+$}

\rput(50,0){$\B'$}

\endpspicture
\hspace{.2in}
\pspicture(0,0)(100,100)

\pspolygon*[linecolor=gray,fillcolor=gray](10,90)(10,70)(30,70)(30,90)(10,90)
\pspolygon*[linecolor=gray,fillcolor=gray](30,90)(90,90)(90,70)(30,70)(30,90)

\pspolygon*[linecolor=gray,fillcolor=gray](30,70)(50,70)(50,50)(30,70)
\pspolygon*[linecolor=gray,fillcolor=gray](50,70)(90,70)(90,50)(50,50)(50,70)

\pspolygon*[linecolor=gray,fillcolor=gray](50,50)(70,50)(70,30)(50,50)
\pspolygon*[linecolor=gray,fillcolor=gray](70,50)(90,50)(90,30)(70,30)(70,50)

\pspolygon*[linecolor=gray,fillcolor=gray](70,30)(90,30)(90,10)(70,10)(70,30)

\psline(10,10)(90,10)
\psline(10,30)(90,30)
\psline(10,50)(90,50)
\psline(10,70)(90,70)
\psline(10,90)(90,90)

\psline(10,10)(10,90)
\psline(30,10)(30,90)
\psline(50,10)(50,90)
\psline(70,10)(70,90)
\psline(90,10)(90,90)

\rput(2,20){$+$}
\rput(2,40){$+'$}
\rput(2,60){$-'$}
\rput(2,80){$-$}

\rput(20,98){$-$}
\rput(40,98){$-'$}
\rput(60,98){$+'$}
\rput(80,98){$+$}

\rput(50,0){$Q$}

\endpspicture

\end{center}

\caption{Schematic depiction of groups $\B$, $\B'$, and $Q$.  The row and column labels indicate nonpositive ($-$), nonpositive primed ($-'$), positive primed ($+'$), and positive ($+$) indices; within each group, integers are ordered according to the standard order.  \label{f.borels} }
\end{figure}

\begin{lemma}\label{l.s-factor}
Let $\B'_\circ \subset \B'$ be the open subset having nonvanishing entries in positions $(i,i')$ for $i\leq 0$ and $(i',i)$ for $i>0$.  Let $A^- = A \cap \B^-$.  The multiplication morphism
\[
  A \times \B'_\circ \times A^{-} \to GL(\bV)
\]
is an open embedding into a parabolic subgroup $Q$ containing $\B$.
\end{lemma}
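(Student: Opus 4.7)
The plan is an explicit matrix computation organized around the block structure depicted in Figure~\ref{f.borels}. First identify $Q$ as the parabolic subgroup of $GL(\bV)$ stabilizing the partial flag whose steps are $V_-$, the running partial sums within $V_-'$ and $V_+'$, and $V_- \oplus V_-' \oplus V_+'$; this is a standard parabolic containing $\B$, with Levi factor $L_Q \cong GL(V_-) \times (\C^*)^{2m} \times GL(V_+)$ and unipotent radical $U_Q$ (the strictly above-block-diagonal part). A direct count using the entry lists recorded above the lemma confirms $\dim A + \dim \B'_\circ + \dim A^- = 9m^2 + m = \dim Q$.

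Next, exploit the product decomposition $A = A_+ \times A^-$ as varieties, where $A_+ := A \cap \B$ comprises the primed entries of $A$ (which lie in $U_Q$) and $A^- = A \cap \B^-$ comprises the unprimed entries (which lie in the $GL(V_\pm)$-factors of $L_Q$). Writing any $b' \in \B'$ as $b' = b'_L \cdot b'_U$ with $b'_L \in \B' \cap L_Q$ and $b'_U \in \B' \cap U_Q$, the multiplication map $(a_+, a_-, b'_L, b'_U, a^-) \mapsto a_+ a_- b' a^-$ splits in the Levi-radical decomposition $Q = U_Q \rtimes L_Q$ as
\begin{equation*}
l = a_- \cdot b'_L \cdot a^-, \qquad u = a_+ \cdot \mathrm{Ad}\big(l \cdot (a^-)^{-1}\big)(b'_U).
\end{equation*}

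The open-embedding claim reduces to two compatible subchecks. In the Levi, the subproduct $a_- \cdot b'_L \cdot a^-$ restricted to each $GL(V_\pm)$-factor is the classical $U^- \cdot B \cdot U^-$ big cell, dominant with generic fibers parametrized by the rightmost $U^-$-factor. In the unipotent radical, the pivots defining $\B'_\circ$---at positions $(i, i')$ for $i \leq 0$ and $(i', i)$ for $i > 0$, living in the off-block-diagonal sectors $(-, -')$ and $(+', +)$ of $U_Q$---are nonvanishing by assumption, and this nonvanishing makes $\mathrm{Ad}(l \cdot (a^-)^{-1})$ depend faithfully on $a^-$ when tracked through those specific entries. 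Varying $a^-$ within a Levi fiber therefore sweeps out the complementary $\dim A^-$ dimensions of $U_Q$ bijectively, and combined with the matching total dimension, the multiplication map is an open immersion; smoothness then follows automatically since source and target are smooth irreducible varieties of equal dimension.

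The chief technical obstacle is the coupling verification in the last step: showing that the pivots of $\B'_\circ$ force $\mathrm{Ad}(l \cdot (a^-)^{-1})$ to depend faithfully on $a^-$. This reduces to a finite calculation in the two pivot blocks, directly analogous to the explicit $2 \times 2$ computation (with pivots $d$ and $h$ coupling $u$ and $v$) verifying the analogous factorization in Example~\ref{ex.s-fact}; the same mechanism now operates in parallel across all pivot positions.
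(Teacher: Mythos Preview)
Your approach is genuinely different from the paper's, and the core idea is sound, but the crucial step---the ``coupling verification''---is only asserted, not carried out, and when you do carry it out you will find it amounts to the very LDU calculation the paper performs directly.

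The paper does not pass through the Levi--radical decomposition at all.  Instead it writes generic elements $a\in A$, $\tilde a\in A^-$, $b\in\B'_\circ$ explicitly in the $4\times 4$ block form of Figure~\ref{f.borels}, multiplies them out, and then inverts block by block.  The first step is to look at the $(-,-')$ block of a generic $q\in Q$: this block equals $a_{-,-}\,b_{-,-'}$, a product of an upper-unitriangular matrix and a lower-triangular matrix with nonzero diagonal (precisely the pivots $(i,i')$ defining $\B'_\circ$), so LDU factorization recovers $a_{-,-}$ and $b_{-,-'}$ uniquely.  With $a_{-,-}$ in hand, $a_{-,-}^{-1}q_{-,-}=b_{-,-}\tilde a_{-,-}$ is again a lower-times-upper-unitriangular product, recovering $b_{-,-}$ and $\tilde a_{-,-}$.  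Continuing through the blocks peels off all factors uniquely.

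Your Levi--radical decomposition correctly separates the picture into a Levi map $U^-\times B\times U^-\to GL$ (surjective with $U^-$-fibers) and a unipotent-radical piece where the ambiguity in $a^-$ must be resolved.  But to resolve it you must examine how $a^-$ enters the $(-,-')$ and $(+',+)$ blocks of $u=a_+\cdot\mathrm{Ad}(l(a^-)^{-1})(b'_U)$: since $a_+$ has no support in those blocks, one finds $u_{-,-'} = l_-\,(a^-_-)^{-1}\,(b'_U)_{-,-'}\,(b'_{L,-'})^{-1}$, and recovering $a^-_-$ from $l_-^{-1}u_{-,-'}$ is again an upper-unitriangular times lower-triangular factorization, valid exactly when the pivot diagonal of $(b'_U)_{-,-'}$ is nonzero.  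This is the same LDU step as in the paper's first move.  So the Levi--radical framing adds conceptual scaffolding but does not bypass the concrete factorization; the paper's direct block argument reaches the same conclusion with less overhead.  (Your reference to pivots $d$ \emph{and} $h$ in Example~\ref{ex.s-fact} is also slightly off: only $d$ plays the pivot role there.)
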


The parabolic group is depicted schematically in Figure~\ref{f.borels}.  The functions whose nonvanishing defines $\B'_\circ$ are $S$-invariant, and an open embedding is smooth and dominant, so the lemma indeed shows $(A,\B')$ is an $S$-factorization.

\begin{figure}
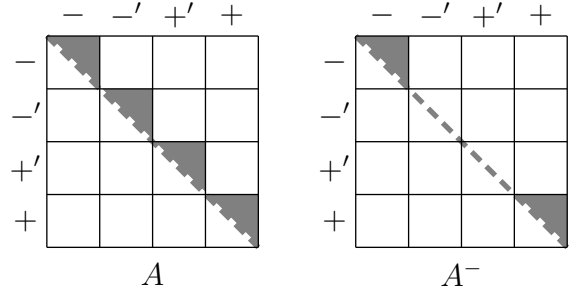

\begin{center}
\pspicture(0,0)(100,100)

\pspolygon*[linecolor=gray,fillcolor=gray](11,90)(30,90)(30,71)(11,90)

\pspolygon*[linecolor=gray,fillcolor=gray](31,70)(50,70)(50,51)(31,70)

\pspolygon*[linecolor=gray,fillcolor=gray](51,50)(70,50)(70,31)(51,50)

\pspolygon*[linecolor=gray,fillcolor=gray](71,30)(90,30)(90,11)(71,30)

\psline[linecolor=gray,linestyle=dashed,linewidth=2pt](10,90)(90,10)

\psline(10,10)(90,10)
\psline(10,30)(90,30)
\psline(10,50)(90,50)
\psline(10,70)(90,70)
\psline(10,90)(90,90)

\psline(10,10)(10,90)
\psline(30,10)(30,90)
\psline(50,10)(50,90)
\psline(70,10)(70,90)
\psline(90,10)(90,90)

\rput(2,20){$+$}
\rput(2,40){$+'$}
\rput(2,60){$-'$}
\rput(2,80){$-$}

\rput(20,98){$-$}
\rput(40,98){$-'$}
\rput(60,98){$+'$}
\rput(80,98){$+$}

\rput(50,0){$A$}

\endpspicture
\hspace{.2in}
\pspicture(0,0)(100,100)

\pspolygon*[linecolor=gray,fillcolor=gray](11,90)(30,90)(30,71)(11,90)

\pspolygon*[linecolor=gray,fillcolor=gray](71,30)(90,30)(90,11)(71,30)

\psline[linecolor=gray,linestyle=dashed,linewidth=2pt](10,90)(90,10)

\psline(10,10)(90,10)
\psline(10,30)(90,30)
\psline(10,50)(90,50)
\psline(10,70)(90,70)
\psline(10,90)(90,90)

\psline(10,10)(10,90)
\psline(30,10)(30,90)
\psline(50,10)(50,90)
\psline(70,10)(70,90)
\psline(90,10)(90,90)

\rput(2,20){$+$}
\rput(2,40){$+'$}
\rput(2,60){$-'$}
\rput(2,80){$-$}

\rput(20,98){$-$}
\rput(40,98){$-'$}
\rput(60,98){$+'$}
\rput(80,98){$+$}

\rput(50,0){$A^-$}

\endpspicture

\hspace{.2in}

\end{center}

\caption{Schematic depiction of groups $A$ and $A^-$. \label{f.borels2} }
\end{figure}

\begin{proof}
The groups $A$, $\B'$, and $A^-$ are all subgroups of $Q$, so the image of the multiplication map lies in $Q$.  The assertion about open embeddings comes from breaking the matrices up into $16$ $m\times m$ blocks and applying LDU factorization (i.e., top-cell Bruhat decomposition).

To spell this out, consider generic elements
\[
  a = \left(\begin{array}{c|c|c|c}a_{-,-} &  &  &  \\ \hline  & a_{-',-'} &  &  \\ \hline  &  & a_{+',+'} &  \\ \hline  &  &  & a_{+,+}\end{array}\right)
\quad \text{and} \quad \tilde{a} = \left(\begin{array}{c|c|c|c}\tilde{a}_{-,-} &  &  &  \\ \hline  & 1 &  &  \\ \hline  &  & 1 &  \\ \hline  &  &  & \tilde{a}_{+,+}\end{array}\right)\]
in $A$ and $A^-$, respectively.  Similarly, write
\[
  b = \left(\begin{array}{c|c|c|c}b_{-,-} & b_{-,-'} & b_{-,+'} & b_{-,+} \\\hline  & b_{-',-'} & b_{-',+'} & b_{-',+} \\\hline  &  & b_{+',+'} & b_{+',+} \\\hline  &  &  & b_{+,+}\end{array}\right)
\]
in $\B'_\circ$.  Each entry represents an $m\times m$ block, with rows and columns on the indices indexed by the subscripts.  The blocks $a_{\bull,\bull}$ and $\tilde{a}_{\bull,\bull}$ are  upper-unitriangular matrices.  The blocks $b_{-,-}$, $b_{-,-'}$, $b_{+',+}$, and $b_{+,+}$ are lower-triangular, with nonzero diagonal entries.  The blocks $b_{-',-'}$ and $b_{+',+'}$ are diagonal with nonzero entries.  The remaining four (nonzero) blocks are arbitrary $m\times m$ matrices.  
(Again, these are depicted in Figures~\ref{f.borels} and \ref{f.borels2}.)

The image of multiplication $A\times\B'_\circ \times A^- \to Q$ consists of matrices of the form
\[
 \left(\begin{array}{c|c|c|c} a_{-,-} b_{-,-} \tilde{a}_{-,-} & a_{-,-} b_{-,-'} & a_{-,-} b_{-,+'} & a_{-,-} b_{-,+} \tilde{a}_{+,+} \\ \hline  & a_{-',-'} b_{-',-'} & a_{-',-'} b_{-',+'} & a_{-',-'} b_{-',+} \tilde{a}_{+,+} \\ \hline  &  & a_{+',+'} b_{+',+'} & a_{+',+'} b_{+',+} \tilde{a}_{+,+} \\ \hline  &  &  & a_{+,+'} b_{+,+} \tilde{a}_{+,+}\end{array}\right).
\]

We write an element $q\in Q$ by breaking it into blocks $q_{\bull,\bull}$ according to the same pattern.  If $q$ is generic, then the block $q_{-,-'}$ can be written uniquely as $q_{-,-'} = a_{-,-} b_{-,-'}$, a product of an upper-unitriangular matrix and a lower-triangular matrix.  Next, we can uniquely write $a_{-,-}^{-1} q_{-,-} = b_{-,-} \tilde{a}_{-,-}$ as a product of a lower-triangular matrix and an upper-uni{\-}triangular matrix.  Continuing in this way, we uniquely determine all entries of $a$, $\tilde{a}$, and $b$.
\end{proof} 

\section{Examples}\label{s.examples}

In \cite{infschub}, it was shown that the coefficients $\hat{c}_{\mu,v}^w$ can be computed by expanding the product of a Schubert polynomial by another Schubert polynomial in which the $y$-variables are permuted \cite[Corollary~8.10]{infschub}.  The same can be done in K-theory.  Following \cite[\S8]{infschub}, let $w_\mu$ be the Grassmannian permutation associated to the partition $\mu$, and suppose both $w_\mu$ and $v$ lie in $\Sgp_{(-m,m]}$.  A permutation $\mu\oslash_m v$ in $\Sgp_{(-2m,2m]}$ is constructed by setting
\begin{align*}
 \mu\oslash_m v &= [w_\mu(-m+1)-m,\ldots,\,w_\mu(0)-m,\,v(-m+1)+m,\ldots \\ 
 & \qquad \ldots,\, v(m)+m,\,w_\mu(1)-m,\ldots,\,w_\mu(m)-m ].
\end{align*}
Let $x^{(m)} = [-2m+1,\ldots,\,-m,\,1,\ldots,\,2m,\,-m+1,\ldots,\,0]$.

\begin{proposition}\label{p.dirsum-push}
We have
\[
  \boxplus_*[\xi_\mu \times \xi_v] = [\O_{X_{\mu\oslash_m v} \cap \Omega_{x^{(m)}}}(-\partial X_{\mu\oslash_m v} )] = \xi_{\mu\oslash_m v} \cdot [\O_{\Omega_{x^{(m)}}}].
\]
\end{proposition}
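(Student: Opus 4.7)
The plan is to reduce Proposition~\ref{p.dirsum-push} to two geometric facts: first, that $\boxplus$ restricts to a closed immersion $X_\mu\times X_v \hookrightarrow \Fl(\bV)$ with image $Y := X_{\mu\oslash_m v}\cap\Omega_{x^{(m)}}$, carrying $(\partial X_\mu\times X_v)\cup(X_\mu\times \partial X_v)$ isomorphically onto $Y\cap \partial X_{\mu\oslash_m v}$; and second, a transversality statement for $X_{\mu\oslash_m v}$ against $\Omega_{x^{(m)}}$ supplying the needed $\mathrm{Tor}$-vanishing. The first is the scheme-theoretic refinement of the cohomological identity treated in \cite[\S8]{infschub}.

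For the first fact, the key observation is that a flag $\E_\bull \in \Fl(\bV)$ lies in $\Omega_{x^{(m)}}$ precisely when it is of the form $\boxplus(E,F_\bull)$ for some $(E,F_\bull) \in \Gr(V)\times \Fl(V)$---that is, when it respects the direct-sum decomposition $\bV = V \oplus V$ in the manner encoded by $x^{(m)}$. This is a straightforward check on the dimension function $k_{x^{(m)}}$. The inverse map is then well defined, and the combinatorial recipe for $\mu\oslash_m v$ translates the conditions cutting out $X_{\mu\oslash_m v}$ into the pair of Schubert conditions $E\in X_\mu$ and $F_\bull \in X_v$; this makes $\boxplus\colon X_\mu\times X_v \to Y$ an isomorphism of schemes. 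The boundary $Y\cap \partial X_{\mu\oslash_m v}$ is identified with $(\partial X_\mu\times X_v)\cup (X_\mu\times \partial X_v)$ by the same dictionary: each codimension-one Schubert subvariety of $X_{\mu\oslash_m v}$ that meets $\Omega_{x^{(m)}}$ properly corresponds to a codimension-one reduction of either $\mu$ or $v$. The first equality in the proposition is then immediate, since a scheme-theoretic isomorphism compatible with effective divisors preserves ideal-sheaf classes under pushforward.

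For the second equality, I would invoke transversality in K-theory. Because $\Omega_{x^{(m)}}$ is $B^-$-stable while $X_{\mu\oslash_m v}$ and $\partial X_{\mu\oslash_m v}$ are $B$-stable, a generic element of $B$ places them in sufficiently general position. Kleiman-type transversality in the form used for K-theory---\cite[Lemma~1]{brion}, together with the refinements exploited in \cite{agm}---yields both the dimension count $\dim Y = \dim X_{\mu\oslash_m v} - \codim \Omega_{x^{(m)}}$ and the vanishing of $\mathrm{Tor}^i\bigl(\O_{X_{\mu\oslash_m v}}(-\partial X_{\mu\oslash_m v}),\,\O_{\Omega_{x^{(m)}}}\bigr)$ for $i>0$. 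Hence the product in $K_T\Fl(\bV)$ is represented by the underived tensor product of sheaves, $\xi_{\mu\oslash_m v}\cdot [\O_{\Omega_{x^{(m)}}}] = [\O_Y(-\partial X_{\mu\oslash_m v}|_Y)]$, agreeing with the middle expression.

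The principal obstacle is the scheme-theoretic analysis of $Y$: one must verify that it is reduced, irreducible of the expected dimension $\dim X_\mu + \dim X_v$, and that $\partial X_{\mu\oslash_m v}|_Y$ is exactly $\partial Y$ with the correct reduced structure on each irreducible component. This is a delicate combinatorial matter depending on the precise shape of $\mu\oslash_m v$ and $x^{(m)}$; the required bookkeeping extends the analysis of \cite[\S8]{infschub} from cohomology---where reducedness at the cycle level suffices---to the K-theoretic setting, where equality of ideal sheaves on the nose is needed for the pushforward and product identities to match term-by-term.
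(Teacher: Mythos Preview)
Your strategy matches what the paper defers to \cite[Lemma~2.2]{thomas-yong} and \cite[Theorem~7.5]{LRS}: identify the image $\boxplus(X_\mu\times X_v)$ scheme-theoretically with the Richardson variety $X_{\mu\oslash_m v}\cap\Omega_{x^{(m)}}$ (carrying the boundary divisor along), and then use Tor-vanishing to equate the middle term with the product of K-classes. The paper's own proof consists only of these two citations, so your proposal is effectively an unpacking of the referenced arguments.

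One correction to your second step: no generic translate is needed, and invoking one would not give what you want. The specific pair $(X_{\mu\oslash_m v},\,\Omega_{x^{(m)}})$ is a Schubert variety and an opposite Schubert variety, and these \emph{always} intersect properly with vanishing higher Tor---this is standard Richardson-variety theory (see \cite{brion}), and it is precisely what \cite[Theorem~7.5]{LRS} packages for the ideal-sheaf version. A generic $B$-translate would only compute the product class $\xi_{\mu\oslash_m v}\cdot[\O_{\Omega_{x^{(m)}}}]$, not identify it with the class of the \emph{particular} intersection appearing in the middle expression; for that you need the Tor-vanishing to hold on the nose, without moving anything. The scheme-theoretic and boundary-divisor bookkeeping you flag as the ``principal obstacle'' is genuine, and is exactly what \cite[Lemma~2.2]{thomas-yong} carries out in the Grassmannian setting.
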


This is proved as in \cite[Lemma~2.2]{thomas-yong}; see also \cite[Theorem~7.5]{LRS}.

Specialize the $z$ variables by setting
\begin{align*}
 \mathbb{z} &= (z_{-m+1},\ldots,z_m,z_{-m+1},\ldots,z_m)
 \intertext{and} 
\tilde{\mathbb{z}} &= (z_{-m+1},\ldots,z_0,z_{-m+1},\ldots,z_m,z_{1},\ldots,z_m).
\end{align*}
Let $\mathbb{c}$ and $\tilde{\mathbb{c}}$ be the specializations of $\prod_{i=-2m+1}^0\frac{1+u_i}{1+\tilde{x}_i}$ to $u=\mathbb{z}$ and $u=\tilde{\mathbb{z}}$, respectively.  Using Proposition~\ref{p.dirsum-push}, the proof of the following is analogous to that of \cite[Corollary~8.10]{infschub}:

\begin{corollary}
The coefficient $\hat{d}_{\mu,v}^w(z)$ is equal to the coefficient of $\enG_{\mu\oslash_m v}(\mathbb{c};x;\mathbb{z})$ in the expansion of $\enG_w(\tilde{\mathbb{c}};x;\tilde{\mathbb{z}})\cdot \enG_{x^{(m)}}(\mathbb{c};x;\mathbb{z})$.
\end{corollary}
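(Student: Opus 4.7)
The plan is to carry out the K-theoretic analogue of the argument used for the cohomology case in \cite[Corollary~8.10]{infschub}. The starting point is Poincar\'e duality in equivariant K-theory: since $\{[\O_{\Omega_\mu}]\}$ and $\{\xi_\mu\}$ are dual bases on $\Gr(V)$ (and similarly on $\Fl(V)$), the coproduct formula \eqref{e.hatd} gives
\[
  \hat{d}_{\mu,v}^w = \chi_T\bigl(\Gr(V)\times\Fl(V),\, \boxplus^*[\O_{\bbOmega_w}] \cdot (\xi_\mu\boxtimes\xi_v)\bigr).
\]
Applying the projection formula for $\boxplus$ and then Proposition~\ref{p.dirsum-push} rewrites this as a single Euler characteristic on $\Fl(\bV)$:
\[
  \hat{d}_{\mu,v}^w = \chi_T\bigl(\Fl(\bV),\, [\O_{\bbOmega_w}]\cdot \xi_{\mu\oslash_m v} \cdot [\O_{\Omega_{x^{(m)}}}]\bigr).
\]
Because $\xi_{\mu\oslash_m v}$ is dual to $[\O_{\bbOmega_{\mu\oslash_m v}}]$ on $\Fl(\bV)$, this Euler characteristic picks out the coefficient of $[\O_{\bbOmega_{\mu\oslash_m v}}]$ in the Schubert-basis expansion of $[\O_{\bbOmega_w}]\cdot[\O_{\Omega_{x^{(m)}}}]$.

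The remaining step is to translate this K-theoretic identity into the advertised polynomial one. By Corollary~\ref{c.double-deg}, any Schubert class $[\O_{\Omega_{w'}}]$ on $\Fl(\bV)$ is represented by a Grothendieck polynomial $\enG_{w'}$, under the substitutions of Proposition~\ref{p.deg} which identify the $c$ variables with Chern classes of the tautological bundle complex and the $z_i$ with K-theoretic weights along the opposite flag $\bV_{>\bull}$. The two factors in the product require different specializations only because $w$ lives in the small window $\Sgp_{(-n,n]}$ and $x^{(m)}$ in the larger window $\Sgp_{(-2m,2m]}$: the natural degeneracy locus presentations of $\bbOmega_w$ and $\Omega_{x^{(m)}}$ therefore use different portions of the flag on $\bV$, yielding the two weight sequences $\tilde{\mathbb{z}}$ and $\mathbb{z}$ and the corresponding Chern-class specializations $\tilde{\mathbb{c}}$ and $\mathbb{c}$. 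With these identifications, $[\O_{\bbOmega_w}]$ is represented by $\enG_w(\tilde{\mathbb{c}};x;\tilde{\mathbb{z}})$, the class $[\O_{\Omega_{x^{(m)}}}]$ by $\enG_{x^{(m)}}(\mathbb{c};x;\mathbb{z})$, and $[\O_{\bbOmega_{\mu\oslash_m v}}]$ by $\enG_{\mu\oslash_m v}(\mathbb{c};x;\mathbb{z})$. Extracting the coefficient of $\enG_{\mu\oslash_m v}(\mathbb{c};x;\mathbb{z})$ in the product thus yields $\hat{d}_{\mu,v}^w(z)$.

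The main obstacle is the bookkeeping of variable specializations. One must verify carefully that the ordering of basis vectors of $\bV$ fixed in \S\ref{s.coprod}, combined with the diagonal $T$-action placing weight $y_i$ on both $\bbe_i$ and $\bbe_{i'}$, produces precisely the sequences $\mathbb{z}$ and $\tilde{\mathbb{z}}$ of the statement (and their Chern-class companions $\mathbb{c}$, $\tilde{\mathbb{c}}$), and that the resulting polynomial representatives agree with those attached to $\bbOmega_{\mu\oslash_m v}$ by its standard degeneracy locus description. Once these identifications are checked, the equality of K-theory coefficients lifts uniquely (in light of Remark~\ref{r.beta} and the basis property of $\{\enG_{w'}\}$ in $\KK^\beta$) to the homogeneous identity in $\hat{R}^\beta$ claimed by the corollary.
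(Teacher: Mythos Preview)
Your approach is correct and follows exactly the route the paper indicates: it is the K-theoretic analogue of \cite[Corollary~8.10]{infschub}, using Poincar\'e duality, the projection formula for $\boxplus$, and Proposition~\ref{p.dirsum-push} to reduce to extracting a Schubert coefficient of a product on $\Fl(\bV)$, then translating via the degeneracy-locus formulas.

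One small correction in your exposition: the reason the two factors carry different specializations $\tilde{\mathbb{z}}$ and $\mathbb{z}$ is not really that $w$ and $x^{(m)}$ sit in windows of different size. Rather, $\bbOmega_w$ is defined via the partial opposite flag $\bV_{>q}=V_{>0}\oplus V_{>q}$ (the footnote ordering: nonpositive unprimed, then primed in order, then positive unprimed), whereas $\Omega_{x^{(m)}}$ and $\bbOmega_{\mu\oslash_m v}$ are Schubert varieties for the full opposite flag coming from the basis ordering fixed in \S\ref{s.coprod}. The sequences $\tilde{\mathbb{z}}$ and $\mathbb{z}$ simply record the $T$-weights along the successive quotients of these two (different) full flags on $\bV$ under the diagonal action; once this is noted, the identifications $[\O_{\bbOmega_w}]\leftrightarrow\enG_w(\tilde{\mathbb{c}};x;\tilde{\mathbb{z}})$ and $[\O_{\Omega_{x^{(m)}}}]\leftrightarrow\enG_{x^{(m)}}(\mathbb{c};x;\mathbb{z})$ follow from Proposition~\ref{p.deg} exactly as in the cohomology case. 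This is the bookkeeping you flagged, and with that clarification your argument goes through.
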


The following examples use this method, implemented in Maple, to compute $\hat{d}_{\mu,v}^w(z)$.  To compare with the notation of \cite{LLS3}, we have $k^w_\mu(a) = \hat{d}^w_{\mu,e}(z)$ under the substitutions $\beta=-1$ and $z_i = \ominus a_i$.  (In particular, $-z_j\ominus z_i$ goes to $-a_i\ominus a_j$.)

\begin{example}
We have
\[
  \hat{d}^{[2,1,0,-1]}_{(2,2),[1,0,-1]} = (\beta)^2(1+\beta z_{0}\ominus z_2)(z_{-1}\ominus z_1).
\]
Evaluating at $\beta=-1$ and multiplying by $(-1)^{|\mu|+\ell(v)-\ell(w)}=(-1)^{4+3-6}=-1$, we have
\[
  -\hat{d}^{[2,1,0,-1]}_{(2,2),[1,0,-1]}  = ( 1 - z_0\ominus z_2 )(-z_{-1}\ominus z_1) = (\ee^{y_1-y_{-1}}-1) + (\ee^{y_2-y_0}-1)(\ee^{y_1-y_{-1}}-1),
\]
which exhibits the expected positivity.
\end{example}

\begin{example}
We have
\begin{align*}
  \hat{d}^{[2,1,0,-1]}_{(2,1),[1,2,0,-1]} &= \beta\Big[(1+\beta z_0\ominus z_1)(1+\beta z_{-1}\ominus z_2) + (1+\beta z_0\ominus z_1)(1+\beta z_{-1}\ominus z_1) \\
  &\qquad + (1+\beta z_0\ominus z_1)(1+\beta z_0\ominus z_2) + \beta(z_0\ominus z_1)(1+\beta z_0\ominus z_1) \Big].
\end{align*}
\end{example}

\begin{example}
We have
\begin{align*}
 \hat{d}^{[0,-1,2,1]}_{(2,1),e} &= \beta + \beta^2 z_{0}\ominus z_1 \\
 \hat{d}^{[0,-1,2,1]}_{(2),e} &= 1 + \beta z_{0}\ominus z_1 \\ 
 \hat{d}^{[0,-1,2,1]}_{(1,1),e} &= 1 + \beta z_{0}\ominus z_1 \\ 
 \hat{d}^{[0,-1,2,1]}_{(1),e} &=  z_{0}\ominus z_1,
\end{align*}
which agrees with \cite[Example~8.20]{LLS3} at $\beta=-1$.
\end{example}

\begin{example}
We have
\begin{align*}
 \hat{d}^{[3,1,2]}_{(2),e} &= 1 + \beta z_{2}\ominus z_1  & \hat{d}^{[2,3,1]}_{(1,1),e} &= 1 + \beta z_{0}\ominus z_2 \\
 \hat{d}^{[2,0,1]}_{(2),e} &= 1 &  \hat{d}^{[1,2,0]}_{(1,1),e} &= 1 + \beta z_{0}\ominus z_1\\ 
 \hat{d}^{[1,-1,0]}_{(2),e} &= 1 + \beta z_{0}\ominus z_1 &  \hat{d}^{[0,1,-1]}_{(1,1),e} &= 1  \\ 
 \hat{d}^{[0,-2,-1]}_{(2),e} &= 1 + \beta z_{-1}\ominus z_1 & \hat{d}^{[-1,0,-2]}_{(1,1),e} &= 1 + \beta z_{0}\ominus z_{-1}.
\end{align*}
The first of these agrees with \cite[Example~8.21]{LLS3} at $\beta=-1$.
\end{example}

\begin{remark}
As noted in \cite[\S8]{infschub}, the direct sum morphism $\boxplus\colon \Gr(V) \times \Fl(V) \to \Fl(\bV)$ is equivariant with respect to the full torus $\mathds{T} = T\times T$, so one can consider the $\mathds{T}$-equivariant class of $Y=\boxplus(X_\mu\times X_v)$.  In particular, writing $y$ and $y'$ for the characters coming from the first and second factors of $\mathds{T}$, and likewise for $z,z'$, one has ``double'' coefficients $\hat{c}_{\mu,v}^w(y,y')$ and $\hat{d}_{\mu,v}^w(z,z')$ coming from expansions
\begin{align*}
 \boxplus^*[\bbOmega_w] &= \sum_{\mu,v} \hat{c}_{\mu,v}^w(y,y') \, [\Omega_\mu]\times [\Omega_v] 
\intertext{and}
 \boxplus^*[\O_{\bbOmega_w}] &= \sum_{\mu,v} \hat{d}_{\mu,v}^w(z,z')\, [\O_{\Omega_\mu}]\times [\O_{\Omega_v}].
\end{align*}
in $H_{\mathds{T}}^*\Fl(\bV)$ and $K_{\mathds{T}}\Fl(\bV)$, respectively.

However, the pair $(A,\B')$ is {\em not} an $S$-factorization of $\B$, for the full torus $S=\mathds{T}$: the coordinates whose nonvanishing is required to obtain a smooth multiplication map are invariant for the diagonal subtorus, but not for all of $\mathds{T}$ (cf.~Example~\ref{ex.s-fact}).  And there is no analogous positivity statement for the coefficients $\hat{c}_{\mu,v}^w(y,y')$ and $\hat{d}_{\mu,v}^w(z,z')$ which specializes to the positivity asserted in Theorem~\ref{t.main2}; this is demonstrated in \cite[Example~8.7]{infschub}.  (The examples given there are $\hat{c}_{(2,2),e}^{[2,3,-1,0,1]}(y,y') = (y'_1-y_2)(y'_1-y_1)$ and $\hat{c}_{(1,1),[0,2,-1,1]}^{[2,3,-1,0,1]}(y,y') = (y'_1-y_1)$.)
\end{remark}

\begin{remark}
In K-theory, one can also define coproduct coefficients $\hat{p}_{\mu,v}^w$ from the expansion of dual classes, so
\begin{align}
 \boxplus^*\bm\xi^w &= \sum_{\mu,v} \hat{p}_{\mu,v}^w\, \xi^\mu\times \xi^v
\end{align}
in $K_T(\Gr(V)\times \Fl(V))$, where the dual classes are $\bm\xi^w = [\O_{\bbOmega_w}(-\partial)]$, etc.  The same argument as above, applying the second statement in Corollary~\ref{c.mainK}, shows that these coefficients satisfy the same positivity:
\[
  (-1)^{|\mu|+\ell(v)-\ell(w)}\hat{p}_{\mu,v}^w \in \Z_{\geq 0}[\ee^{y_i-y_j}-1 : i\prec j].
\]
If, following the suggestion made in \cite[\S11]{LLS3}, one were to define back stable Grothendieck polynomials $\bsG_w^\partial$ representing these ideal-sheaf basis elements $\xi^w$, their coproduct coefficients would satisfy the expected equivariant positivity.
\end{remark}



%

\end{document}